\newtheorem{theorem}{Theorem}[section]
\newtheorem{prop}[theorem]{Proposition}
\newtheorem{lemma}[theorem]{Lemma}
\newtheorem{coro}[theorem]{Corollary}
\newtheorem{prop-def}{Proposition-Definition}[section]
\newtheorem{coro-def}{Corollary-Definition}[section]
\theoremstyle{definition}
\newtheorem{defn}[theorem]{Definition}
\newtheorem{remark}[theorem]{Remark}
\newtheorem{algorithm}[theorem]{Algorithm}
\newcommand{\nc}{\newcommand}
\nc{\tred}[1]{\textcolor{red}{#1}}
\nc{\tblue}[1]{\textcolor{blue}{#1}}
\nc{\tgreen}[1]{\textcolor{green}{#1}}
\nc{\tpurple}[1]{\textcolor{purple}{#1}}
\nc{\btred}[1]{\textcolor{red}{\bf #1}}
\nc{\btblue}[1]{\textcolor{blue}{\bf #1}}
\nc{\btgreen}[1]{\textcolor{green}{\bf #1}}
\nc{\btpurple}[1]{\textcolor{purple}{\bf #1}}
\nc{\NN}{{\mathbb N}}
\nc{\ncsha}{{\mbox{\cyr X}^{\mathrm NC}}} \nc{\ncshao}{{\mbox{\cyr
X}^{\mathrm NC}_0}}
\renewcommand{\frak}{\mathfrak}
\newcommand{\efootnote}[1]{}
\renewcommand{\textbf}[1]{}
\newcommand{\delete}[1]{}
\nc{\mlabel}[1]{\label{#1}}  
\nc{\mcite}[1]{\cite{#1}}  
\nc{\mref}[1]{\ref{#1}}  
\nc{\mbibitem}[1]{\bibitem{#1}} 
\nc{\mlabel}[1]{\label{#1}{\hfill \hspace{1cm}{\bf{{\ }\hfill(#1)}}}}
\nc{\mcite}[1]{\cite{#1}{{\bf{{\ }(#1)}}}}  
\nc{\mref}[1]{\ref{#1}{{\bf{{\ }(#1)}}}}  
\nc{\mbibitem}[1]{\bibitem[\bf #1]{#1}} 
\nc{\tforall}{\quad \text{ for all }}
\nc{\gsb}{Gr\"obner-Shirshov basis\xspace}
\nc{\gsbs}{Gr\"obner-Shirshov bases\xspace}
\nc{\mdl}{\text{dl}}
\nc{\opa}{\ast} \nc{\opb}{\odot} \nc{\op}{\bullet} \nc{\pa}{\frakL}
\nc{\arr}{\rightarrow} \nc{\lu}[1]{(#1)} \nc{\mult}{\mrm{mult}}
\nc{\diff}{\mathfrak{Diff}}
\nc{\opc}{\sharp}\nc{\opd}{\natural}
\nc{\ope}{\circ}
\nc{\dpt}{\mathrm{d}}
\nc{\hck}{H_{RT}}
\nc{\vdf}{\calf}
\nc{\ldf}{\calf_\ell}
\nc{\hlf}{H_\ell}
\nc{\onek}{\mathbf{1}_\bfk}
\nc{\mrba}{matching Rota-Baxter algebra\xspace}
\nc{\Mrba}{Matching Rota-Baxter algebra\xspace}
\nc{\mrbas}{matching Rota-Baxter algebras\xspace}
\nc{\Mrbas}{Matching Rota-Baxter algebras\xspace}
\nc{\match}{matching\xspace}
\nc{\Match}{Matching\xspace}
\nc{\paybe}{polarized associative Yang-Baxter equation\xspace}
\nc{\Paybe}{Polarized associative Yang-Baxter equation\xspace}
\nc{\cpaybe}{PAYBE}
\nc{\rba}{Rota-Baxter algebra\xspace}
\nc{\rbas}{Rota-Baxter algebras\xspace}
\nc{\diam}{alternating\xspace}
\nc{\Diam}{Alternating\xspace}
\nc{\cdiam}{canonical alternating\xspace}
\nc{\Cdiam}{Canonical alternating\xspace}
\nc{\AW}{\mathcal{A}}
\nc{\ari}{\mathrm{ar}}
\nc{\lef}{\mathrm{lef}}
\nc{\Sh}{\mathrm{ST}}
\nc{\Cr}{\mathrm{Cr}}
\nc{\st}{{Schr\"oder tree}\xspace}
\nc{\sts}{{Schr\"oder trees}\xspace}
\nc{\vertset}{\Omega} 
\nc{\assop}{\quad \begin{picture}(5,5)(0,0)
\line(-1,1){10}
\put(-2.2,-2.2){$\bullet$}
\line(0,-1){10}\line(1,1){10}
\end{picture} \quad \smallskip}
\nc{\operator}{\begin{picture}(5,5)(0,0)
\line(0,-1){6}
\put(-2.6,-1.8){$\bullet$}
\line(0,1){9}
\end{picture}}
\nc{\idx}{\begin{picture}(6,6)(-3,-3)
\put(0,0){\line(0,1){6}}
\put(0,0){\line(0,-1){6}}
\end{picture}}
\nc{\pb}{{\mathrm{pb}}}
\nc{\Lf}{{\mathrm{Lf}}}
\nc{\lft}{{left tree}\xspace}
\nc{\lfts}{{left trees}\xspace}
\nc{\fat}{{fundamental averaging tree}\xspace}
\nc{\fats}{{fundamental averaging trees}\xspace}
\nc{\avt}{\mathrm{Avt}}
\nc{\rass}{{\mathit{RAss}}}
\nc{\aass}{{\mathit{AAss}}}
\nc{\vin}{{\mathrm Vin}}    
\nc{\lin}{{\mathrm Lin}}    
\nc{\inv}{\mathrm{I}n}
\nc{\gensp}{V} 
\nc{\genbas}{\mathcal{V}} 
\nc{\bvp}{V_P}     
\nc{\gop}{{\,\omega\,}}     
\nc{\bin}[2]{ (_{\stackrel{\scs{#1}}{\scs{#2}}})}  
\nc{\binc}[2]{ \left (\!\! \begin{array}{c} \scs{#1}\\
    \scs{#2} \end{array}\!\! \right )}  
\nc{\bincc}[2]{  \left ( {\scs{#1} \atop
    \vspace{-1cm}\scs{#2}} \right )}  
\nc{\bs}{\bar{S}} \nc{\cosum}{\sqsubset} \nc{\la}{\longrightarrow}
\nc{\rar}{\rightarrow} \nc{\dar}{\downarrow} \nc{\dprod}{**}
\nc{\dap}[1]{\downarrow \rlap{$\scriptstyle{#1}$}}
\nc{\md}{\mathrm{dth}} \nc{\uap}[1]{\uparrow
\rlap{$\scriptstyle{#1}$}} \nc{\defeq}{\stackrel{\rm def}{=}}
\nc{\disp}[1]{\displaystyle{#1}} \nc{\dotcup}{\
\displaystyle{\bigcup^\bullet}\ } \nc{\gzeta}{\bar{\zeta}}
\nc{\hcm}{\ \hat{,}\ } \nc{\hts}{\hat{\otimes}}
\nc{\barot}{{\otimes}} \nc{\free}[1]{\bar{#1}}
\nc{\uni}[1]{\tilde{#1}} \nc{\hcirc}{\hat{\circ}} \nc{\lleft}{[}
\nc{\lright}{]} \nc{\lc}{\lfloor} \nc{\rc}{\rfloor}
\nc{\curlyl}{\left \{ \begin{array}{c} {} \\ {} \end{array}
    \right .  \!\!\!\!\!\!\!}
\nc{\curlyr}{ \!\!\!\!\!\!\!
    \left . \begin{array}{c} {} \\ {} \end{array}
    \right \} }
\nc{\longmid}{\left | \begin{array}{c} {} \\ {} \end{array}
    \right . \!\!\!\!\!\!\!}
\nc{\onetree}{\bullet} \nc{\ora}[1]{\stackrel{#1}{\rar}}
\nc{\ola}[1]{\stackrel{#1}{\la}}
\nc{\ot}{\otimes} \nc{\mot}{{{\boxtimes\,}}}
\nc{\otm}{\overline{\boxtimes}} \nc{\sprod}{\bullet}
\nc{\scs}[1]{\scriptstyle{#1}} \nc{\mrm}[1]{{\rm #1}}
\nc{\margin}[1]{\marginpar{\rm #1}}   
\nc{\dirlim}{\displaystyle{\lim_{\longrightarrow}}\,}
\nc{\invlim}{\displaystyle{\lim_{\longleftarrow}}\,}
\nc{\mvp}{\vspace{0.3cm}} \nc{\tk}{^{(k)}} \nc{\tp}{^\prime}
\nc{\ttp}{^{\prime\prime}} \nc{\svp}{\vspace{2cm}}
\nc{\vp}{\vspace{8cm}} \nc{\proofbegin}{\noindent{\bf Proof: }}
\nc{\proofend}{$\blacksquare$ \vspace{0.3cm}}
\nc{\modg}[1]{\!<\!\!{#1}\!\!>}
\nc{\intg}[1]{F_C(#1)} \nc{\lmodg}{\!
<\!\!} \nc{\rmodg}{\!\!>\!}
\nc{\cpi}{\widehat{\Pi}}
\nc{\sha}{{\mbox{\cyr X}}}  
\newfont{\scyr}{wncyr10 scaled 550}
\nc{\ssha}{\mbox{\bf \scyr X}}
\nc{\shap}{{\mbox{\cyrs X}}} 
\nc{\shpr}{\diamond}    
\nc{\shp}{\ast} \nc{\shplus}{\shpr^+}
\nc{\shprc}{\shpr_c}    
\nc{\msh}{\ast} \nc{\zprod}{m_0} \nc{\oprod}{m_1}
\nc{\vep}{\epsilon} \nc{\labs}{\mid\!} \nc{\rabs}{\!\mid}
\nc{\sqmon}[1]{\langle #1\rangle}
\nc{\mmbox}[1]{\mbox{\ #1\ }} \nc{\dep}{\mrm{dep}} \nc{\fp}{\mrm{FP}}
\nc{\rchar}{\mrm{char}} \nc{\End}{\mrm{End}} \nc{\Fil}{\mrm{Fil}}
\nc{\Mor}{Mor\xspace} \nc{\gmzvs}{gMZV\xspace}
\nc{\gmzv}{gMZV\xspace} \nc{\mzv}{MZV\xspace}
\nc{\mzvs}{MZVs\xspace} \nc{\Hom}{\mrm{Hom}} \nc{\id}{\mrm{id}}
\nc{\im}{\mrm{im}} \nc{\incl}{\mrm{incl}} \nc{\map}{\mrm{Map}}
\nc{\mchar}{\rm char} \nc{\nz}{\rm NZ} \nc{\supp}{\mathrm Supp}
\nc{\Alg}{\mathbf{Alg}} \nc{\Bax}{\mathbf{Bax}} \nc{\bff}{\mathbf f}
\nc{\bfk}{{\bf k}} \nc{\bfone}{{\bf 1}} \nc{\bfx}{\mathbf x}
\nc{\bfy}{\mathbf y}
\nc{\base}[1]{\bfone^{\otimes ({#1}+1)}} 
\nc{\Cat}{\mathbf{Cat}}
\nc{\detail}{\marginpar{\bf More detail}
    \noindent{\bf Need more detail!}
    \svp}
\nc{\Int}{\mathbf{Int}} \nc{\Mon}{\mathbf{Mon}}
\nc{\rbtm}{{shuffle }} \nc{\rbto}{{Rota-Baxter }}
\nc{\remarks}{\noindent{\bf Remarks: }} \nc{\Rings}{\mathbf{Rings}}
\nc{\Sets}{\mathbf{Sets}} \nc{\wtot}{\widetilde{\odot}}
\nc{\wast}{\widetilde{\ast}} \nc{\bodot}{\bar{\odot}}
\nc{\bast}{\bar{\ast}} \nc{\hodot}[1]{\odot^{#1}}
\nc{\hast}[1]{\ast^{#1}} \nc{\mal}{\mathcal{O}}
\nc{\tet}{\tilde{\ast}} \nc{\teot}{\tilde{\odot}}
\nc{\oex}{\overline{x}} \nc{\oey}{\overline{y}}
\nc{\oez}{\overline{z}} \nc{\oef}{\overline{f}}
\nc{\oea}{\overline{a}} \nc{\oeb}{\overline{b}}
\nc{\weast}[1]{\widetilde{\ast}^{#1}}
\nc{\weodot}[1]{\widetilde{\odot}^{#1}} \nc{\hstar}[1]{\star^{#1}}
\nc{\lae}{\langle} \nc{\rae}{\rangle}
\nc{\lf}{\lfloor}
\nc{\rf}{\rfloor}
\nc{\QQ}{{\mathbb Q}}
\nc{\RR}{{\mathbb R}} \nc{\ZZ}{{\mathbb Z}}
\nc{\cala}{{\mathcal A}} \nc{\calb}{{\mathcal B}}
\nc{\calc}{{\mathcal C}}
\nc{\cald}{{\mathcal D}} \nc{\cale}{{\mathcal E}}
\nc{\calf}{{\mathcal F}} \nc{\calg}{{\mathcal G}}
\nc{\calh}{{\mathcal H}} \nc{\cali}{{\mathcal I}}
\nc{\call}{{\mathcal L}} \nc{\calm}{{\mathcal M}}
\nc{\caln}{{\mathcal N}} \nc{\calo}{{\mathcal O}}
\nc{\calp}{{\mathcal P}} \nc{\calr}{{\mathcal R}}
\nc{\cals}{{\mathcal S}} \nc{\calt}{{\mathcal T}}
\nc{\calu}{{\mathcal U}} \nc{\calw}{{\mathcal W}} \nc{\calk}{{\mathcal K}}
\nc{\calx}{{\mathcal X}} \nc{\CA}{\mathcal{A}}
\nc{\fraka}{{\mathfrak a}} \nc{\frakA}{{\mathfrak A}}
\nc{\frakb}{{\mathfrak b}} \nc{\frakB}{{\mathfrak B}}
\nc{\frakc}{{\mathfrak c}}
\nc{\frakD}{{\mathfrak D}} \nc{\frakF}{\mathfrak{F}}
\nc{\frakf}{{\mathfrak f}} \nc{\frakg}{{\mathfrak g}}
\nc{\frakH}{{\mathfrak H}} \nc{\frakL}{{\mathfrak L}}
\nc{\frakM}{{\mathfrak M}} \nc{\bfrakM}{\overline{\frakM}}
\nc{\frakm}{{\mathfrak m}} \nc{\frakP}{{\mathfrak P}}
\nc{\frakN}{{\mathfrak N}} \nc{\frakp}{{\mathfrak p}}
\nc{\frakS}{{\mathfrak S}} \nc{\frakT}{\mathfrak{T}}
\nc{\frakX}{{\mathfrak X}}
\nc{\BS}{\mathbb{S
}}
\font\cyr=wncyr10 \font\cyrs=wncyr7
\nc{\ID}{{\rm I}}\nc{\lbar}[1]{\overline{#1}}\nc{\bre}{{\rm bre}}
\nc{\sd}{\cals}\nc{\rb}{\rm RB}\nc{\A}{\rm A}\nc{\LL}{\rm L}\nc{\tx}{\tilde{X}}
\nc{\col}{\Delta_{RT}}\nc{\mul}{m_{RT}}\nc{\ul}{u_{RT}}\nc{\epl}{\varepsilon_{RT}}
\nc{\hl}{H_{RT}}\nc{\arro}[1]{#1}\nc{\px}{P_{\tx}}\nc{\pw}{P_{\mathfrak{w}}}\nc{\pl}{B_\omega^+}
\nc{\pp}{\pl}\nc{\ppp}[1]{B^+(#1)}\nc{\dw}{\diamond_{\mathfrak{w}}}\nc{\dl}{\diamond_{\rm \ell}}
\nc{\ncshaw}{\sha^{{\rm NC}}_{\Omega}}\nc{\ncshal}{\sha^{{\rm NC}}_{{\rm RT}}}
\nc{\ver}{\rm V}\nc{\ld}{l}\nc{\del}{\Delta_{{\rm \ell}}}\nc{\epsl}{\epsilon_{{\rm \ell}}}
\nc{\uul}{u_{{\rm \ell}}}\nc{\oneh}{\mathbf{1}}\nc{\onew}{\mathbf{1}}
\nc{\etree}{1} \nc{\conc}{m_{RT}} \nc{\mpu}{u^{\ast}} \nc{\mpv}{v^{\ast}}
\nc{\brep}{\text{bre}_{P}} \nc{\leqo}{\leq_{\text{db}}} \nc{\odb}{<_{\text{db}}}
\nc{\hrtb}{H_{RT}(X\sqcup\Omega)} \nc{\hrts}{H_{RT}(X, \Omega)}\nc{\rts}{\mathcal{T}(X, \Omega)}\nc{\rfs}{\mathcal{F}(X, \Omega)} \nc{\ncshall}{\sha^{{\rm NC}}_{{\rm RT}}} \nc{\ldl}{\leq_{\mathrm{db}}} \nc{\pla}{B_{\alpha}^{+}} \nc{\plb}{B_{\beta}^{+}}
\nc{\bim}[1]{#1}  \nc{\shaop}{\sha_{\Omega}^{+}}  \nc{\shao}{\sha_{\Omega}}
\nc{\bbim}[2]{#1 #2} \nc{\bbbim}[2]{#1,\, #2} \nc{\RBF}{{\rm RBF}}
\nc{\frbf}{F_{\RBF}} \nc{\shaf}{\ssha_{\tiny{\Omega}}} \nc{\sham}{\diamond_{\Omega}}
\nc{\dnx}{\Delta_n A} \nc{\dx}{\Delta A} \nc{\dgp}{{\rm deg_{P}}}
\nc{\dgt}{{\rm deg_{T}}} \nc{\dg}{{\rm deg}} \nc{\ida}{ID($A$)} \nc{\tu}{\tilde{u}} \nc{\tv}{\tilde{v}}
 \nc{\fbase}{\calb} \nc{\LF}{\mathrm{RF}} \nc{\FFA}{\mathrm{LF}} \nc{\irr}{\mathrm{Irr}}
 \nc{\result}{\bfk\mathrm{Irr}(S_n)}  \nc{\I}{I_{\mathrm{ID},n}^0}
 \nc{\nrs}{\calr_n^\star} \nc{\ii}{\mathrm{I}} \nc{\iii}{\mathrm{II}}
\nc{\intl}{{\rm int}}\nc{\ws}[1]{{#1}}\nc{\deleted}[1]{\delete{#1}}\nc{\plas}{placements\xspace}
\nc{\Id}{\mathrm{Id}} \nc{\Irr}{\mathrm{Irr}}
\nc{\tos}{totally ordered set }
\nc{\nes}{nonempty set}
\nc{\Po}{(P_\omega)_{\omega\in \Omega}}
\nc{\Pop}{(P'_\omega)_{\omega\in \Omega}}
\nc{\Bo}{(B_{\omega}^+)_{\omega\in \Omega}}
\newcommand{\tdun}[1]
{\begin{picture}(10,5)(-2,-1)
\put(0,0){\circle*{2}}
\put(3,-2){\tiny #1}
\end{picture}}
\newcommand{\tddeux}[2]{\begin{picture}(12,5)(0,-1)
\put(3,0){\circle*{2}}
\put(3,0){\line(0,1){5}}
\put(3,5){\circle*{2}}
\put(6,-3){\tiny #1}
\put(6,3){\tiny #2}
\end{picture}}
\newcommand{\tdtroisun}[3]{\begin{picture}(20,12)(-5,-1)
\put(3,0){\circle*{2}}
\put(-0.65,0){$\vee$}
\put(6,7){\circle*{2}}
\put(0,7){\circle*{2}}
\put(5,-2){\tiny #1}
\put(8,5){\tiny #2}
\put(-6,5){\tiny #3}
\end{picture}}
\newcommand{\tdquatretrois}[4]{\begin{picture}(20,20)(-5,-1)
\put(3,0){\circle*{2}}
\put(-.65,0){$\vee$}
\put(6,7){\circle*{2}}
\put(0,7){\circle*{2}}
\put(6,14){\circle*{2}}
\put(6,7){\line(0,1){7}}
\put(5,-2){\tiny #1}
\put(8,5){\tiny #2}
\put(-6,5){\tiny #4}
\put(8,12){\tiny #3}
\end{picture}}
\definecolor{red}{rgb}{1.,0.,0.}
\definecolor{green}{rgb}{0.,1.,0.}
\definecolor{blue}{rgb}{0.,0.,1.}
\begin{document}

\title[Hopf algebras of rooted forests and free matching Rota-Baxter algebras]{Hopf algebra of multi-decorated rooted forests, free matching Rota-Baxter algebras and Gr\"obner-Shirshov bases}
%
\author{Xing Gao}
\address{School of Mathematics and Statistics,
Key Laboratory of Applied Mathematics and Complex Systems,
Lanzhou University, Lanzhou, Gansu 730000, P.\,R. China}
\email{gaoxing@lzu.edu.cn}

\author{Li Guo}
\address{Department of Mathematics and Computer Science, Rutgers University, Newark, NJ 07102, USA}
\email{liguo@rutgers.edu}

\author{Yi Zhang}
\address{School of Mathematics and Statistics,
Nanjing University of Information Science \& Technology, Nanjing, Jiangsu 210044, P.\,R. China}
\email{zhangy2016@nuist.edu.cn}


\date{\today}
\begin{abstract}

Recent advances in stochastic PDEs, Hopf algebras of typed trees and integral equations have inspired the study of algebraic structures with replicating operations. To understand their algebraic and combinatorial nature, we first use rooted forests with multiple decoration sets to construct free Hopf algebras with multiple Hochschild 1-cocycle conditions. Applying the universal property of the underlying operated algebras and the method of \gsbs, we then construct free objects in the category of matching Rota-Baxter algebras which is a generalization of Rota-Baxter algebras to allow multiple Rota-Baxter operators. Finally the free matching Rota-Baxter algebras are equipped with a cocycle Hopf algebra structure.
\end{abstract}

\subjclass[2010]{
16T30,	
17B38,   
05E16,   
16Z10, 
16T10, 
05C05,   
16W99, 
16S10, 
05A05   
}

\keywords{Rooted forest; Hopf algebra; Rota-Baxter algebra; 1-cocycle condition; Gr\"{o}bner-Shirshov bases}

\maketitle

\vspace{-1cm}

\tableofcontents

\vspace{-1cm}
\setcounter{section}{0}

\allowdisplaybreaks

\section{Introduction}

This paper applies rooted forests to obtain free objects for  generalized cocycle Hopf algebras and then for matching Rota-Baxter algebras. To pass from the former to the latter, we interpret rooted forests as bracketed words and utilize the method of Gr\"obner-Shirshov bases.

\subsection{Rooted tree Hopf algebras and Rota-Baxter algebras}

The Hopf algebra of rooted forests arose from the study of Connes and Kreimer~\mcite{CK98,Kr98} on renormalization of quantum field theory where the rooted forests serves as a baby model of Feynman diagrams. The importance of this Hopf algebra and its noncommutative analog, the Foissy-Holtkamp Hopf algebra~\mcite{Foi02,Hol03}, has been investigated from various points of view. From the algebraic viewpoint, this importance is revealed by its universal property in the category of cocycle Hopf algebras~\mcite{Foi02,KP,Moe01}, in terms of a Hopf algebra with a grafting operator tied together by the Hochschild 1-cocycle condition. It is also characterized as the free operated algebra~\mcite{Guo09,ZGG16}, namely an algebra equipped with a linear operator, and thus are interpreted in terms of bracketed words and Motzkin paths.
Recently such a universal property was generalized to braided Hopf algebras of rooted forests \`a la Connes-Kreimer~\mcite{Foi08,GL19}.

Another algebraic structure playing a key role in the work of Connes and Kreimer, and in the context of operated algebras, is the Rota-Baxter algebra which has its origin in the work of G. Baxter~\mcite{Bax60} in fluctuation theory in probability. Connections of Rota-Baxter algebras have been established with broad areas in mathematics and mathematical physics, including quasi-symmetric functions, operads, integrable system and renormalization methods. See for example~\mcite{Ag,BGN,GPZ,YGT}.

Free Rota-Baxter algebras can also be realized on rooted forests~\mcite{EG08,ZGG16}. In fact, the realization can be obtained from certain generalization of the noncommutative Connes-Kreimer Hopf algebra. This connection of free Rota-Baxter algebras with generalized Connes-Kreimer Hopf algebras not only highlights the combinatorial nature of Rota-Baxter algebras which attracted the attentions of outstanding combinatorists such as Cartier and Rota~\mcite{Ca,Ro}, but also provides a natural Hopf algebra structure on free Rota-Baxter algebras from that on the rooted forests.

Further various Hochschild 1-cocycle conditions have been applied to establish or characterize other Hopf like algebraic structures, such as Hopf algebras on free commutative modified Rota-Baxter algebras~\mcite{ZGG19}, left counital Hopf algebras on  free (commutative) Nijenhuis algebras~\mcite{GLZ18, ZG17} and free Rota-Baxter systems~\mcite{PZGL, QC18}, as well as the Loday-Ronco Hopf algebra of binary rooted trees~\mcite{ZG18} and infinitesimal bialgebras of rooted forests~\mcite{ZCGL18}.

\subsection{Matching Rota-Baxter algebras and outline of the paper}

As a multi-operator generalization of the Rota-Baxter algebra, the recent notion of a \mrba~\cite{GGZ19} has its motivation from the study of multiple pre-Lie algebras~\mcite{Foi18} originated in the important work of Bruned, Hairer and Zambotti~\mcite{BHZ} on algebraic renormalization of regularity structures and further motivated by the studies of associative Yang-Baxter equations, Volterra integral equations and linear structure of Rota-Baxter operators~\mcite{GGZy,GGL}. See Section~\mref{ss:mrba} for a summary of the broad connections of \mrbas.

Our purpose of this paper is to construct free \mrbas from rooted forests by generalizing the construction of free Rota-Baxter algebras mentioned above from the cocycle Hopf algebra of rooted forests, from one operator to multiple operators. Thus we first introduce a class of decorated rooted forests which will serve as the carrier of the free Hopf algebra with multiple Hochschild 1-cocycle conditions. Free \match \rbas will be a quotient of this free cocycle Hopf algebra modulo the operated ideal generated by the \match \rba relations. We next display a basis of this quotient for which we apply the method of \gsbs. As this method works better with the algebraic notion of bracketed words, we utilize the dictionary between rooted forests and bracketed words provided in~\mcite{Guo09} which allows us to take advantage of both the combinatorial structure of rooted forests for their ease description of the coproduct and the algebraic structure of bracketed words for their amenability for detailed computations.

In the process, we also take advantage of the two gradings and filtrations  on rooted forests (and the corresponding bracketed words), one by the number of vertices and one by the depths of the forests.

To provide more details, our first step is to construct free multiple cocycle Hopf algebras from rooted forests Section~\mref{sec:oophopf}.
Thus we will need to work with a set $X$ of generators and a set $\Omega$ of operators that satisfy the cocycle condition. For this purpose, we introduce rooted forests with two decoration sets $X$ and $\Omega$, with $X$ only allowed to decorate the leaf vertices. We then show that the resulting space $\hrts$ meets our needs for the free $\Omega$-cocycle Hopf algebra on $X$ (Theorem~\mref{thm:propm}). Then the free \match \rba on $X$ is the quotient of $\hrts$ modulo the \match \rba relation. In order to apply the method of \gsbs to give an explicit construction of the free \match \rba in the next section, we utilize the one-one correspondence between rooted forests and bracketed words~\mcite{Guo09} to rephrase Theorem~\mref{thm:propm} in terms of (multiple) bracketed words (Corollary~\mref{co:wfil}).

Our goal in Section~\mref{sec:freemrb} is to give an explicit construction of free matching Rota-Baxter algebras by applying the method of Gr\"obner-Shirshov bases to obtain a canonical linear basis of the quotient of the free $\Omega$-cocycle Hopf algebra $\hrts$ (reinterpreted in terms of bracketed words) modulo the \match \rba relations. First the notion of \match \rbas is recalled together with a list of their properties. Then with a suitable monomial order, it is established in Theorem~\mref{thm:GS} that the \match \rba relations form a \gsb and thus give rise to a linear basis of the free \match \rba as the aforementioned quotient. The operations of the \match \rba are given in terms of this linear basis. In view of establishing a cocycle Hopf algebra structure on the free \match \rba in the next section, we apply the one-one correspondence between rooted forests and bracketed words again and rephrase the free \match \rba in terms of decorated rooted forests.

In Section~\mref{sec:hopf}, we equip the free \mrba with an $\Omega$-cocycle Hopf algebra structure descending from the one on $\hrts$, by first establishing an $\Omega$-cocycle bialgebra structure and then verifying the needed connected condition in order to obtain the Hopf algebra structure in Theorem~\mref{thm:cohopf}.

\smallskip

\noindent
{\bf Notations.}
Throughout this paper, we fix a unitary commutative ring $\bfk$
which will be the base ring of all modules, algebras, coalgebras, bialgebras, tensor products, as well as linear maps. By an algebra, we mean a unitary associative algebra unless otherwise specified.

\section{Multi-operated Hopf algebras of decorated rooted forests}
\mlabel{sec:oophopf}
In this section, we construct free operated algebras with multiple operators by decorated rooted forests, as well as by bracketed words.

The rooted forests we consider have different decorations on their leafs and (internal) vertices, but can still be obtained as a suitable subset of the classical noncommutative Connes-Kreimer Hopf algebra $\hck(\Omega)$, that is, Foissy-Holtkamp Hopf algebra~\mcite{Foi02,Hol03}, of rooted forests with all their vertices (leafs and internal vertices) decorated by the same set. Thus we first recall the notions for $\hck(\Omega)$ for later applications.

\subsection{Noncommutative Connes-Kreimer Hopf algebras}
\mlabel{ss:ck}
Various Hopf algebras of decorated planar rooted trees and forests are commonly studied in combinatorics, algebra and other fields. We recall the needed notions and results to be applied in our constructions in this paper.

A $\mathbf{rooted\ tree}$ is a connected and simply connected set of vertices and oriented edges such that there is precisely one distinguished vertex, called the $\mathbf{root}$. A $\mathbf{planar\ rooted\ tree}$  is a rooted tree with a fixed embedding into the plane.

Let $\calt$ denote the set of planar rooted trees and $\calf$ the set of {\bf planar rooted forests}, expressed algebraically as the free monoid $\calf:=M(\calt)$ generated by $\calt$ with the concatenation product $\mul$ which is usually suppressed for brevity. The {\bf empty tree} in $\calf$ is denoted by $1$, the unit of $M(\calt)$.
Thus a planar rooted forest is a noncommutative concatenation of planar rooted trees, denoted by $F=T_1\cdots T_n$ with $T_1, \ldots, T_n \in \calt$, with the convention that $F=1$ when $n=0$.

Given a nonempty set $\Omega$, let $\calt(\Omega)$ (resp. $\calf(\Omega):=M(\calt(\Omega))$) denote the set of planar rooted trees (resp. forests) whose vertices, including the leaves and internal vertices, are decorated by elements of $\Omega$. Define the free $\bfk$-module spanned by the set $\vdf(\Omega)$:
\begin{align*}
\hck(\Omega):= \bfk \vdf(\Omega)=\bfk M(\calt(\Omega)) =\bfk\langle \calt(\Omega)\rangle,
\end{align*}
which is also the noncommutative polynomial algebra on the set $\calt(\Omega)$ with the concatenation.

The noncommutative Connes-Kreimer Hopf algebra $\hck(\Omega)$ introduced by Foissy~\mcite{Foi02} and Holtkamp~\mcite{Hol03} is the above algebra equipped with a coproduct which can be defined in several ways, by subforests, by admissible cuts and by a 1-cocycle condition. A {\bf subforest} of a planar rooted forest $F\in\calf(\Omega)$ is the forest
consisting of a set of vertices of $F$ together with their descendants and edges connecting all these vertices.
Let $\calf_{F}$ be the set of subforests of $F$, including the empty tree 1 and the full subforest $F$.
Define
\begin{equation}
\Delta_{RT} (F):=\sum_{G\in \calf_{F}}G\otimes(F/G),
\mlabel{eq:cop1}
\end{equation}
where $F/G$ is obtained by removing the subforest $G$ and edges connecting $G$ to the rest of the tree~\mcite{Gub}. Here we use the convention that $F/G=1$ when $F=G$, and $F/G=F$ when $G=1$.
See~\mcite{CK98,Foi02} for a description of the coproduct by admissible cuts.

The 1-cocycle condition that characterizes the coproduct $\Delta_{RT}$ is given by the {\bf grafting operators}. For $\omega\in \Omega$, define
$$B^+_\omega:\hck(\Omega)\to \hck(\Omega)$$
to be the linear grafting operation by sending a rooted forest in $\calf(\Omega)$ to its grafting with the new root decorated by $\omega$ and sending $1$ to $\bullet_\omega$.

Then a recursive description of
$\Delta_{RT}$ is the {\bf 1-cocycle condition} for $T\in \calt$
\begin{equation}
\Delta_{RT} B^+_\omega (T):= B^+_\omega(T) \otimes 1+ (\id\otimes B^+_\omega)\col(T)
\mlabel{eq:cocycle}
\end{equation}
with the convention $\Delta_{RT} (1)=1\ot 1$. In particular, we have
\begin{align}
\Delta_{RT} (\bullet_\omega)=\bullet_\omega \ot 1+1\ot \bullet_\omega,\,  \omega\in \Omega. \mlabel{eq:pri}
\end{align}
For a decorated rooted forest $F=T_1\cdots T_m\in \calf(\Omega)$ with $m\geq 2$, we have
\begin{equation}
\Delta_{RT} (F)=\col (T_{1})\cdots\col (T_{m}).
\mlabel{eq:cop2}
\end{equation}

Also define $\epl:\bfk\vdf(\Omega) \to \bfk$ by taking $\epl(F) = 0$ for $F\in \calt(\Omega)$,
and $\epl(1) = 1$.
Let $\ul: {\bfk}\rightarrow \hck(\Omega)$  be the linear map given by $1_{\bfk}\mapsto 1$.

Recall~\mcite{GG19, Gub,Man01} that a bialgebra $(H,m,u,\Delta,\varepsilon)$ is called {\bf graded} if there are {\bfk}-submodules $H^{(n)}, n\geq0$, of $H$ such that
$$H=\bigoplus\limits^{\infty}_{n\geq0}H^{(n)}, \quad H^{(p)}H^{(q)}\subseteq H^{(p+q)}, \quad \Delta(H^{(n)})\subseteq\bigoplus\limits^{}_{p+q=n}H^{(p)}\otimes H^{(q)}, \quad  n, p, q\geq0.$$
The bialgebra $H$ is called {\bf connected graded} if in addition $H^{(0)}=\mathrm{im} u~(={\bfk})$ and $\mathrm{ker}~ \varepsilon=\bigoplus_{n\geq 1}H^{(n)}$.
It is well known that a connected graded bialgebra is a Hopf algebra.

\begin{theorem}\mcite{Foi02, Hol03}
With the degree of a rooted forest defined by its number of vertices, the quintuple $(\hck(\Omega), \mul, \ul, \Delta_{RT} , \epl )$ is a connected graded bialgebra and hence a Hopf algebra.
\mlabel{thm:rt}
\end{theorem}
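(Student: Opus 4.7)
The plan is to establish the five bialgebra axioms (associativity and unit for $\mul, \ul$; coassociativity and counit for $\col, \epl$; and their mutual compatibility), then verify the connected grading, and finally invoke the standard result that a connected graded bialgebra admits a recursively defined antipode.

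The algebra axioms for $(\hck(\Omega), \mul, \ul)$ hold by construction, since $\hck(\Omega)$ is by definition the noncommutative polynomial algebra $\bfk\langle \calt(\Omega) \rangle$. Multiplicativity of $\col$ and $\epl$ is essentially forced by~(\mref{eq:cop2}) together with the definition of $\epl$: a subforest of a concatenation $F_{1}F_{2}$ is canonically a concatenation of a subforest of $F_{1}$ with a subforest of $F_{2}$, giving $\col(F_{1}F_{2}) = \col(F_{1})\col(F_{2})$. The counit property is then immediate from~(\mref{eq:cop1}): the only surviving term under $\epl \otimes \id$ is the one with $G = 1$, and under $\id \otimes \epl$ the one with $G = F$.

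The main technical step is coassociativity of $\col$. By multiplicativity it suffices to check it on a single decorated rooted tree $T$, and I would proceed by induction on the number of vertices. If $T = \bullet_{\omega}$, then~(\mref{eq:pri}) makes both $(\col \otimes \id)\col(T)$ and $(\id \otimes \col)\col(T)$ equal to $\bullet_{\omega} \otimes 1 \otimes 1 + 1 \otimes \bullet_{\omega} \otimes 1 + 1 \otimes 1 \otimes \bullet_{\omega}$. For the inductive step, write $T = B_{\omega}^{+}(F)$ with $F$ a forest of strictly fewer vertices, and apply the 1-cocycle condition~(\mref{eq:cocycle}) to expand both iterated coproducts. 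After cancelling the contributions coming from $T \otimes 1$, both sides reduce to applying $\id \otimes \id \otimes B_{\omega}^{+}$ to $(\col \otimes \id)\col(F)$ and to $(\id \otimes \col)\col(F)$ respectively, which are equal by the inductive hypothesis. The main obstacle here is keeping the three tensor slots straight when~(\mref{eq:cocycle}) is applied twice, but there is no conceptual difficulty once this bookkeeping is done.

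Finally, the grading and connectedness are immediate. Letting $\hck(\Omega)^{(n)}$ be the span of forests with exactly $n$ vertices, concatenation adds vertex counts, while a subforest $G$ of $F$ with $p$ vertices yields a quotient $F/G$ with $n-p$ vertices, so~(\mref{eq:cop1}) lands in $\bigoplus_{p+q=n} \hck(\Omega)^{(p)} \otimes \hck(\Omega)^{(q)}$. The only forest with $0$ vertices is the empty forest $1$, so $\hck(\Omega)^{(0)} = \bfk$ and the bialgebra is connected graded. The antipode then exists and is unique via the standard recursive formula for a connected graded bialgebra, completing the proof.
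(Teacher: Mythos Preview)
The paper does not supply its own proof of this theorem: it is stated with the citation \mcite{Foi02, Hol03} and treated as an established result from the literature. Your sketch is a correct reconstruction of the standard argument, and in fact the paper later reuses exactly your inductive strategy (via the cocycle condition~(\mref{eq:cocycle})) when proving coassociativity for the quotient structure in Theorem~\mref{them:cocbia}. One small expository point: in the inductive step for coassociativity, after applying~(\mref{eq:cocycle}) twice you get not only the $T\otimes 1\otimes 1$ term but also a common term $((\id\otimes B_\omega^+)\col(F))\otimes 1$ on both sides; once that is also cancelled, the remainder is indeed $(\id\otimes\id\otimes B_\omega^+)$ applied to the two iterated coproducts of $F$, as you claim.
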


\subsection{Multi-decorated planar rooted trees and forests}

We now give a generalization of the Hopf algebra $H_{RT}(\Omega)$ of decorated rooted forests by allowing the leaf vertices and internal vertices decorated by different sets. We then show that it gives a realization of the free object in the category of algebras with multiple operators, thus automatically equipping the free object with a Hopf algebra structure given by a cocycle condition.

Let $X$ be a set and let $\Omega$ be a nonempty set disjoint from $X$. Replacing $\Omega$ by $X \sqcup \Omega $ in $\hck(\Omega)$, we obtain the Hopf algebra $\hrtb=\bfk \calf(X\sqcup \Omega)$ as in Theorem~\mref{thm:rt}.

Let $\rts$ (resp.~$\rfs$) denote the subset of $\calt(X\sqcup \Omega)$ (resp.~$\calf(X\sqcup \Omega)$) consisting of vertex decorated planar rooted trees (resp. forests) with the property that elements of only $\Omega$ can  decorate the internal vertices, namely vertices which are not leafs. The unique vertex of the tree $\bullet$ is regarded as a leaf vertex. In other words, elements of $X$ can only be used to decorate the leaf vertices. Of course, some of the leaf vertices can also be decorated by elements from $\Omega$.
For example,
$$\etree, \tdun{$\alpha$},\ \, \tdun{$x$},\ \, \tddeux{$\alpha$}{$\beta$},\ \,  \tddeux{$\alpha$}{$x$}, \ \, \tdtroisun{$\alpha$}{$\beta$}{$\gamma$},\ \,\tdtroisun{$\alpha$}{$x$}{$\gamma$}, \ \,\tdtroisun{$\alpha$}{$x$}{$y$}, \ \, \tdquatretrois{$\alpha$}{$\beta$}{$\gamma$}{$\beta$},\ \, \tdquatretrois{$\alpha$}{$\beta$}{$\gamma$}{$x$}, \ \, \tdquatretrois{$\alpha$}{$\beta$}{$x$}{$y$},\quad x,y,\in X, \alpha,\beta,\gamma\in \Omega,$$
are in $\rts$ whereas, the following are not in $\rts$:
$$\tddeux{$x$}{$\alpha$},\ \,  \tddeux{$x$}{$y$}, \ \, \tdtroisun{$x$}{$\beta$}{$\alpha$}, \ \, \tdquatretrois{$\alpha$}{$x$}{$\gamma$}{$\beta$}, \quad x, y \in X, \alpha,\beta,\gamma\in \Omega. $$

\begin{remark}
Now we give some special cases of our decorated planar rooted forests.
\begin{enumerate}
\item If $X = \emptyset$, then $\mathcal{F}(X, \Omega)=\calf(\Omega)$ is the linear basis in the decorated Foissy-Holtkamp Hopf algebra $\hck(\Omega)$~\mcite{Foi02}. \mlabel{it:2ex}

\item If $\Omega$ is a singleton, then $\rfs$ was introduced and studied in~\mcite{ZGG16} to construct a cocycle Hopf algebra on decorated planar rooted forests.
\item The subset of $\rfs$ consisting of rooted forests whose (all) leaf vertices are decorated by elements of $X$ and whose internal vertices are decorated by elements of $\Omega$, are introduced in~\mcite{Guo09} to construct free operated {\em nonunitary} semigroups and free operated nonunitary algebras.
\end{enumerate}
\mlabel{re:3ex}
\end{remark}

Define
\begin{align*}
\hrts:= \bfk \rfs=\bfk M(\rts)
\end{align*}
to be the free $\bfk$-module spanned by $\rfs$.

We define the degree $\deg(F)$ of $F\in \calf(X,\Omega)$ to be its number of vertices. For $n\geq 0$, let $\calf^{(n)}$ denote the set of $F\in \calf(X,\Omega)$ with degree $n$ and let $\hrts^{(n)}:=H_{RT}^{(n)}:=\bfk \calf^{(n)}$.
For $F=T_1\cdots T_{k}\in \rfs$ with $k\geq 0$ and $T_1,\cdots,T_{k}\in \rts$, define $\bre(F):=k$ to be the {\bf breadth} of $F$ with the convention that $\bre(\etree) = 0$ when $k=0$.

\begin{theorem}
The quintuple $(\hrts,\,\mul,\,1,\,\Delta_{RT},\,\epl)$ is a connected graded subbialgebra of $\hrtb$ with the grading $\hrts=\oplus_{n\geq0}H_{RT}^{(n)}$ and hence is a Hopf algebra.
\mlabel{thm:hopfx}
\end{theorem}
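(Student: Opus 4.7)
The plan is to leverage Theorem~\mref{thm:rt}, which already provides the connected graded Hopf algebra structure on the ambient space $\hrtb$. Since $\hrts$ is by construction the $\bfk$-submodule of $\hrtb$ spanned by those decorated forests that respect the leaf/internal decoration rule, it suffices to verify three things: (i) $\hrts$ is closed under the concatenation product $\mul$ and contains the unit $1$; (ii) $\hrts$ is closed under the coproduct $\Delta_{RT}$ and counit $\epl$; (iii) the grading by number of vertices restricts to $\hrts$ and satisfies the connectedness condition $\hrts^{(0)} = \bfk\cdot 1$. The assertion that $\hrts$ is then a Hopf algebra follows from the standard fact, already invoked in Section~\mref{ss:ck}, that every connected graded bialgebra admits an antipode.

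For (i), the concatenation $T_1\cdots T_m \cdot T'_1\cdots T'_n$ of two elements of $\rfs$ is again a planar rooted forest whose leafs are exactly the union of the leafs of the factors, and similarly for the internal vertices; hence the decoration rule persists and $\mul(\hrts\otimes\hrts)\subseteq\hrts$. The empty tree $1$ lies trivially in $\rfs$.

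For (ii), we use the description of $\Delta_{RT}$ by subforests from~\eqref{eq:cop1}. Let $F\in\rfs$ and let $G\in\calf_F$ be a subforest, so that $G$ is the full subforest on some set of vertices of $F$ closed under taking descendants. If $v$ is an internal vertex of $G$, then $v$ has at least one child in $G$, which (being a descendant) is also a child of $v$ in $F$; hence $v$ is internal in $F$ and is therefore decorated by $\Omega$, as required. If $v$ is a leaf of $G$, then $v$ is either a leaf of $F$ (whose decoration lies in $X\sqcup\Omega$, again permitted) or an internal vertex of $F$ whose children were not included; the latter case is impossible by the descendant-closed property of subforests. So $G\in\rfs$. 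For the quotient $F/G$, a vertex $v$ of $F/G$ is either already a leaf of $F$ (hence decorated in $X\sqcup\Omega$) or an internal vertex of $F$ (decorated in $\Omega$) that may become a leaf of $F/G$ after the removal of $G$; in either case the decoration lies in the allowed set for its new status in $F/G$, because an $X$-decorated vertex of $F$ is always a leaf and so cannot be demoted. Hence $F/G\in\rfs$, and $\Delta_{RT}(F)\in\hrts\otimes\hrts$. Closure of $\epl$ is immediate from its definition.

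For (iii), since $\deg(F)=\bre(F_1)+\cdots$ adds across a concatenation and since removal of $G$ leaves exactly $\deg(F)-\deg(G)$ vertices in $F/G$, the restriction of the grading $\hrtb=\bigoplus_{n\ge 0}H_{RT}(X\sqcup\Omega)^{(n)}$ to $\hrts$ yields $\hrts=\bigoplus_{n\ge 0}\hrts^{(n)}$ with $\mul(\hrts^{(p)}\otimes\hrts^{(q)})\subseteq\hrts^{(p+q)}$ and $\Delta_{RT}(\hrts^{(n)})\subseteq\bigoplus_{p+q=n}\hrts^{(p)}\otimes\hrts^{(q)}$. The only element of $\rfs$ with zero vertices is the empty tree $1$, so $\hrts^{(0)}=\bfk\cdot 1=\mathrm{im}\,\ul$, giving connectedness. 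The existence of the antipode and thus the Hopf algebra structure follows. The only subtle step is the subforest analysis in (ii), where one must track how internal-vs-leaf status can change under the cut; the key observation that makes everything work is that $X$-decorations occur only at leafs of $F$ and leafs stay leafs in both $G$ and $F/G$, while $\Omega$ is allowed to appear in either role.
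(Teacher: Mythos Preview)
Your proof is correct and follows essentially the same approach as the paper: both reduce the claim to showing that $\hrts$ is closed under $\mul$ and $\Delta_{RT}$, with the grading and connectedness inherited from $\hrtb$. The paper dispatches the closure under $\Delta_{RT}$ in one sentence (``for the same reason, for any forest $F$ with this condition, any of its subforests $G$ and the quotient forest $F/G$ still have this condition''), whereas you spell out the descendant-closed analysis for $G$ and the leaf/internal tracking for $F/G$ explicitly; this extra detail is sound and arguably clearer.
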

\begin{proof}
Note that $\hrts$ is a subspace of $\hrtb$. It is sufficient to show that $\hrts$ is closed under the multiplication $\mul$ and the coproduct $\Delta_{RT}$. Recall that forests in $\hrts$ are characterized by the condition that their internal vertices are decorated by elements from $\Omega$ only. If two forests have this condition, then their concatenation also has this condition, since an internal vertex of the concatenated forest is an internal vertex of one of the two forests. For the same reason, for any forest $F$ with this condition, any of its subforest $G$ and the quotient forest $F/G$ still have this condition. Thus the subspace $\hrts$ is closed under the concatenation product and the coproduct defined in Eq.~\eqref{eq:cop1}.
\end{proof}

Combining the degree and the grafting operators, we can equip $\rfs$ with other structures.

\begin{defn}\mlabel{de:ofil}
An $\Omega$-operated algebra $(R,P_\Omega)$ with a grading $R=\oplus_{n\geq 0} R^n$ (resp. an increasing filtration $\{R_n\}_{n\geq 0}$) is called an {\bf $\Omega$-operated graded algebra} (resp. {\bf $\Omega$-operated filtered algebra}) if $(R,\oplus_{n\geq 0}R^n)$ is a graded algebra (resp. $(R,\{R_n\}_{n\geq 0})$ is a filtered algebra) and
\begin{equation}
P_\omega (R^n)\subseteq R^{n+1} \quad
\text{(resp. } P_\omega (R_n)\subseteq R_{n+1}).
\mlabel{eq:ofil}
\end{equation}
\end{defn}

As is well-known, a graded algebra $(R,\oplus_{n\geq 0}R^n)$ is a filtered algebra with the filtration $R_n:=\oplus_{k\leq n}R^k, n\geq 0$.
Further by definition, the grafting operator $B_\omega^+$ increases the degree of a rooted forest by one. Thus we have

\begin{lemma}
The $\Omega$-operated algebra $\hrts$ with its grading $\oplus_{n\geq 0}H_{RT}^{n}$ and the associated filtration $H_{RT,(n)}:=\oplus_{k\leq n}H_{RT}^{k}$, is an $\Omega$-operated graded algebra and an $\Omega$-operated filtered algebra.
\mlabel{lem:ofil}
\end{lemma}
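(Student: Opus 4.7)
The plan is to verify the two axioms in Definition \mref{de:ofil}: first that $\hrts=\bigoplus_{n\geq 0}H_{RT}^{(n)}$ is a graded algebra under concatenation, and second that each grafting operator $B_\omega^+$ raises the degree by exactly one. The filtered statement will then follow as a formal consequence.

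For the graded algebra condition, the decomposition $\hrts=\bigoplus_{n\geq 0}H_{RT}^{(n)}$ is already built into the definition, since $\calf(X,\Omega)$ is partitioned by vertex count and $H_{RT}^{(n)}=\bfk\calf^{(n)}$. To show $H_{RT}^{(p)}\cdot H_{RT}^{(q)}\subseteq H_{RT}^{(p+q)}$ it is enough to check on basis elements: for $F\in\calf^{(p)}$ and $G\in\calf^{(q)}$, the concatenation $FG=\mul(F,G)$ is again a planar rooted forest in $\calf(X,\Omega)$ (this is the closure property already used in the proof of Theorem \mref{thm:hopfx}), and its total number of vertices is $p+q$ since concatenation neither creates nor destroys vertices. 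Hence $FG\in\calf^{(p+q)}$, giving the required inclusion.

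For the operator condition, I would unpack the definition of $B_\omega^+$ on a basis forest $F\in\calf(X,\Omega)$: if $F=1$ then $B_\omega^+(F)=\bullet_\omega$ has exactly one vertex; if $F\neq 1$ then $B_\omega^+(F)$ is the planar rooted tree obtained by grafting the trees of $F$ onto a single new root decorated by $\omega$, which adds precisely one vertex. Hence in either case $\deg(B_\omega^+(F))=\deg(F)+1$, so $B_\omega^+(\calf^{(n)})\subseteq\calf^{(n+1)}$, and by linear extension $B_\omega^+(H_{RT}^{(n)})\subseteq H_{RT}^{(n+1)}$, which is the graded condition in Eq.~\eqref{eq:ofil}.

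Finally, the $\Omega$-operated filtered statement follows at once: the standard fact that a graded algebra becomes a filtered algebra via $H_{RT,(n)}:=\bigoplus_{k\leq n}H_{RT}^{(k)}$ supplies the underlying filtered algebra, and $B_\omega^+(H_{RT,(n)})=\sum_{k\leq n}B_\omega^+(H_{RT}^{(k)})\subseteq\sum_{k\leq n}H_{RT}^{(k+1)}\subseteq H_{RT,(n+1)}$ follows directly from the graded operator condition just established. There is no substantive obstacle in this lemma; the whole argument is a bookkeeping verification that vertex count is additive under concatenation and is incremented by one under each grafting, and the role of the lemma is simply to record the compatibility we will need later when invoking a degree-based induction.
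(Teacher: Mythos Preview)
Your proof is correct and follows essentially the same approach as the paper, which simply records (in the sentence preceding the lemma) that the grafting operator $B_\omega^+$ increases the vertex count by one and that a graded algebra yields a filtered one in the standard way. You have merely spelled out in more detail what the paper leaves implicit.
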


For our later applications to \mrbas, we introduce another increasing filtration on $\rfs$ which leads to the notion of the depth of a decorated rooted forest.
For distinction, we use $\calf_{(n)}$ and $H_{RT,(n)}$ for the previous filtration defined by degree and $\calf_n$ and $H_{RT,n}$ for the new filtration defined by depth.

Denote $\bullet_{X}:=\{\bullet_{x}\mid x\in X\}$ and set
\begin{align*}
\calf_0:=M(\bullet_{X})=S(\bullet_{X})\sqcup \{\etree\},
\end{align*}
where $M(\bullet_{X})$ (resp.~$S(\bullet_{X})$) is the submonoid (resp.~subsemigroup) of $\rfs$ generated by $\bullet_{X}$.
Here the using of the notations $M$ and $S$ are justified since $M(\bullet_{X})$ (resp.~$S(\bullet_{X})$) is indeed isomorphic to the free monoid (resp. semigroup) generated by $\bullet_{X}$.
Suppose that $\calf_n$ has been defined for an $n\geq 0$. Then define
\begin{align}
\calf_{n+1}:=M(\bullet_{X}\sqcup (\sqcup_{\omega\in \Omega} B_{\omega}^{+}(\calf_n))).
\mlabel{eq:frec}
\end{align}
Thus we obtain $\calf_n\subseteq \calf_{n+1}$ and
\begin{align}
\rfs = \lim_{\longrightarrow} \calf_n=\bigcup_{n=0}^{\infty}\calf_n.
\mlabel{eq:rdeff}
\end{align}
Elements $F\in \calf_n\setminus \calf_{n-1}$ are said to have {\bf depth} $n$, denoted by $\dep(F) = n$.
Here are some examples:
\begin{align*}
\dep(\etree) =&\ \dep(\bullet_x) =0,\ \dep(\bullet_\omega)=\dep(B^+_{\omega}(\etree)) = 1,\  \dep(\tddeux{$\omega$}{$\alpha$})= \dep(B^+_{\omega}(B^+_{\alpha}(\etree))) =2, \\
\dep(\tdun{$x$}\tddeux{$\omega$}{$y$}\tdun{$y$}) =&\ \dep(\tddeux{$\omega$}{$y$})=
\dep(B^+_{\omega}(\bullet_y)) =1, \ \dep(~\tdtroisun{$\omega$}{$x$}{$\alpha$}) = \dep(B^+_{\omega}(B^+_{\alpha}(\etree) \bullet_x)) = 2,
\end{align*}
where $\alpha,\omega\in \Omega$ and $x, y \in X$.

Note the subtle difference of this depth from the usual notion of depth of a rooted tree, defined to be the length of the longest path from a root of $F$ to its leafs. The advantage of this new depth is that it is consistent with the natural depth of bracketed words to be introduced in Section~\mref{ss:freeoa}. This difference is most easily seen in $\dep(\bullet_x)=0$ while $\dep(\bullet_\omega)=1$. In general our notion $\dep(F)$ of depth for $F\in \calf$ is the same as the usual depth when all the longest paths from a root of $F$ to the leafs end in vertices with decorations from $X$; while $\dep(F)$ is the usual depth of $F$ plus one if one of these longest paths ends in a vertex with decoration from $\Omega$.

\subsection{Free $\Omega$-cocycle Hopf algebras of decorated planar rooted forests}
In this subsection, we will combine the notions of $\Omega$-operated algebras and Hopf algebras to define an $\Omega$-(operated) Hopf algebra and $\Omega$-(operated) cocycle Hopf algebra.
We then show that $\hrts$ is a free $\Omega$-cocycle Hopf algebra on the set $X$.
For this purpose, we recall the following concepts.

\begin{defn}\cite{Guo09} Let $\Omega$ be a nonempty set.
\begin{enumerate}
\item
 An {\bf $\Omega$-operated algebra} is an algebra $A$ together with a family of linear operators $P_{\omega}: A\to A$, $\omega\in \Omega$.
\item
Let $(A,\, \Po)$ and $(A',\,\Pop)$ be  $\Omega$-operated algebras.
A linear map $\phi : A\rightarrow A'$ is called an {\bf $\Omega$-operated algebra homomorphism} if $\phi$ is an algebra homomorphism such that $\phi P_\omega = P'_\omega \phi$ for $\omega\in \Omega$.
\item
A {\bf free $\Omega$-operated algebra on a set $X$} is an $\Omega$-operated algebra $(A, \Po)$ together with a set map $j_{X}: X\rightarrow A$ with the property that, for any $\Omega$-operated algebra $(A', \Pop)$ and any set map $f: X\rightarrow A'$, there is a unique homomorphism $\bar f:A\to A'$ of $\Omega$-operated algebras such that $\bar{f} j_X=f$.
\end{enumerate}
\end{defn}

Now we enrich these notions by adding the bialgebra structures.
\begin{defn}
\begin{enumerate}
\item An {\bf $\Omega$-operated bialgebra} is a bialgebra $(H,m,1_H,\Delta, \varepsilon)$ which is also an $\Omega$-operated algebra $(H,\,\Po)$.
\item Let $(H,\,\Po)$ and $(H',\,\Pop)$ be $\Omega$-operated bialgebras.
A linear map $\phi : H\rightarrow H'$ is called an {\bf $\Omega$-operated bialgebra homomorphism} if $\phi$ is a bialgebra homomorphism such that $\phi  P_\omega = P'_\omega \phi$ for $\omega\in \Omega$.
\item
An {\bf$\Omega$-cocycle bialgebra} is an $\Omega$-operated bialgebra $(H,m,1_H,\Delta, \varepsilon,\,\Po)$ which satisfies the cocycle condition:
\begin{equation}
\Delta P_\omega=P_\omega\ot 1_H + (\id\ot P_\omega)\Delta \, \text{ for }\omega \in \Omega.
\mlabel{eq:cocycle2}
\end{equation}
If the bialgebra in an $\Omega$-cocycle bialgebra is a Hopf algebra, then it is called an {\bf$\Omega$-cocycle Hopf algebra}.
\item
The {\bf free $\Omega$-cocycle bialgebra on a set $X$} is an $\Omega$-cocycle bialgebra $(H_X,\, m_X,\, 1_X,\, \Delta_X,\,  \varepsilon_X,\,\\ \Po)$ together with a set map $j_X:X\to H_X$ with the property that, for any cocycle bialgebra $(H,\, m,\, 1_H,\,  \Delta, \, \varepsilon,\,\Pop)$ and set map $f:X\to H$ whose images are primitive (that is, $\Delta(f(x))=f(x)\ot 1_H+1_H\ot f(x)$), there is a unique homomorphism $\free{f}:H_X\to H$ of $\Omega$-operated bialgebras such that $\free{f} j_X=f$. The concept of a {\bf free $\Omega$-cocycle Hopf algebra} is defined in the same way.
\mlabel{it:def4}
\end{enumerate}
\mlabel{de:decHopf}
\end{defn}

We are indebt to Foissy for the following result.

\begin{lemma}
Let $(H,\, m,\,1_H,\, \Delta,\,\varepsilon,\, \Po)$
be an $\Omega$-cocycle bialgebra. Then for each $\omega\in \Omega$,
$$P_{\omega} (H) := \{P_{\omega} (h) \mid h\in H\}$$ is a coideal of $H$.
\mlabel{lem:coidealv}
\end{lemma}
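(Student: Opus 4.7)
The plan is to verify directly the two defining conditions for $P_\omega(H)$ to be a coideal, namely that $\Delta(P_\omega(H)) \subseteq P_\omega(H)\otimes H + H\otimes P_\omega(H)$ and that $\varepsilon(P_\omega(H))=0$. Both will fall out almost immediately from the cocycle identity \eqref{eq:cocycle2} together with the standard counit axiom $(\varepsilon\ot\id)\Delta=\id$.

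For the coproduct condition, I would simply take an arbitrary element $P_\omega(h)\in P_\omega(H)$ and apply \eqref{eq:cocycle2} to write
\[
\Delta P_\omega(h)\;=\;P_\omega(h)\ot 1_H\;+\;(\id\ot P_\omega)\Delta(h).
\]
The first summand obviously lies in $P_\omega(H)\ot H$, while the second, written in Sweedler notation as $\sum h_{(1)}\ot P_\omega(h_{(2)})$, lies in $H\ot P_\omega(H)$. Adding the two gives containment in $P_\omega(H)\ot H + H\ot P_\omega(H)$, as required.

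For the counit condition, I would apply $\varepsilon\ot\id$ to both sides of the cocycle identity. The left-hand side becomes $(\varepsilon\ot\id)\Delta P_\omega(h)=P_\omega(h)$, and the right-hand side becomes
\[
\varepsilon(P_\omega(h))\,1_H\;+\;\sum\varepsilon(h_{(1)})\,P_\omega(h_{(2)})\;=\;\varepsilon(P_\omega(h))\,1_H + P_\omega(h).
\]
Comparing the two sides forces $\varepsilon(P_\omega(h))\,1_H=0$, and since $1_H$ is a nonzero element of a free $\bfk$-module summand (the unit), this yields $\varepsilon(P_\omega(h))=0$.

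There is essentially no obstacle here; the lemma is a direct unpacking of the cocycle condition. The only point requiring a bit of care is the counit step, where one must be sure that the equation $\varepsilon(P_\omega(h))\,1_H=0$ in $H$ actually forces the scalar $\varepsilon(P_\omega(h))\in\bfk$ to vanish — this is immediate from the fact that $u:\bfk\to H$, $1_\bfk\mapsto 1_H$, is injective in any bialgebra (as $\varepsilon\circ u=\id_\bfk$). With these two containments established, $P_\omega(H)$ satisfies the definition of a coideal.
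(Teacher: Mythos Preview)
Your proof is correct and follows essentially the same strategy as the paper: both parts rest directly on the cocycle identity \eqref{eq:cocycle2}, and the coproduct half is identical to the paper's. For the counit half there is a minor variation: the paper applies $\varepsilon\ot\varepsilon$ to the cocycle identity (using the relation $\varepsilon=(\varepsilon\ot\varepsilon)\Delta$) to obtain $\varepsilon P_\omega(h)=2\,\varepsilon P_\omega(h)$, whereas you apply $\varepsilon\ot\id$ and invoke the injectivity of $u$; both are equally short and valid.
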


\begin{proof}
Let $\omega\in \Omega$. We first show $P_{\omega} (h)\subseteq \ker \varepsilon$.
Let $h':=P_{\omega}(h) \in H$ be arbitrary with $h \in H$. Using the Sweedler notation, we can write
\begin{align*}
\varepsilon(h')&=\varepsilon\biggl(\sum_{(h')} h'_{(1)} \varepsilon( h'_{(2)})\biggr)
=\sum_{(h')} \varepsilon(h'_1)  \varepsilon( h'_2)
=(\varepsilon\ot \varepsilon) \Delta(h').
\end{align*}
Thus
\begin{align*}
\varepsilon(h')&=\varepsilon P_{\omega}(h)=(\varepsilon\ot \varepsilon) \Delta(h')=(\varepsilon\ot \varepsilon) \Delta(P_{\omega}(h))\\
&=(\varepsilon\ot \varepsilon)\Big(P_{\omega}(h)\ot 1_H+(\id \ot P_{\omega})\Delta(h)\Big)\quad(\text{by Eq.~(\mref{eq:cocycle2})})\\
&=(\varepsilon\ot \varepsilon)(P_{\omega}(h)\ot 1_H)+(\varepsilon\ot \varepsilon P_{\omega})\Delta(h)\\
&=\varepsilon P_{\omega}(h)+\sum_{(h)}\varepsilon(h_{(1)})\varepsilon P_{\omega}(h_{(2)})\\
&=\varepsilon P_{\omega}(h)+\varepsilon P_{\omega}\Big(\sum_{(h)}\varepsilon(h_{(1)})(h_{(2)})\Big)\\
&=\varepsilon P_{\omega}(h)+\varepsilon P_{\omega}(h),
\end{align*}
which implies that $\varepsilon P_{\omega}(h)=0$.

Secondly, for any $h\in H$,
\begin{align*}
\Delta(P_{\omega}(h))&=P_{\omega}(h)\ot 1_H+(\id \ot P_{\omega})\Delta(h) \quad \text{ (by Eq.~(\mref{eq:cocycle2}))}\\
&= P_{\omega}(h)\otimes 1_{H}+(\id \ot P_{\omega}) \bigg( \sum_{(h)}(h_{(1)}\ot h_{(2)}) \bigg)\\
&= P_{\omega}(h)\otimes 1_{H}+ \sum_{(h)}h_{(1)}\ot P_{\omega}(h_{(2)}) \in P_{\omega}(H) \otimes H+ H\otimes P_{\omega}(H).
\end{align*}
Thus $P_{\omega}(H)$ is a coideal.
\end{proof}

The following result generalizes the universal properties of several related structures studied in~\cite{CK98, Guo09, Moe01, ZGG16}. See~\cite[Theorem~2.3]{CGPZ18} for the commutative case.

\begin{lemma}\mcite{ZCGL18}
Let $j_{X}: X\hookrightarrow \hrts$, $x \mapsto \bullet_{x}$ be the nature embedding and $m_{RT}$ be the concatenation product.
The quadruple $(\hrts, \,\mul,\,1, \, \Bo)$ together with $j_X$ is the free $\Omega$-operated algebra on $X$.
\mlabel{lem:propm}
\end{lemma}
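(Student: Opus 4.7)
The plan is to prove the universal property by a recursive construction of the required homomorphism, using the depth filtration on $\rfs$ established in Eq.~(\mref{eq:rdeff}), and then checking well-definedness, the operated homomorphism property, and uniqueness.

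First I would fix an arbitrary target $\Omega$-operated algebra $(A',\Pop)$ and a set map $f:X\to A'$, and explain why every nonempty forest $F\in\rfs$ admits a canonical recursive decomposition compatible with the generating data: either $F=\etree$, or $F=T_1\cdots T_k$ is a concatenation of $k\geq 1$ decorated trees, where each tree $T_i\in\rts$ is either a leaf $\bullet_x$ with $x\in X$, or of the form $B_\omega^+(F')$ with $\omega\in\Omega$ and $F'\in\rfs$ of strictly smaller depth (here $\bullet_\omega=B_\omega^+(\etree)$, so $\Omega$-decorated leaves are absorbed into the second case). This is precisely the content of Eq.~(\mref{eq:frec}), and the uniqueness of the decomposition is what makes $\rfs$ behave like a free object.

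Next I would define $\bar f:\hrts\to A'$ by induction on the depth $\dep(F)$. Set $\bar f(\etree):=1_{A'}$ and $\bar f(\bullet_x):=f(x)$ for $x\in X$ as the depth-zero base. Assuming $\bar f$ has been defined on $\calf_n$, extend it to $\calf_{n+1}$ by the two clauses
\[
\bar f(B_\omega^+(F')):=P'_\omega(\bar f(F')),\qquad \bar f(T_1\cdots T_k):=\bar f(T_1)\cdots\bar f(T_k),
\]
and then extend linearly to $\hrts$. Well-definedness follows from the uniqueness of the tree-vs-leaf-vs-grafting decomposition above; the definition is forced, hence unambiguous.

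Then I would verify the three required properties. Multiplicativity on basis forests is immediate because $m_{RT}$ is concatenation and $\bar f$ was defined concatenation-by-concatenation; unit preservation is the base case $\bar f(\etree)=1_{A'}$. The intertwining relation $\bar f\circ B_\omega^+=P'_\omega\circ\bar f$ holds by the very recursive clause. Composing with $j_X$ yields $\bar f(\bullet_x)=f(x)$, so $\bar f\circ j_X=f$. Finally, for uniqueness, suppose $\phi:\hrts\to A'$ is any $\Omega$-operated algebra homomorphism with $\phi\circ j_X=f$. Induction on depth shows $\phi=\bar f$: they agree on $\etree$ (unit) and on $\bullet_x$ (by the factorization through $j_X$), and the inductive step reduces $\phi(B_\omega^+(F'))=P'_\omega(\phi(F'))$ and $\phi(T_1\cdots T_k)=\phi(T_1)\cdots\phi(T_k)$ to the induction hypothesis. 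No step is really an obstacle; the only point requiring a moment of care is making sure the recursion is based on the depth introduced in Section~\mref{ss:ck} rather than on the number of vertices, because the grafting clause $B_\omega^+(\etree)=\bullet_\omega$ would otherwise appear to match the leaf clause, and the depth filtration is exactly what disambiguates these cases.
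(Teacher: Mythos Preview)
Your proof is correct and follows the standard recursive construction via the depth filtration. Note, however, that the paper does not actually supply its own proof of this lemma: it is stated with a citation to~\mcite{ZCGL18} and used as a known input. So there is no in-paper argument to compare against; your proposal simply fills in what the paper takes for granted, and does so along the expected lines (induction on $\dep(F)$ using the decomposition in Eq.~(\mref{eq:frec})). One small correction: the depth filtration you rely on is introduced in the subsection on multi-decorated forests (around Eqs.~(\mref{eq:frec})--(\mref{eq:rdeff})), not in Section~\mref{ss:ck} as your final sentence suggests.
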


We next strengthen Lemma~\mref{lem:propm} to include the bialgebra structure.
\begin{theorem}
Let $j_{X}: X\hookrightarrow \hrts$, $x \mapsto \bullet_{x}$ be the nature embedding and $m_{RT}$ be the concatenation product.
\begin{enumerate}
\item
 The sextuple $(\hrts, \,\mul,\,1, \, \Delta_{RT},\,\epl, \,\Bo)$ together with $j_X$ is the free $\Omega$-cocycle  bialgebra on $X$.  \mlabel{it:fubialg}

\item The Hopf algebra given by the connected graded bialgebra $(\hrts, \,\mul,\,1, \, \Delta_{RT},\,\epl, \\
\, \Bo)$ together with $j_X$ is the free $\Omega$-cocycle Hopf algebra on $X$.
 \mlabel{it:fuhopf}
\end{enumerate}
\mlabel{thm:propm}
\end{theorem}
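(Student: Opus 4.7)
The plan is to build on Lemma~\mref{lem:propm}, which already provides a unique $\Omega$-operated algebra morphism $\bar f: \hrts\to H$ extending $f$, and to upgrade this morphism to a bialgebra morphism by showing that it automatically intertwines coproducts and counits. Let $(H,m,1_H,\Delta,\varepsilon,\Pop)$ be an $\Omega$-cocycle bialgebra and $f:X\to H$ a set map whose images are primitive. Set $\bar f$ to be the $\Omega$-operated algebra homomorphism obtained from Lemma~\mref{lem:propm}.

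For compatibility with the counit, I would verify $\varepsilon\bar f=\epl$ by induction on the depth $\dep(F)$ using the decomposition~\eqref{eq:rdeff}. The base cases are $\varepsilon\bar f(1)=\varepsilon(1_H)=1=\epl(1)$ and $\varepsilon\bar f(\bullet_x)=\varepsilon(f(x))=0=\epl(\bullet_x)$, the latter because primitive elements lie in $\ker\varepsilon$. For $T=B^+_\omega(F)$, the identity $\varepsilon\bar f(T)=\varepsilon P'_\omega(\bar f(F))=0$ is immediate from Lemma~\mref{lem:coidealv}. For a product $F=T_1\cdots T_k$ with $k\geq 2$, multiplicativity of both $\varepsilon$ and $\bar f$ reduces this to the tree case.

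For compatibility with the coproduct, I would prove $(\bar f\otimes\bar f)\Delta_{RT}=\Delta\bar f$ by induction on depth. The base cases $F=1$ and $F=\bullet_x$ are handled by $\Delta_{RT}(1)=1\otimes 1$ and the primitivity of $f(x)$. For the inductive step on a tree $T=B^+_\omega(F)$, I would combine the cocycle condition~\eqref{eq:cocycle} in $\hrts$ with the cocycle condition~\eqref{eq:cocycle2} in $H$ and the hypothesis $\bar f B^+_\omega=P'_\omega\bar f$ to compute
\begin{align*}
(\bar f\otimes\bar f)\Delta_{RT}(T)
&=(\bar f\otimes\bar f)\bigl(B^+_\omega(F)\otimes 1+(\id\otimes B^+_\omega)\Delta_{RT}(F)\bigr)\\
&=P'_\omega\bar f(F)\otimes 1_H+(\id\otimes P'_\omega)(\bar f\otimes\bar f)\Delta_{RT}(F)\\
&=P'_\omega\bar f(F)\otimes 1_H+(\id\otimes P'_\omega)\Delta\bar f(F)
 =\Delta P'_\omega\bar f(F)=\Delta\bar f(T),
\end{align*}
where the penultimate equality uses~\eqref{eq:cocycle2}. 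For a forest $F=T_1\cdots T_k$ with $k\geq 2$, coproduct multiplicativity together with~\eqref{eq:cop2} reduces the claim to the tree case already handled. Uniqueness of $\bar f$ as a bialgebra morphism is inherited from its uniqueness as an $\Omega$-operated algebra morphism. This proves part~\eqref{it:fubialg}.

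Part~\eqref{it:fuhopf} then follows formally: by Theorem~\mref{thm:hopfx}, $\hrts$ is a connected graded bialgebra and hence a Hopf algebra, and the free object property in the Hopf category is obtained by restricting the universal property of part~\eqref{it:fubialg} to cocycle Hopf algebras and observing that any bialgebra morphism between Hopf algebras automatically commutes with the antipodes. The main obstacle, as usual in such universality proofs, will be the inductive verification of coproduct compatibility at the node $B^+_\omega(F)$; the right notion of depth introduced in~\eqref{eq:frec} and the cocycle condition~\eqref{eq:cocycle2} are tailored precisely to make this induction go through, so once the framework is set up the computation is essentially forced.
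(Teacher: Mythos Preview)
Your proposal is correct and follows essentially the same approach as the paper. The only cosmetic difference is that the paper phrases the two compatibility checks structurally---defining the subsets $\mathscr{A}=\{F:\Delta\bar f(F)=(\bar f\otimes\bar f)\Delta_{RT}(F)\}$ and $\mathscr{B}=\{F:\varepsilon\bar f(F)=\epl(F)\}$ and showing each is an $\Omega$-operated subalgebra of $\hrts$ containing $j_X(X)$---whereas you unroll this into an explicit induction on depth; the base cases, the use of Lemma~\mref{lem:coidealv} for the counit at a grafting, the cocycle computation at $B^+_\omega(F)$, and the reduction of forests to trees via multiplicativity are identical in both versions.
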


\begin{proof}
(\mref{it:fubialg})
By Theorem~\mref{thm:hopfx}, the quintuple $(\hrts, \,\mul,\,1,\, \Delta_{RT}, \epl)$ is a bialgebra.
Furthermore, by Eq.~(\mref{eq:cocycle}), the sextuple $(\hrts, \,\mul,\,1,\, \Delta_{RT},\epl, \,\Bo)$ is an $\Omega$-cocycle  bialgebra.

To verify the freeness, let $(H,\, m,\,1_H,\, \Delta,\,\varepsilon,\, \Po)$ be an $\Omega$-cocycle bialgebra and $f: X\rightarrow H$ a set map such that
\begin{align}
\Delta(f(x))=f(x)\ot 1_H+1_H\ot f(x) \, \text{ for all }\, x\in X.
\mlabel{eq:prim}
\end{align}
In particular, $(H,\, m,\, 1_H,\, \Po)$ is an $\Omega$-operated algebra.
It follows from Lemma~\mref{lem:propm} that there exists a unique $\Omega$-operated algebra homomorphism $\free{f}:\hrts \to H$ such that $\free{f} j_X={f}$. It remains to check the following two compatibilities between the coproducts $\Delta$ and $\col$, and between the counit $\varepsilon$ and $\epl$.
\begin{align}
\Delta \free{f} (F)&=(\free{f}\ot \free{f}) \col (F), \mlabel{eq:copcomp}\\
\varepsilon \free{f} (F)&=\epl (F) \quad  \text{for all } F\in \rfs. \mlabel{eq:counit}
\end{align}

To verify Eq.~(\mref{eq:copcomp}), we consider the set
\begin{align*}
\mathscr{A} := \{ F\in \hrts \, | \, \Delta(\free{f}(F))=(\free{f} \ot \free{f}) \col(F) \}.
\end{align*}
By Lemma~\mref{lem:propm}, $\hrts$ is generated by $X$ as an $\Omega$-operated algebra. Thus to verify Eq.~(\mref{eq:copcomp}) we just need to show that $\mathscr{A}$ is an $\Omega$-operated subalgebra of $\hrts$ that contains $X$.

Since $\free{f}$ is an $\Omega$-operated algebra homomorphism, and $\col$ and $\Delta$ are algebra homomorphisms from $\hrts$ and $H$, respectively, we get $1 \in \mathscr{A}$ and $\mathscr{A}$ is a subalgebra of $\hrts$. For any $x \in X$, we have
\begin{align*}
\Delta(\free{f}(\bullet_{x}))=&\ \Delta(f(x))\\
=&\ f(x) \ot 1_H +1_H \ot f(x)  \,\, \text{(by Eq.~(\mref{eq:prim}))}\\
=&\ \free{f} (\bullet_{x})\ot \free{f}(1)+\free{f}(1)\ot \free{f}(\bullet_{x}) \\
=&\ (\free{f} \ot \free{f})(\bullet_{x} \ot 1 +1 \ot \bullet_{x} )\\
=&\ (\free{f} \ot \free{f})\col(\bullet_{x}).
\end{align*}
Thus $\bullet_{x} \in \mathscr{A}$. Further for any $F \in \mathscr{A}$ and $\omega \in \Omega$, we have
\begin{align*}
&\ \Delta \free{f} (B_{\omega}^{+}({F}))\\
=&\ \Delta P_\omega(\free{f} ({F}))
\quad(\text{by $\bar{f}$ being an $\Omega$-operated algebra homomorphism}) \\
=&\ P_\omega(\free{f}({F}))\ot 1_{H}+ (\id\ot P_\omega)\Delta(\free{f} ({F}))\quad(\text{by Eq.~(\mref{eq:cocycle2})})\\
=&\ P_\omega(\free{f}({F}))\ot 1_{H}+ (\id\ot P_\omega)(\free{f}\ot \free{f}) \col ({F}) \quad \text{(by $F \in \mathscr{A}$)}\\
=&\ P_\omega(\free{f}({F}))\ot 1_{H}+ (\free{f}\ot P_\omega\free{f}) \col ({F})\\
=&\ \free{f}(B_{\omega}^{+}({F}))\ot 1_{H}+ (\free{f}\ot \free{f}B_{\omega}^+) \col ({F}) \ \ (\text{by $\bar{f}$ being an $\Omega$-operated algebra homomorphism}) \\
=&\ (\free{f}\ot \free{f})\Big(B_{\omega}^{+}({F})\ot 1+(\id\ot B_{\omega}^+)\col ({F})\Big) \\
=&\ (\free{f}\ot \free{f}) \col (B_{\omega}^+({F}))\\
=&\ (\free{f}\ot \free{f}) \col (F).
\end{align*}
Thus $\mathscr{A}$ is stable under  $B_{\omega}^{+}$ for any $\omega \in \Omega$ and so $\mathscr{A}=\hrts$.

Similarly, to verify Eq.~(\mref{eq:counit}), we just need to show that  the subset
\begin{align*}
\mathscr{B}:=\{ F \in \hrts \mid \, \varepsilon(\free{f}(F))= \varepsilon_{\mathrm{RT}}(F)\} \subseteq \hrts.
\end{align*}
is an $\Omega$-operated subalgebra of $\hrts$.

Since $\free{f}$ is an $\Omega$-operated algebra homomorphism, $\varepsilon_{\mathrm{RF}}$ and $\varepsilon$ are algebra homomorphisms from $\hrts$ and $H$, respectively. So we get $1 \in \mathscr{B}$ and $\mathscr{B}$ is a subalgebra of $\hrts$. For any $x \in X$, by Eq.~(\mref{eq:prim}) and the left counicity, we obtain
\begin{align*}
(\varepsilon \ot \id) \Delta(f(x))= \varepsilon(f(x)) 1_H +1_\bfk \ot f(x) =\beta_{\ell}(f(x)),
\end{align*}
which implies that $\varepsilon(f(x))=0$. Then
\begin{align*}
\varepsilon(\free{f}(\bullet_{x}))= \varepsilon(f(x))=0=\varepsilon_{\mathrm{RT}}(\bullet_{x}),
\end{align*}
showing $\bullet_{x} \in \mathscr{B}$. For $F \in \mathscr{B}$ and $\omega \in \Omega$, we have
\begin{align*}
\varepsilon(\free{f}(B_{\omega}^{+}(F)))&\ = \varepsilon(P_{\omega}(\free{f}(F))) \,\, \text{(by $\free{f}$ being an $\Omega$-operated algebra homomorphism)}\\
&\ =\varepsilon P_\omega(\bar{f}({F})) =0\quad(\text{by Lemma~\mref{lem:coidealv}})\\
&\ =\varepsilon_{\mathrm{RT}}(B_{\omega}^{+}(F)).
\end{align*}
Hence $\mathscr{B}$ is stable under $B_{\omega}^{+}$ for any $\omega \in \Omega$ and so $\mathscr{B}= \hrts$. This completes the proof.

(\mref{it:fuhopf})
The proof follows from Item~(\mref{it:fubialg}) and the well-known fact that
any bialgebra homomorphism between two Hopf algebras is compatible with the antipodes~\cite[Lemma~4.04]{Swe69}.
\end{proof}

If $X = \emptyset$, we obtain the freeness of $\hck(\emptyset, \Omega)=H_{RT}(\Omega)$, which is the decorated noncommutative Connes-Kreimer Hopf algebra by Remark~\mref{re:3ex}~(\mref{it:2ex}).

\begin{coro}
The sextuple $(H_{RT}(\Omega), \,\mul,\,1, \, \Delta_{RT},\,\epl, \,\Bo)$ is the free $\Omega$-cocycle Hopf algebra on the empty set, that is, the initial object in the category of $\Omega$-cocycle Hopf algebras.
\mlabel{cor:rt16}
\end{coro}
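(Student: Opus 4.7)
The plan is to obtain this corollary as a direct specialization of Theorem~\mref{thm:propm}(\mref{it:fuhopf}) at $X = \emptyset$, using Remark~\mref{re:3ex}(\mref{it:2ex}) to identify the decorated-forest Hopf algebra with the classical noncommutative Connes-Kreimer Hopf algebra.

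First, I would invoke Remark~\mref{re:3ex}(\mref{it:2ex}) to identify $\calf(\emptyset, \Omega)$ with $\calf(\Omega)$, so that $H_{RT}(\emptyset, \Omega) = H_{RT}(\Omega)$ with the same product $\mul$, unit $1$, coproduct $\Delta_{RT}$, counit $\epl$, and family of grafting operators $\Bo$. Under this identification, the sextuple in the statement is exactly the one furnished by Theorem~\mref{thm:propm} applied to the empty generating set.

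Next, I would apply Theorem~\mref{thm:propm}(\mref{it:fuhopf}) with $X = \emptyset$. The inclusion $j_\emptyset$ is the unique empty function, and for any target $\Omega$-cocycle Hopf algebra $(H, m, 1_H, \Delta, \varepsilon, \Po)$ the set map $f : \emptyset \to H$ is likewise uniquely determined as the empty function. The hypothesis that the images of $f$ lie among the primitives is vacuously satisfied, so Theorem~\mref{thm:propm}(\mref{it:fuhopf}) yields a unique $\Omega$-operated Hopf algebra homomorphism $\bar f : H_{RT}(\Omega) \to H$.

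Finally, this existence and uniqueness of a morphism from $H_{RT}(\Omega)$ to every object of the category of $\Omega$-cocycle Hopf algebras is, by definition, the statement that $H_{RT}(\Omega)$ is the initial object of that category, which in turn coincides with being the free $\Omega$-cocycle Hopf algebra on $\emptyset$ in the sense of Definition~\mref{de:decHopf}(\mref{it:def4}). There is essentially no obstacle here; the only point worth making explicit is that the primitivity condition in the freeness clause becomes vacuous at $X = \emptyset$, which is what aligns ``free on the empty set'' with the initial-object formulation.
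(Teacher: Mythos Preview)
Your proposal is correct and follows exactly the paper's approach: the paper simply states, just before the corollary, that taking $X = \emptyset$ in Theorem~\mref{thm:propm} and invoking Remark~\mref{re:3ex}(\mref{it:2ex}) gives the result. Your write-up is a more detailed unpacking of the same specialization, including the observation that the primitivity hypothesis becomes vacuous and that freeness on $\emptyset$ amounts to initiality.
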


Further taking $\Omega=\{\omega\}$ to be a singleton in Corollary~\mref{cor:rt16}, all decorated planar rooted forests have the same decoration and hence can be rendered undecorated as in the Foissy-Holtkamp Hopf algebra~\mcite{Foi02, Hol03}. Thus we have, similar to~\mcite{CK98,Moe01,ZGG16},

\begin{coro}
Let $\mathcal{F}$ be the set of planar rooted forests without decorations.
Then sextuple $(\bfk\mathcal{ F}, \,\mul,\,1, \, \Delta_{RT},\,\epl, \,B_{\omega}^+)$ is the free cocycle Hopf algebra on the empty set,
that is, the initial object in the category of cocycle Hopf algebras.
\mlabel{coro:rt16}
\end{coro}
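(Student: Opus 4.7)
The plan is to derive Corollary~\ref{coro:rt16} directly from Corollary~\ref{cor:rt16} by specializing $\Omega$ to a singleton set $\{\omega\}$ and then stripping away the (now redundant) vertex decorations. Since an $\Omega$-cocycle Hopf algebra with a one-element $\Omega$ is precisely a cocycle Hopf algebra in the classical sense (a Hopf algebra equipped with a single linear operator satisfying the 1-cocycle condition~\eqref{eq:cocycle2}), the two categories are literally the same, and it suffices to exhibit an isomorphism between $H_{RT}(\{\omega\})$ and $\bfk\mathcal{F}$ of $\{\omega\}$-cocycle Hopf algebras.

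First I would define the obvious forgetful bijection $\Phi:\calf(\{\omega\})\to \mathcal{F}$ sending a planar rooted forest whose every vertex carries the unique decoration $\omega$ to the same underlying planar rooted forest with its decorations erased, and extend it $\bfk$-linearly. Since every vertex of every forest in $\calf(\{\omega\})$ carries exactly one possible label, $\Phi$ is a bijection of sets of forests, and both sides have the same concatenation product with the same unit $\etree$, so $\Phi$ is an algebra isomorphism. The coproduct $\Delta_{RT}$ depends only on the combinatorial data of subforests and quotient forests (Eqs.~\eqref{eq:cop1}--\eqref{eq:cop2}), which are preserved under $\Phi$, so $\Phi$ also intertwines the coproducts and the counits. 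Finally, $\Phi$ intertwines the single grafting operator $B_\omega^+$ on $H_{RT}(\{\omega\})$ with the undecorated grafting operator $B_\omega^+$ on $\bfk\mathcal{F}$, since grafting commutes with erasing the unique label.

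Next I would observe that the 1-cocycle condition~\eqref{eq:cocycle2} and the grading by number of vertices are both transported through $\Phi$, so $(\bfk\mathcal{F},\,m_{RT},\,1,\,\Delta_{RT},\,\epl,\,B_\omega^+)$ is a connected graded $\{\omega\}$-cocycle bialgebra, and hence a cocycle Hopf algebra, isomorphic to $H_{RT}(\{\omega\})$. By Corollary~\ref{cor:rt16}, $H_{RT}(\{\omega\})$ is the initial object in the category of $\{\omega\}$-cocycle Hopf algebras. Transporting this universal property across the isomorphism $\Phi$, and identifying $\{\omega\}$-cocycle Hopf algebras with cocycle Hopf algebras (a Hopf algebra $H$ together with a linear map $P:H\to H$ satisfying $\Delta P = P\otimes 1_H + (\id\otimes P)\Delta$), we conclude that $\bfk\mathcal{F}$ is the initial object in the category of cocycle Hopf algebras.

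There is no substantive obstacle here: the argument is a tautological specialization plus a relabeling. The only point requiring a moment of care is the identification of the two categories, namely checking that the data of a single operator satisfying~\eqref{eq:cocycle2} is the same as the data of an $\{\omega\}$-operator family satisfying the same condition, which is immediate from Definition~\ref{de:decHopf}. Once this identification is made, the universal map out of $\bfk\mathcal{F}$ into any cocycle Hopf algebra $(H,P)$ is the unique $\{\omega\}$-operated bialgebra homomorphism produced by Theorem~\ref{thm:propm}(\ref{it:fuhopf}) for the empty generating set, which automatically sends $B_\omega^+$ to $P$.
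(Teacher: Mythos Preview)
Your proposal is correct and follows exactly the approach the paper takes: the paper simply remarks, just before stating the corollary, that taking $\Omega=\{\omega\}$ to be a singleton renders all decorations identical and hence removable, reducing Corollary~\ref{cor:rt16} to the undecorated case. Your write-up spells out this identification more carefully than the paper does, but the idea is the same.
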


\subsection{Free $\Omega$-operated monoids and algebras}
\mlabel{ss:freeoa}
Our next goal is to construct free matching Rota-Baxter algebras by applying the method of Gr\"obner-Shirshov bases which works better in the context of bracketed words. Thus
in this subsection, we recall the construction of a free $\Omega$-operated monoid and $\Omega$-operated algebra in terms of bracketed words on a set $X$ and identify them with the free $\Omega$-operated algebra $H_{RT}(X,\Omega)=\bfk\rfs$. See~\mcite{Guo09} for more details of these bracketed words.

Given an $\omega \in \Omega$ and a set $Y$, let $\lfloor Y \rfloor_{\omega}$
denote the set $\big\{\lfloor y\rfloor_{\omega}\mid y\in Y\big\}$, so it is indexed by $Y$ but disjoint with $Y$. We also assume that the sets $\lfloor Y \rfloor_{\omega}$ to be disjoint with each other as $\omega$ varies in $\Omega$.

We now define the free $\Omega$-operated monoid over the set $X$ as the limit of a direct system
\begin{align*}
\{i_{n,\,n+1}: \frak{M}_{n}\rightarrow \frak{M}_{n+1}\}_{n=0}^{\infty}
\end{align*}
of inductively defined free monoids $\mathfrak{M}_{n}$, where the transition homomorphisms $i_{n+1,\,n}$ are natural embeddings.
For the initial step of $n=0$, we define $\frak{M}_{0}:=M(X)$ and then define
\begin{equation*}
\mathfrak{M}_{1}:= M \big(X\sqcup (\sqcup_{\omega\, \in \Omega}\lfloor \frak{M}_{0} \rfloor_{\omega})\big)
\end{equation*}
with the natural embedding
\begin{equation*}
i_{0,\,1}: \frak{M}_{0}=M(X)\hookrightarrow  \frak{M}_{1}=M\left(X\sqcup (\sqcup_{\omega\,\in \Omega}\lfloor \frak{M}_{0}\rfloor_{\omega})\right).
\end{equation*}
Note that $\lfloor \frak{M}_{0}\rfloor_{\omega}\subseteq \frak{M}_{1}$ for each $\omega\in \Omega$. In particular, $1\in \frak{M}_{0}$ is sent to $1\in \frak{M}_{1}$.
Inductively assume that, for any given $n \geq 1$, $\frak{M}_{k}, k\geq n$ with the natural embedding
\begin{equation}
i_{n-1,\,n}: \frak{M}_{n-1}\hookrightarrow \frak{M}_n
\mlabel{eq:inclu2}
\end{equation}
have been defined. We then define
\begin{equation}
\frak{M}_{n+1}:=M\left(X\sqcup(\sqcup_{\omega\,\in \Omega}\lfloor \frak{M}_n\rfloor_{\omega})\right).
\mlabel{eq:wrec}
\end{equation}
The natural embedding in Eq.~(\mref{eq:inclu2}) induces the natural embedding
$$\lfloor \frak{M}_{n-1}\rfloor_{\omega}\hookrightarrow \lfloor \frak{M}_n\rfloor_{\omega},$$
yielding a monomorphism of free monoids
$$i_{n,\,n+1}: \frak{M}_n=M\left(X\sqcup(\sqcup_{\omega \,\in \Omega}\,\lfloor \frak{M}_{n-1}\rfloor_{\omega})\right)
\hookrightarrow M\left(X\sqcup(\sqcup_{\omega \,\in \Omega}\,\lfloor \frak{M}_n\rfloor_{\omega})\right)=\frak{M}_{n+1}.$$
This completes the inductive construction of the direct system. Finally we define the direct limit of monoids
$$ \frak{M}(X,\,\Omega):=\lim_{\longrightarrow}\frak{M}_{n}=\bigcup_{n\geq 0}\frak{M}_{n}$$
with identity $1$.
Elements in $\frak{M}(X,\,\Omega)$ are called {\bf $\Omega$-bracketed words} in $X$ and elements of $\frak{M}_{n}\backslash \frak{M}_{n-1}$ are said to have {\bf depth} $n$, denoted by $\dep_\frakM(w)=n$. Define
\begin{align*}
P_{\omega}:\frak{M}(X,\,\Omega)\rightarrow \frak{M}(X,\,\Omega),\, u\mapsto \lfloor u \rfloor_{\omega}, \, \omega\in \Omega,
\end{align*}
and extend it by linearity to a linear operator on $\bfk\frak{M}(X,\,\Omega)$, still denoted by $P_{\omega}$.
Then the pair $\big(\frak{M}(X,\,\Omega), \Po \big)$ is an $\Omega$-operated monoid and its linear span $\big(\bfk\frak{M}(X,\,\Omega), \Po \big)$ is an $\Omega$-operated algebra.

Let $X$ be a set and $\Omega$ a nonempty set disjoint with $X$. Taking direct limit in Eq.~(\mref{eq:wrec}) we obtain
\begin{equation}
\frakM(X,\Omega)=M\left(X \sqcup \left(\sqcup_{\omega\in \Omega} \lc \frakM(X,\Omega)\rc\right)\right).
\mlabel{eq:wset}
\end{equation}
Thus any $1\neq u \in \frak{M}(\Omega,\, X)$ has a unique factorization \begin{equation}
u= w_{1} \cdots w_{k}, \quad w_{i}\in X\cup \frak{M}(\Omega,\,  X), 1\leq i\leq k,\, k\geq 1.
\mlabel{eq:wfact}
\end{equation}
We call $k$ the {\bf breadth} of $u$ and denote it by $|u|$. For $u=1 \in \frak{M}(\Omega,\,  X),$ we define $|u|:=0.$

\begin{prop} \cite[Corollary~3.6]{Guo09}
Let $X$ be a set and $\Omega$ a nonempty set. Let $j_{X}: X\rightarrow \bfk\frak{M}(X,\,\Omega)$ be the natural embedding and let $\cdot$ be the concatenation product. Then the triple $(\bfk\frak{M}(X,\,\Omega),\, \cdot,\, \Po)$ together with $j_X$ is
the free $\Omega$-operated algebra on $X$.
\mlabel{pp:freew}
\end{prop}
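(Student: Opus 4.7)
The plan is to build the required homomorphism $\bar f:\bfk\frakM(X,\Omega)\to A'$ by induction on the depth filtration $\frakM_0\subseteq \frakM_1\subseteq \cdots$ used in the construction of $\frakM(X,\Omega)$, and then argue uniqueness from the fact that $X$ together with the operators $P_\omega$ generate $\frakM(X,\Omega)$ as an $\Omega$-operated monoid.

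First I would fix an $\Omega$-operated algebra $(A',\Pop)$ and a set map $f:X\to A'$, and construct $\bar f$ level by level on the $\frakM_n$. At the base level, $\frakM_0=M(X)$ is the free monoid on $X$, so the universal property of free monoids (extended by $\bfk$-linearity) produces a unique algebra homomorphism $\bar f_0:\bfk\frakM_0\to A'$ extending $f$. For the inductive step, assume $\bar f_n:\bfk\frakM_n\to A'$ has been constructed as an algebra homomorphism compatible with $f$. On the generating set $X\sqcup(\sqcup_{\omega\in\Omega}\lc\frakM_n\rc_\omega)$ of $\frakM_{n+1}=M\!\left(X\sqcup(\sqcup_{\omega\in\Omega}\lc\frakM_n\rc_\omega)\right)$, define
\begin{equation*}
x\longmapsto f(x)\ (x\in X),\qquad \lc u\rc_\omega\longmapsto P'_\omega(\bar f_n(u))\ (u\in\frakM_n,\ \omega\in\Omega),
\end{equation*}
and then extend multiplicatively and linearly to obtain $\bar f_{n+1}:\bfk\frakM_{n+1}\to A'$ using the universal property of the free monoid $\frakM_{n+1}$. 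By the very definition, $\bar f_{n+1}$ restricts to $\bar f_n$ on $\bfk\frakM_n$ under the natural embedding $i_{n,n+1}$, so the family $\{\bar f_n\}$ is compatible with the direct system and passes to a well-defined $\bfk$-algebra homomorphism $\bar f=\varinjlim\bar f_n$ on the colimit $\bfk\frakM(X,\Omega)=\bigcup_n\bfk\frakM_n$.

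Next I would check that $\bar f$ commutes with the operators $P_\omega$. For $u\in\frakM(X,\Omega)$, pick $n$ with $u\in\frakM_n$; then $P_\omega(u)=\lc u\rc_\omega\in\frakM_{n+1}$ and, by construction of $\bar f_{n+1}$ on generators,
\begin{equation*}
\bar f(P_\omega(u))=\bar f_{n+1}(\lc u\rc_\omega)=P'_\omega(\bar f_n(u))=P'_\omega(\bar f(u)).
\end{equation*}
Linearity extends this to all of $\bfk\frakM(X,\Omega)$, so $\bar f$ is an $\Omega$-operated algebra homomorphism with $\bar f\circ j_X=f$. For uniqueness, suppose $g:\bfk\frakM(X,\Omega)\to A'$ is any $\Omega$-operated algebra homomorphism with $g\circ j_X=f$. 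Then $g$ agrees with $\bar f$ on $X$ and, because both are multiplicative and commute with each $P_\omega$, an induction on depth using the factorization in Eq.~(\mref{eq:wfact}) and the identity $g(\lc u\rc_\omega)=P'_\omega(g(u))$ shows $g=\bar f$ on each $\frakM_n$, hence on the whole space.

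The main obstacle is bookkeeping rather than conceptual depth: one must be careful that the recursive definition on the generators of $\frakM_{n+1}$ is consistent with the inclusion $\frakM_n\hookrightarrow \frakM_{n+1}$, so that the limit homomorphism $\bar f$ is unambiguous, and that the multiplicative extension at each level does not over-determine the values on the bracketed symbols $\lc u\rc_\omega$. This is handled cleanly by the freeness of each $\frakM_n$ as a monoid on the set $X\sqcup(\sqcup_{\omega}\lc\frakM_{n-1}\rc_\omega)$, which guarantees the extension step is both possible and unique at each stage.
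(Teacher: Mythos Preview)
The paper does not supply its own proof of this proposition; it is simply quoted from \cite[Corollary~3.6]{Guo09}. Your argument is correct and is exactly the standard one that reference gives: build $\bar f$ level by level along the depth filtration using, at each stage, the universal property of the free monoid $\frakM_{n+1}=M\big(X\sqcup(\sqcup_{\omega}\lc\frakM_n\rc_\omega)\big)$, then pass to the direct limit and check compatibility with the operators.

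One small point worth tightening: the assertion that $\bar f_{n+1}$ restricts to $\bar f_n$ on $\frakM_n$ is not literally ``by the very definition'' but is itself a short induction on $n$. Indeed, for a generator $\lc u\rc_\omega$ of $\frakM_n$ with $u\in\frakM_{n-1}$, one has $\bar f_{n+1}(\lc u\rc_\omega)=P'_\omega(\bar f_n(u))$ while $\bar f_n(\lc u\rc_\omega)=P'_\omega(\bar f_{n-1}(u))$, and these agree only after invoking the inductive hypothesis $\bar f_n|_{\frakM_{n-1}}=\bar f_{n-1}$. You correctly anticipate this in your closing paragraph, so the gap is cosmetic rather than substantive.
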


Lemma~\mref{lem:propm} and the uniqueness of the free objects in the category of $\Omega$-operated algebras then yield the isomorphism of $\Omega$-operated algebras
\begin{align}
\theta: ({\bfk}\mathfrak{M}(X,\,\Omega), \,\cdot,\, \Po)\cong ({\bfk}\rfs, \,\cdot,\,\Bo),
\mlabel{eq:Isomorphism2}
\end{align}
sending $x\in X$ to $\theta(x):= \bullet_x$.
Comparing Eqs.~(\mref{eq:frec}) and (\mref{eq:wrec}), we see that $\theta$ preserves the filtrations of bracketed words in $\frakM(X,\Omega)$ and forests in $\calf(X,\Omega)$ given by depths:
\begin{equation}
\theta(\frakM_n) =\calf_n, \quad n\geq 0.
\mlabel{eq:depcomp}
\end{equation}

Further, for $w\in \frakM(X,\Omega)$, let $\deg_{td}(w)$, called the {\bf total degree of $w$}, denote the total number (counting multiplicities) of the appearances of elements of $X$ and brackets $\lc \cdot \rc_\omega, \omega\in \Omega,$  in $w$. So $\lc x y \lc x\rc_\alpha z\rc_\alpha$ has $\deg_{td}(w)=6$ since the letters appear four time and the operators appear twice.

For $n\geq 0$, let $\frakM^{(n)}$ denote the subset of $\frakM(X,\Omega)$ with total degree $n$ and let $\frakM_{(n)}$ denote the union $\cup_{k\leq n}\frakM^{(k)}$. Then we have a grading and a filtration
\begin{equation}
\bfk \frakM(X,\Omega)=\oplus_{n\geq 0} \bfk \frakM^{(n)}, \quad
\bfk \frakM_{(n)} \subseteq \bfk \frakM_{(n+1)}, \quad n\geq 0.
\mlabel{eq:wdegfil}
\end{equation}

Since the map $\theta$ sends $x\in X$ to $\bullet_x$ and $\lc w\rc_\omega$ to $B_\omega^+(\theta(w))$, it preserves the degrees: $\deg_{tw}(w)=\deg(\theta(w))$, and the resulting gradings and filtrations.
Thus as a consequence of Lemma~\mref{lem:propm}, we have
\begin{coro}
With the grading and its associated filtration on $\bfk\frakM(X,\Omega)$ defined by the total degree $\deg_{td}$ in Eq.~\eqref{eq:wdegfil}, the free $\Omega$-operated algebra $\bfk \frakM(X,\Omega)$ is an $\Omega$-operated graded algebra and an $\Omega$-operated filtered algebra, isomorphic to the ones for $\bfk\calf(X,\Omega)$ in Lemma~\mref{lem:propm}.
\mlabel{co:wfil}
\end{coro}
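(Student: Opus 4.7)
The plan is to verify the three ingredients required by Definition~\mref{de:ofil} directly in the bracketed-word model, and then to transport the result to $\bfk\calf(X,\Omega)$ along the isomorphism $\theta$ of Eq.~\eqref{eq:Isomorphism2}.

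First I would check that the total degree $\deg_{td}$ is additive under concatenation. Given the unique factorization in Eq.~\eqref{eq:wfact}, the concatenation $uv$ just juxtaposes the letter and bracket occurrences of $u$ and $v$, so $\deg_{td}(uv)=\deg_{td}(u)+\deg_{td}(v)$. This yields $\bfk\frakM^{(m)}\cdot\bfk\frakM^{(n)}\subseteq\bfk\frakM^{(m+n)}$, giving the algebra grading in Eq.~\eqref{eq:wdegfil}, from which the algebra filtration $\{\bfk\frakM_{(n)}\}_{n\geq 0}$ follows automatically.

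Next I would verify the operator condition in Eq.~\eqref{eq:ofil}. For each $\omega\in\Omega$, the map $P_\omega\colon w\mapsto \lc w\rc_\omega$ introduces exactly one new bracket while preserving all letters and inner brackets of $w$, so $\deg_{td}(P_\omega(w))=\deg_{td}(w)+1$. Hence $P_\omega(\bfk\frakM^{(n)})\subseteq\bfk\frakM^{(n+1)}$ and $P_\omega(\bfk\frakM_{(n)})\subseteq\bfk\frakM_{(n+1)}$, showing that $\bfk\frakM(X,\Omega)$ is both an $\Omega$-operated graded algebra and an $\Omega$-operated filtered algebra.

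For the claimed isomorphism with $\bfk\calf(X,\Omega)$, I would show that $\theta$ preserves degrees, i.e.\ $\deg(\theta(w))=\deg_{td}(w)$ for every $w\in\frakM(X,\Omega)$, by induction on the depth filtration $\frakM_n$. The base case $w\in\frakM_0=M(X)$ follows from $\deg(\bullet_x)=1=\deg_{td}(x)$ together with multiplicativity of both sides. For the induction step, the identity $\theta(\lc u\rc_\omega)=B_\omega^+(\theta(u))$, combined with the fact that both $B_\omega^+$ and $\lc\cdot\rc_\omega$ raise their respective degrees by one and that $\theta$ is multiplicative under concatenation, finishes the argument. Consequently $\theta$ bijects $\frakM^{(n)}$ onto $\calf^{(n)}$, and the identifications of the gradings and filtrations stated in the corollary follow at once from Lemma~\mref{lem:ofil}. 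There is no substantial obstacle: the only real content beyond Lemma~\mref{lem:ofil} is the degree compatibility $\deg\circ\theta=\deg_{td}$, which is pure combinatorial bookkeeping.
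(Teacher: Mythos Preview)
Your proposal is correct and matches the paper's own argument, which consists of the sentence preceding the corollary: the paper simply observes that $\theta$ sends $x\mapsto\bullet_x$ and $\lc w\rc_\omega\mapsto B_\omega^+(\theta(w))$, hence preserves degrees, and then invokes Lemma~\mref{lem:ofil}. You have supplied the routine verifications (additivity of $\deg_{td}$ under concatenation, the operator condition, and the inductive check that $\deg\circ\theta=\deg_{td}$) that the paper leaves implicit, but the structure of the argument is the same.
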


\section{Gr\"obner-Shirshov bases and free \mrbas}
\mlabel{sec:freemrb}
In this section we construct free \mrbas from bracketed words and decorated rooted forests by the method of Gr\"{o}bner-Shirshov bases.
We begin with a brief review of \mrbas emphasizing their many connections. We then recall the Composition-Diamond (CD) Lemma for the Gr\"obner-Shirshov bases of operated algebras. With these preparations, the Gr\"obner-Shirshov bases for \mrbas is then obtained. This gives the desired construction of free \mrbas in terms of bracketed words. We finally apply the isomorphism in Eq.~\eqref{eq:Isomorphism2} to give a construction of free \mrbas in terms of decorated rooted forests.

\subsection{\Mrbas }
\mlabel{ss:mrba}
In this subsection, we recall the concept of \mrbas, which generalizes that of Rota-Baxter algebras.

\begin{defn}\mcite{GGZ19}
Let $\Omega$ be a nonempty set and let
$\lambda_\Omega:=(\lambda_\omega)_{\omega\in \Omega}\subseteq \bfk$
be a parameterized family of scalars with index set $\Omega$. More precisely, $\lambda_\Omega$ is a map $\Omega\to \bfk$.
\begin{enumerate}
\item A {\bf matching (multiple) \rba} of weight $\lambda_\Omega$ is a pair $(R, P_\Omega)$ consisting of an algebra $R$ and a family
$P_\Omega:=(P_\omega)_{\omega\in \Omega}$
of linear operators
$P_\omega: R\longrightarrow R, \omega\in \Omega\,,$
that satisfy the {\bf matching Rota-Baxter equation}
\begin{align}
P_\alpha(x)P_{\beta}(y)&=P_{\alpha}(xP_{\beta}(y)) +P_{\beta}(P_{\alpha}(x)y)+\lambda_\beta P_{\alpha}(xy)
\, \tforall x,y \in R, \alpha,\beta\in \Omega\,.
\label{eq:RBid}
\end{align}
When $\lambda_\Omega=\{\lambda\}$ in $(R,\lambda_\Omega)$, that is, when $\lambda_\Omega: \Omega\to \bfk$ is constant, we also call the \match \rba to have weight $\lambda$.

\item Let $(R,\, P_\Omega)$ and $(R',\,{P'}_{\Omega})$ be \mrbas of the same weight $\lambda_\Omega$.
A linear map $\phi : R\rightarrow R'$ is called a {\bf \mrba homomorphism} if $\phi$ is an algebra homomorphism  such that $\phi  P_\omega = P'_\omega \circ\phi$ for all $\omega\in \Omega$.
\end{enumerate}
\end{defn}

To motivate of our study of free matching Rota-Baxter algebras, we list some properties of \mrbas and refer the reader to~\mcite{GGZ19,GGZy,GGL} for further details.
\begin{enumerate}
\item
Any Rota-Baxter algebra of weight $\lambda$ can be viewed as a \mrba
of weight $\lambda$ by taking $\Omega$ to be a singleton.
\item
When $\lambda=0$,
the \match Rota-Baxter equation is Lie compatible in the sense that
\begin{align*}
[P_{\alpha}(x), P_{\beta}(y)]
=P_{\alpha}([x, P_{\beta}(y)])+P_{\beta}([P_{\alpha}(x), y]).
\end{align*}
Here the Lie bracket is taking as the commutator. In the case when $|\Omega|=2$, this has been studied in~\mcite{St}.
\item \cite[Proposition~2.5]{GGZ19} \Match Rota-Baxter algebras provide a solution to the linearity of the set of Rota-Baxter operators on an algebra as follows. Let $(R,\, \Po)$ be a \mrba of weight $\lambda$. Then any finite linear combination
\begin{align*}
P:=\sum_{\omega\in \Omega} k_\omega P_\omega,\,  k_\omega \in \bfk,
\end{align*}
with $k_\omega\in \bfk$ is a Rota-Baxter operator of weight $\lambda \sum_\omega k_\omega$. In particular, if $\sum_\omega k_\omega=1$, then $P$ is a Rota-Baxter algebra of weight $\lambda$.
Thus any element in the linear span $\sum_{\omega\in \Omega} \bfk P_\omega$ of $P_\Omega$ is a Rota-Baxter operator of certain weight.
\item \cite[Corollary~4.5]{GGZ19} \Match Rota-Baxter algebras have a close connection with \match (multiple) pre-Lie algebras introduced by Foissy~\mcite{Foi18}.
Let $(R, \Po)$ be a \mrba of weight $\lambda_\Omega$.  Define
\begin{align*}
x \ast_\omega y:= P_{\omega}(x)y-yP_{\omega}(x)-\lambda_\omega yx\, \text{ for } x,y,z\in R, \omega\in \Omega.
\end{align*}
Then the pair $(R, (\ast_\omega)_{\omega\in \Omega})$ is a \match (multiple) pre-Lie algebra.
\item \cite[Theorem~3.4]{GGZ19} A \match \rba $(R, \,\Po)$ of weight $\lambda_\Omega$ induces a \match dendriform algebra $(R, \, (\prec_{\omega})_{\omega \in \Omega}, (\succ_{\omega})_{\omega \in \Omega})$, where
\begin{equation*}
x\prec_{{\omega}}y := xP_{\omega}(y)+\lambda_\omega xy,\,\, x\succ_{{\omega}}y := P_{\omega}(x)y \, \text{ for }\, x,y \in R, \omega\in \Omega.
\end{equation*}
\item
\cite[Example~2.3]{GGZ19}, \cite{GGL}
Consider the $\RR$-algebra $R:=\mathrm{Cont}(\RR)$ of continuous functions on $\RR$. Let $K_\omega(x,t)$ be a parameterized family of kernels of continuous functions on $\RR^2$ and let
\begin{equation} I_\omega: R\longrightarrow R, \quad
f(x)\mapsto \int_0^x K_\omega(x,t)f(t)\,dt, \quad \omega \in \Omega,
\mlabel{eq:mint}
\end{equation}
be the corresponding family of Volterra integral operators~\mcite{Ze}.
Then when $K_\omega(x,t)$ is independent of $x$, the pair $(R,(I_\omega)_{\omega\in \Omega})$ is a \mrba of weight zero.
\mlabel{ex:int}
\item
\cite[\S~2.2]{GGZ19}
For $r, s\in R\ot R$, let
$$ r_{13} s_{12} - r_{12}s_{23}+r_{23}s_{13}=-\lambda s_{13}$$
be the {\bf polarized associative Yang-Baxter equation} of weight $\lambda$. Then a solution of this equation gives a matching Rota-Baxter operator of weight $\lambda$.
\end{enumerate}

The purpose of this section is to construct free \mrbas. Since by Proposition~\mref{pp:freew}, $\bfk\frakM(X,\Omega)$ is the free $\Omega$-operated algebra on a set $X$, the free \mrba on $X$ is obtained by taking the quotient of $\bfk\frakM(X,\Omega)$ modulo the operated ideal generated by the \mrba relations. More precisely, let $\Id(S)$ be the operated ideal of $\bfk\frakM(X,\Omega)$ generated by the set
\begin{equation}
S:=\left\{\left .\lf x\rf_\alpha\lf y\rf_\beta-\lf x\lf y\rf_\beta\rf_{\alpha}-\lf\lf x\rf_\alpha y\rf_{\beta}-\lambda_\beta\lf xy\rf_{\alpha}\,\right |\,x,y\in \frakM(\Omega,\,X), \alpha, \beta \in \Omega\right\}.
\mlabel{eq:gsw}
\end{equation}
Then the free \mrba $\ncshaw(X,\Omega)$ is given by the quotient $\bfk\frakM(X,\Omega)/\Id(S)$. We will identify a canonical subset of $\frakM(X,\Omega)$ which gives a linear basis of this quotient and express the operation of the \mrba in terms of this basis.

Of course the free \mrba is also given by taking the quotient of the other realization $\bfk \calf(X,\Omega)$ of the free $\Omega$-operated algebra on the set $X$ modulo the \mrba relations: $\bfk\calf(X,\Omega)/\Id(\frakS)$, where $\Id(\frakS)$ is the $\Omega$-operated ideal of $\bfk\calf(X,\Omega)$ generated by
\small{
\begin{equation}
\frakS:=
\left\{B_\alpha^+(F_1)\dl B_\beta^+(F_2)-B_{\alpha}^+\Big(F_1 \dl B_\beta^+(F_2) \Big)-B_{\beta}^+\Big(B_\alpha^+(F_1)\dl F_2\Big)-\lambda_\beta B_{\alpha}^+(F_1\dl F_2)|\alpha, \beta \in \Omega\right\}.
\mlabel{eq:gsf}
\end{equation}
}
We choose to work with $\bfk\frakM(X,\Omega)$ since it provides a simpler context to apply the method of \gsbs, as we will carry out next.

\subsection{Gr\"{o}bner-Shirshov bases of free \mrbas}
In this subsection, we recall the Composition-Diamond Lemma for the $\Omega$-operated (unitary) algebra $\bfk\frak{M}(X,\,\Omega)$ and apply it to construct a linear basis of the free \mrba on a set.

\subsubsection{Composition-Diamond Lemma for free $\Omega$-operated algebras}
For further details on the notations and background, we refer the reader to~\mcite{BCQ10, GG, GSZ}.

Let $X$ be a set and $\Omega$ a nonempty set, $\star \notin X$, and $X^{\star}:=X\sqcup \{\star\}.$
By a {\bf $\star$-bracketed word} on $X$, we mean any bracketed word in $\frakM(\Omega,\, X)^{\star}:=\mathfrak{M}(\Omega,\,X^{\star})$ with exactly one occurrence of $\star$, counting multiplicities.
For $q\in \mathfrak{M}(\Omega,\,X)^{\star}$ and $u\in \mathfrak{M}(\Omega,\,X)$, we define
$$q|_{u}:=q|_{\star\mapsto u}$$
to be the bracketed word on $X$ obtained by replacing the unique occurrence of $\star$ in $q$ by $u$.
For $q\in \mathfrak{M}(\Omega,\,X)^{\star}$ and $s=\Sigma_ic_iq|_{u_i}\in \bfk\mathfrak{M}(\Omega,\,X),$ where $c_i\in \bfk$ and $u_i\in \mathfrak{M}(\Omega,\,X)$, we define
    $$q|_s:=\Sigma_ic_iq|_{u_i},$$
  and extend this notation to any $q\in \bfk\mathfrak{M}^{\star}(\Omega,\,X)$ by linearity. Note that the element $q|_{s}$ is usually not a bracketed word but a bracketed polynomial.

A {\bf monomial order} on $\mathfrak{M}(\Omega,\,X)$ is a well order $\leq$ on $\mathfrak{M}(\Omega,\,X)$ such that
\begin{equation}
u < v \Rightarrow q|_{u} < q|_{v},\, \text{for all}\,\, u,v \in \mathfrak{M}(\Omega,\,X)\,\text{and all}\, q\in
 \mathfrak{M}^{\star}(\Omega,\,X).
 \mlabel{eq:order}
\end{equation}
Here, as usual, we denote $u < v$ if $u\leq v$ but $u\neq v.$ Since $\leq$ is a well order, it follows from Eq.~(\mref{eq:order}) that $1\leq u$ and $u<\lfloor u \rfloor_{\omega}$ for all $u \in \mathfrak{M}(\Omega,\,X)$ and $\omega \in \Omega$.

Let $\leq$ be a monomial order on $\mathfrak{M}(\Omega,\,X)$ and let $f\in \bfk\mathfrak{M}(\Omega,\,X)$.
\begin{enumerate}
\item If $f\notin \bfk$, the unique largest monomial $\bar{f}$ appearing in $f$ is called the {\bf leading bracketed word (monomial)} of $f$.
\item The coefficient of $\bar{f}$ in $f$ is called the {\bf leading coefficient} of $f$, which is denoted by $c(f)$.
\item  If $f\notin \bfk$ and $c(f)=1$, then $f$ is  {\bf monic with respect to the monomial order} $\leq$ and
a subset $S \subset  \bfk\mathfrak{M}(\Omega,\,X)$ is {\bf monic with respect to} $\leq$ if every $s \in S$ is monic with respect to $\leq.$
\end{enumerate}

The notion of a Gr\"obner-Shirshov basis is given in terms of compositions and triviality of compositions, encoding the notion of critical pairs in a rewriting system~\mcite{BN}.

\begin{defn}
Let $f, g \in \bfk\mathfrak{M}(\Omega,\,X)$ be $\Omega$-bracketed polynomials monic with respect to $\leq$. Let $\bar{f}$ be the leading monomial of $f$, and let $|f|$ denote its breadth.
\begin{enumerate}
\item  If there exist $u, v, w \in
\mathfrak{M}(\Omega,\,X)$ such that $w=\bar{f}u=v\bar{g}$ with $\max \big\{|\bar{f}|, |\bar{g}|\big\} < w<|\bar{f}|+|\bar{g}|$, then the $\Omega$-bracketed polynomial
$$(f, g)_{w}:=(f,g)_{u,v,w}:=fu-vg$$
is called the {\bf intersection composition of $f$ and $g$ with respect to} $(u,v)$.
\item If there exist $q \in \mathfrak{M}^{\star}(\Omega,\,X)$ and $w \in \mathfrak{M}(\Omega,\,X)$ such  that
$w=\bar{f}:=(f,g)_{q,w}:=q|_{\bar{g}}$,  then the $\Omega$-bracketed polynomial
$$(f, g)_{w}:=f-q|_{g}$$
is called the {\bf including composition of $f$ and $g$ with respect to} $q$.
\end{enumerate}
In both cases, the bracketed word $w$ is called the {\bf ambiguity} for the compositions.
\end{defn}

Now we arrive at the key notion of a Gr\"obner-Shirshov basis in which the confluncy of critical pairs is captured by a triviality condition.

\begin{defn} Let $S\subseteq \bfk\mathfrak{M}(\Omega,\,X)$
be a set of $\Omega$-bracketed polynomials that is monic with respect to a monomial order $\leq$, and let $w \in \mathfrak{M}(\Omega,\,X).$
\begin{enumerate}
\item
An element $u$ in $\bfk\frakM(X,\Omega)$ is called {\bf trivial modulo} $(S,w)$ if $u$ can be written as a linear combination $\sum_{i} c_{i}q_{i}|_{s_{i}}$ with $0\neq c_{i}\in \bfk, q_{i} \in \mathfrak{M}(\Omega,\,X)^{\star}, s_{i}\in S$ and $q_{i}|_{\bar{s_{i}}}<w.$ Then we denote
$$ u \equiv 0 \text{ mod }\, (S,W).$$
\item
The set $S$ is called a {\bf Gr\"{o}bner-Shirshov bases} (with respect to $\leq$), if for each pair $f, g \in S$ with $f\neq g$, every intersection composition and including composition $(f,g)_{w}$ of $f$ and $g$ is trivial modulo $(S, w).$
\item For $u, v\in \bfk\mathfrak{M}(\Omega,\,X),$ we say $u$ and $v$ are {\bf congruent modulo} $(S, w)$ and denote by
    $u\equiv v\, \text{ mod }\, (S, w)$
    if $u-v$ is trivial modulo $(S,w)$.
\end{enumerate}
\end{defn}

The following theorem is the {\bf Composition-Diamond Lemma for $\Omega$-(unitary) algebras}, adapting from the case for $\Omega$-nonunitary algebras in~\mcite{BCQ10}. See also~\mcite{QC18}.

\begin{theorem}\cite[Theorem~3.13]{GSZ}
Let $X$ be a set and $\Omega$ a nonempty set, and let $\leq$ be a monomial order on $\mathfrak{M}(\Omega,\,X)$. Let $S$ be a set of $\Omega$-bracketed polynomials in $\bfk\mathfrak{M}(\Omega,\,X)$ which are monic with respect to $\leq$ and let $\Id(S)$ be the $\Omega$-operated ideal of $\bfk\mathfrak{M}(\Omega,\,X)$ generated by $S$.
Then the following  statements are equivalent:
\begin{enumerate}
\item $S$ is a Gr\"{o}bner-Shirshov basis in $\bfk\mathfrak{M}(\Omega,\,X).$
\item For every non-zero $f \in \Id(S)$, then $\bar{f}=q |_{\bar{s}}$ for some $q \in \mathfrak{M}(\Omega,\,X)^{\star}$ and $ s\in S.$
\item Let
$$\Irr(S):=\ \left\{w\in \mathfrak{M}(\Omega,\,X)| w \neq q|_{\bar{s}}, q\in \mathfrak{M}(\Omega,\,X)^{\star}, s\in S\right\}
=\ \frakM(X,\Omega)\backslash \{q|_{\bar{s}}\,|\,q\in \mathfrak{M}(\Omega,\,X)^{\star}, s\in S\}.
$$
Then there is a linear decomposition $\bfk\frakM(X,\Omega)=\Id(S)\oplus \Irr(S)$. Thus $\Irr(S)$ modulo $\Id(S)$ is a \bfk-linear basis of $\bfk\mathfrak{M}(\Omega,\,X)/\Id(S).$
\end{enumerate}
\mlabel{them:CD}
\end{theorem}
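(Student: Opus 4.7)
The proof will follow the standard cyclic scheme $(1)\Rightarrow(2)\Rightarrow(3)\Rightarrow(1)$ familiar from Composition--Diamond arguments, adapted from the non-unitary operated version of Bokut--Chen--Qiu.

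For $(1) \Rightarrow (2)$, I would take a nonzero $f \in \Id(S)$ and expand $f = \sum_i c_i\, q_i|_{s_i}$ with $c_i \in \bfk\setminus\{0\}$, $s_i \in S$, and $q_i \in \frakM(X,\Omega)^{\star}$; set $w_i := q_i|_{\overline{s_i}}$, and among all such expressions choose one that first minimizes $W := \max_i w_i$ under $\leq$ and then minimizes the number of indices attaining $W$. If exactly one index attains $W$, then no top cancellation can occur and $\bar f = W = q|_{\bar s}$, as required. Otherwise pick indices $i \neq j$ with $w_i = w_j = W$; the placements of $\overline{s_i}$ and $\overline{s_j}$ inside $W$ give either an intersection composition, an including composition, or a disjoint configuration. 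In the first two cases, triviality of $(s_i,s_j)_W$ modulo $(S,W)$ provided by $(1)$ rewrites $c_i q_i|_{s_i} + c_j q_j|_{s_j}$ as a combination of $q|_s$ terms with top word strictly below $W$, contradicting minimality. The disjoint case is handled by a Shirshov-type separation argument: writing $W = p|_{\overline{s_i},\overline{s_j}}$ for a two-hole $\star$-context $p$, one compares $p|_{s_i,\overline{s_j}}$ and $p|_{\overline{s_i},s_j}$ via $p|_{s_i,s_j}$ and observes that the two routes agree up to terms with top word below $W$, again reducing either $W$ or the count at $W$. This disjoint-case analysis is the main obstacle, since in the operated setting the interaction of two bracketed substitutions inside a common $\star$-word requires careful bookkeeping.

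For $(2) \Rightarrow (3)$, I would establish $\bfk\frakM(X,\Omega) = \Id(S) + \bfk\Irr(S)$ by noetherian induction against the well-order: if $\bar f \in \Irr(S)$ subtract $c(f)\,\bar f$; if $\bar f = q|_{\bar s}$ is reducible, subtract $c(f)\,q|_s \in \Id(S)$. Either way, $\bar f$ strictly drops while the modification stays in the same coset modulo $\Id(S)$, so induction closes. For the direct sum, suppose $0 \neq h \in \Id(S) \cap \bfk\Irr(S)$; every monomial of $h$ then lies in $\Irr(S)$, so does $\bar h$, contradicting $(2)$ applied to $h \in \Id(S)$.

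For $(3) \Rightarrow (1)$, I would take any composition $h = (f,g)_w$; by construction $h \in \Id(S)$ and, if $h \neq 0$, $\bar h < w$. Reduce $h$ step by step: at each stage, pick the largest reducible monomial $w_k$ appearing in the current polynomial $h_k$, write $w_k = q_k|_{\overline{s_k}}$ with $s_k \in S$, and set $h_{k+1} := h_k - c_k\, q_k|_{s_k}$ with $c_k$ the coefficient of $w_k$ in $h_k$; termination is forced since the largest reducible monomial strictly decreases in the well order. The terminal $h_n$ has no reducible monomial and so lies in $\bfk\Irr(S)$, while $h - h_n$ visibly lies in $\Id(S)$, whence the direct sum from $(3)$ forces $h_n = 0$. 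Thus $h = \sum_k c_k\, q_k|_{s_k}$ with every $q_k|_{\overline{s_k}} = w_k \leq \bar h < w$, which is precisely triviality modulo $(S,w)$.
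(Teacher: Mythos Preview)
The paper does not give its own proof of this statement: it is quoted verbatim as \cite[Theorem~3.13]{GSZ}, the Composition--Diamond Lemma for $\Omega$-operated unitary algebras, with the remark that it adapts the nonunitary version of \cite{BCQ10}. So there is nothing in the paper to compare against.

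Your cyclic scheme $(1)\Rightarrow(2)\Rightarrow(3)\Rightarrow(1)$ is the standard route and is correct in outline. One small imprecision in your $(1)\Rightarrow(2)$ step: when $w_i=w_j=W$ and the placements of $\overline{s_i},\overline{s_j}$ overlap, the actual ambiguity $w$ for the composition is in general a proper subword of $W$, so one invokes triviality of $(s_i,s_j)_w$ modulo $(S,w)$ and then applies the outer $\star$-context $p$ with $W=p|_w$ (using the monomial-order compatibility $u<w\Rightarrow p|_u<p|_w$) to conclude that $q_i|_{s_i}-q_j|_{s_j}$ is trivial modulo $(S,W)$. It is this \emph{difference}, not the sum $c_iq_i|_{s_i}+c_jq_j|_{s_j}$, that one rewrites below $W$; one then writes $c_iq_i|_{s_i}+c_jq_j|_{s_j}=(c_i+c_j)q_j|_{s_j}+c_i(q_i|_{s_i}-q_j|_{s_j})$ to drop the count at $W$. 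With that adjustment your sketch is sound and matches the arguments in the cited references.
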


\subsubsection{Gr\"{o}bner-Shirshov bases for free \mrbas}

We now show that the matching Rota-Baxter relations form a Gr\"{o}bner-Shirshov basis of the free $\Omega$-operated algebras $\bfk\mathfrak{M}(\Omega,\,X)$, and hence gives rise to a linear basis of the free \mrba thanks to the Composition-Diamond Lemma in Theorem~\mref{them:CD}.

Let $X$ and $\emptyset \neq \Omega$ be well-ordered sets. For notational convenience, we also denote $P_\omega(u)=\lc u\rc_\omega$. So one appearance of $P_\omega$ in a bracketed work $w\in \frakM(X,\Omega)$ means one appearance of a brackets $\lc \rc_\omega$ in $w$.
For $u=u_1\cdots u_r\in M(X)$ with $u_1,\cdots,u_r\in X$, define $\deg_X(u)=r$ if $u \ne 1$ and $\deg_X(1)=0$.
Extend the well order $\leq$ on $X$
to the {\bf degree lexicographical order} $\leq $  on $M(X)$ by taking, for  any $u=u_1\cdots u_r,v=v_1\cdots v_s \in M(X)\backslash\{1\}$, where $u_1,\cdots,u_r,v_1,\cdots, v_s\in X$,
\begin{equation}
u\leq v\Leftrightarrow
\left\{
\begin{array}{l}
\deg_X(u)<\deg_X(v),\\[5pt]
\text{or}~ \deg_X(u)=\deg_X(v)(=r) ~ \text{and} ~ (u_1, \cdots, u_r) \leq  (v_1, \cdots, v_r) \text{ lexicographically},
\end{array}
\right.
\label{eq:ordermx}
\end{equation}
Here we use the convention that the empty word $1\leq  u$ for all $u\in M(X)$.
Then $\leq$ is a well order on $M(X)$~\cite{BN}.

Further we extend $\leq$ to $\frakM(X,\Omega)$. Applying Eq.~(\mref{eq:wfact}) and grouping adjacent letters in $X$ together, we find that every $u \in \frakM(X)$ may be uniquely written as a product in the form
\begin{equation}
u=u_0 P_{\alpha_1}( \mpu_1) u_1 P_{\alpha_2}(\mpu_2)u_2 \cdots P_{\alpha_r}(\mpu_r) u_r,
\mlabel{eq:u}
\end{equation}
where
$$u_0,\cdots,u_r \in M(X),\, \mpu_1,\cdots, \mpu_r \in \frakM_{n-1}(X)\,\text{ and }\, {\alpha_1}, \cdots, {\alpha_r}\in \Omega.$$
Denote by $\deg_P(u)$ the number of occurrence of $P_\omega = \lfloor\ \rfloor_\omega, \omega\in \Omega$,
and define the $P$-breadth $\brep(u)$ of $u$ to be $r$.  For example, we have
$$u: = x_0 P_{\alpha_1}(x_1) x_2 P_{\alpha_2}(x_3P_{\alpha_3}(x_4)) x_5x_6 = u_0P_{\alpha_1}(\mpu_1)u_1P_{\alpha_2}(\mpu_2)u_2, \quad x_0, \cdots, x_6\in X, \quad \alpha_1,\alpha_2,\alpha_3\in \Omega,$$
where $u_0=x_0, u_1=x_2, u_3=x_5x_6, \mpu_1=x_1, \mpu_2=x_3P_{\alpha_3}(x_4), \deg_P(u) = 3$ and $\brep(u) = 2$.

Let $u,v\in \frakM(X)$ and write them uniquely in the form of Eq.~(\mref{eq:u}):
$$u = u_0 P_{\alpha_1}(\mpu_1) u_1P_{\alpha_2}(\mpu_2)u_2 \cdots P_{\alpha_r}(\mpu_r) u_r\,\text{ and }\,
v=v_0 P_{\beta_1}(\mpv_1) v_1 P_{\beta_2}(\mpv_2)v_2 \cdots P_{\beta_s}(\mpv_s) v_s.$$
We define $u\leqo v $ by induction on $\dep(u) + \dep(v)\geq 0$. For the initial step of $\dep(u) + \dep(v) = 0$, we have $u,v\in M(X)$ and use
the degree lexicographical order given in Eq.~(\mref{eq:ordermx}). For the induction step of $\dep(u) + \dep(v) \geq 1$, we define
\begin{equation*}
u\leqo v \Leftrightarrow
\left\{\begin{array}{ll}
\deg_P(u) < \deg_P(v), \\
\text{ or } \deg_P(u) = \deg_P(v)\, \text{ and }\, \brep(u) < \brep(v),\\
\text{ or } \deg_P(u) = \deg_P(v), \, \brep(u) = \brep(v) (=r)\, \text{ and }\\
 \,(P_{\alpha_1}, \mpu_1,\cdots,P_{\alpha_r}, \mpu_r,u_0,\cdots,u_r) \leq (P_{\beta_1}, \mpv_1,\cdots, P_{\beta_r}, \mpv_r, v_0,\cdots,v_r) \text{ lexicographically.}
\end{array}
\right.
\end{equation*}
Here $P_{\alpha_i}\leq P_{\beta_i}$ is compared by the order on $\Omega$ and
$\mpu_i\leqo \mpv_i$ and $u_i\leqo v_i$ are compared by the induction hypothesis.
With a similar argument to the case of $\leqo$ on $\frakM(X)$~\cite{ZGGS}, the above
defined $\leqo$ is a monomial order on $\frakM(X,\Omega)$.
In fact when $\Omega$ is a singleton, the above defined $\leqo$ is exactly the one given in ~\cite{ZGGS} on $\frakM(X)$. See also~\mcite{QC18}.

\begin{theorem}
With the order $\leq_{\mathrm{db}}$ on $\frakM(\Omega,\,X)$, the set
$$S=\left\{\left .\lf x\rf_\alpha\lf y\rf_\beta-\lf x\lf y\rf_\beta\rf_{\alpha}-\lf\lf x\rf_\alpha y\rf_{\beta}-\lambda_\beta\lf xy\rf_{\alpha}\,\right |\,x,y\in \frakM(\Omega,\,X), \alpha, \beta \in \Omega\right\}$$
is a Gr\"{o}bner-Shirshov basis in $\bfk\mathfrak{M}(\Omega,\,X)$.
\mlabel{thm:GS}
\end{theorem}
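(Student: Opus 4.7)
The plan is to verify the two defining conditions of a Gr\"obner-Shirshov basis with respect to the order $\leqo$: monicity of every $s\in S$, and triviality of every composition of pairs in $S$ modulo $(S,w)$. For monicity, fix $s = s_{x,y,\alpha,\beta}$ and compare its four summands under $\leqo$. Setting $d = \deg_P(x) + \deg_P(y)$, the term $\lf x\rf_\alpha\lf y\rf_\beta$ has $\deg_P = d+2$ and $P$-breadth $2$, while $\lf x\lf y\rf_\beta\rf_\alpha$ and $\lf\lf x\rf_\alpha y\rf_\beta$ have $\deg_P = d+2$ but $P$-breadth $1$, and $\lambda_\beta\lf xy\rf_\alpha$ has $\deg_P = d+1$. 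Since $\leqo$ compares first by $\deg_P$ and then by $\brep$, we conclude $\bar s = \lf x\rf_\alpha\lf y\rf_\beta$ with leading coefficient $1$, so $s$ is monic.

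Next I would classify all ambiguities. Since each leading monomial $\bar s_i = \lf x_i\rf_{\alpha_i}\lf y_i\rf_{\beta_i}$ has top-level breadth $2$, exactly one intersection pattern occurs: $w = \lf x_1\rf_{\alpha_1}\lf x_2\rf_{\alpha_2}\lf y_2\rf_{\beta_2}$ when $y_1 = x_2$ and $\beta_1 = \alpha_2$. The including patterns arise when $\bar s_2$ is substituted inside either the first or the second bracket of $\bar s_1$, i.e., $x_1 = q'|_{\bar s_2}$ or $y_1 = q''|_{\bar s_2}$ for some $q', q''\in \frakM(X,\Omega)^\star$.

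For the intersection composition, I would expand $(s_1, s_2)_w = s_1\lf y_2\rf_{\beta_2} - \lf x_1\rf_{\alpha_1}s_2$, so $w$ cancels and six trailing terms remain, each of the form $\lf A\rf_\mu\lf B\rf_\nu$. Apply $s_{A,B,\mu,\nu}\in S$ to each such term: this drops the top-level breadth from $2$ to $1$ while keeping $\deg_P$ fixed, so every intermediate monomial is strictly $\leqo$-smaller than $w$. After a second layer of reductions eliminating residual $\lf\cdot\rf_\mu\lf\cdot\rf_\nu$ patterns that resurface inside outer brackets, I expect all terms to pair up and cancel. For the including composition with $x_1 = q'|_{\bar s_2}$, setting $q = \lf q'\rf_{\alpha_1}\lf y_1\rf_{\beta_1}$, one gets $(s_1,s_2)_w = s_1 - q|_{s_2} = q|_{T_2} - T_1$, where $T_1, T_2$ denote the trailing summands of $s_1$ and $s_2$. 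Each of the three summands of $T_1$ wraps $q'|_{\bar s_2}$ inside a single outer bracket; applying the $s_2$-rewrite to the embedded $\bar s_2$ produces precisely the matching summand of $q|_{T_2}$, giving three pairwise cancellations. The case $y_1 = q''|_{\bar s_2}$ is symmetric.

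The main obstacle will be the bookkeeping in the intersection case: six initial terms each produce further subterms via nested applications of $S$, and one must verify they cancel in the correct pairing. The design of $\leqo$ via $(\deg_P, \brep, \mathrm{lex})$ is precisely what guarantees every intermediate monomial stays strictly $\leqo$-smaller than $w$, which is essential for the triviality condition to hold rigorously.
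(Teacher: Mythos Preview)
Your overall strategy matches the paper's: classify the ambiguities into one intersection type $w=\lf x\rf_\alpha\lf y\rf_\beta\lf z\rf_\gamma$ and two symmetric including types (the leading term of one relation sitting inside the first or second bracket of the other), then expand each composition and rewrite by elements of $S$ until only $q_i|_{s_i}$ contributions with $q_i|_{\bar s_i}\odb w$ remain. The monicity argument and the enumeration of ambiguities are correct.

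The gap is in your including case. With $x_1=q'|_{\bar s_2}$ and $q=\lf q'\rf_{\alpha_1}\lf y_1\rf_{\beta_1}$, the three summands of $q|_{T_2}$ are each of the form $\lf q'|_{t}\rf_{\alpha_1}\lf y_1\rf_{\beta_1}$, which still has top-level breadth $2$. On the other hand, applying the $s_2$-rewrite to the embedded $\bar s_2$ in, say, the $T_1$-summand $\lf q'|_{\bar s_2}\lf y_1\rf_{\beta_1}\rf_{\alpha_1}$ yields three terms of the form $\lf q'|_{t}\lf y_1\rf_{\beta_1}\rf_{\alpha_1}$, which have top-level breadth $1$. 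These are \emph{not} the summands of $q|_{T_2}$, so your ``three pairwise cancellations'' do not occur. What is actually needed is a second round of rewrites: each of the three $q|_{T_2}$ terms $\lf q'|_{t}\rf_{\alpha_1}\lf y_1\rf_{\beta_1}$ must itself be reduced by the $S$-element $s_{\,q'|_t,\;y_1,\;\alpha_1,\;\beta_1}$. After both sets of rewrites you get nine tail terms from the $T_1$ side and nine from the $q|_{T_2}$ side, and it is these that cancel pairwise, leaving six $q_i|_{s_i}$ contributions that witness triviality modulo $(S,w)$. In short, the including case demands the same two-sided reduction bookkeeping as the intersection case, not the one-step shortcut you propose; the paper carries this out explicitly.
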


\begin{proof}
With the leading terms from $S$ in the form of $\lc x\rc_\alpha \lc y\rc_\beta$, all the possible ambiguities for compositions of $\Omega$-bracketed polynomials in $S$ are of the following three forms.
$$w_1:=\lf x\rf_\alpha\lf y\rf_\beta \lf z\rf_\gamma,\, w_2:=\lf u|_{\lf x\rf_\beta \lf y\rf_\gamma}\rf_\alpha
\lf z\rf_\delta,\, w_3:=\lf z\rf_\delta\lf u|_{\lf x\rf_\beta\lf y\rf_\gamma}\rf_\alpha.$$
where $x, y, z\in \mathfrak{M}(\Omega,\,X)$, $\alpha,\, \beta,\, \gamma,\delta \in \Omega$,  $u \in \mathfrak{M}(\Omega,\,X)^{\star}$. We now check that all these compositions are trivial.

\noindent{\bf Case 1.} $w_1=\lf x\rf_\alpha\lf y\rf_\beta \lf z\rf_\gamma$. In this case, we may write
\begin{align*}
f:=&\ f_{\alpha,\, \beta}(x, y)=\lf x \rf_\alpha\lf y\rf_\beta-\lf x\lf y\rf_\beta\rf_{\alpha}-\lf\lf x\rf_\alpha y\rf_{\beta}-\lambda_\beta \lf xy\rf_{\alpha},\\
g:=&\ g_{\beta,\, \gamma}(y, z) =\lf y\rf_\beta\lf z\rf_\gamma-\lf y\lf z\rf_\gamma\rf_{\beta}-\lf \lf y\rf_\beta z\rf_{\gamma}-\lambda_\gamma \lf yz \rf_{\beta}.
\end{align*}
Then we have
$$\bar{f}=\lf x\rf_\alpha \lf y\rf_\beta\, \text { and } \, \bar{g}=\lf y\rf_\beta\lf z\rf_\gamma.$$
Thus
\begin{align*}
(f, g)_{w_1}=& \ f\lf z\rf_\gamma-\lf x\rf_\alpha g\\
=& \ \lf x\rf_\alpha \lf y\rf_\beta \lf z\rf_\gamma
-\lf x\lf y\rf_\beta\rf_{\alpha}\lf z\rf_\gamma
-\lf \lf x\rf_\alpha y \rf_{\beta}\lf z \rf_\gamma-\lambda_\beta\lf xy \rf_{\alpha}\lf z\rf_\gamma \\
&\ - \lf x \rf_\alpha\lf y \rf_\beta \lf z\rf_\gamma
+\lf x\rf_\alpha \lf y \lf z\rf_\gamma\rf_{\beta}
+ \lf x\rf_\alpha \lf \lf y\rf_\beta z\rf_{\gamma}
+\lambda_\gamma \lf x\rf_\alpha \lf yz\rf_{\beta}\\
=& \ -\lf x\lf y\rf_\beta\rf_{\alpha}\lf z\rf_\gamma
-\lf \lf x\rf_\alpha y \rf_{\beta}\lf z \rf_\gamma
-\lambda_\beta\lf xy \rf_{\alpha}\lf z\rf_\gamma
+\lf x\rf_\alpha \lf y \lf z\rf_\gamma\rf_{\beta} \\
&\ + \lf x\rf_\alpha \lf \lf y\rf_\beta z\rf_{\gamma}
+\lambda_\gamma \lf x\rf_\alpha \lf yz\rf_{\beta}\\
=&\ -f_{\alpha,\, \gamma}(x\lf y\rf_\beta, z)
- \lf x \lf y\rf_\beta \lf z\rf_\gamma\rf_{\alpha}
-\lf \lf x\lf y\rf_\beta\rf_{\alpha} z\rf_{\gamma}
-\lambda_\gamma \lf x\lf y\rf_\beta z\rf_{\alpha}\\
& \ -g_{\beta,\, \gamma}(\lf x \rf_\alpha y, z)
-\lf \lf x\rf_\alpha y\lf z\rf_\gamma\rf_{\beta}
-\lf \lf \lf x \rf_\alpha y\rf_{\beta}z\rf_{\gamma}
-\lambda_\gamma \lf \lf x\rf_\alpha yz \rf_{\beta}\\
&\ - \lambda_\beta f_{\alpha, \gamma}(xy, z)
-\lambda_\beta \lf xy\lf z \rf_\gamma \rf_{\alpha}
-\lambda_\beta \lf \lf xy\rf_{\alpha}z\rf_{\gamma}
-\lambda_\beta \lambda_\gamma \lf xyz \rf_{\alpha}\\
& \ +f_{\alpha,\, \beta}(x, y\lf z\rf_\gamma)
+\lf x\lf y\lf z\rf_\gamma\rf_{\beta}\rf_{\alpha}
+\lf \lf x\rf_\alpha y\lf z\rf_\gamma\rf_{\beta}
+\lambda_\beta \lf xy \lf z\rf_\gamma\rf_{\alpha}\\
&\ + f_{\alpha,\, \gamma}(x, \lf y\rf_\beta z)
+\lf x \lf \lf y\rf_\beta z\rf_{\gamma}\rf_{\alpha}
+\lf \lf x\rf_\alpha \lf y \rf_\beta z\rf_{\gamma}
 +\lambda_\beta \lf x \lf y\rf_\beta z\rf_{\alpha}\\
& \ +\lambda_\gamma f_{\alpha,\, \beta}(x, yz)
+\lambda_\gamma \lf x\lf yz \rf_{\beta}\rf_{\alpha}
+\lambda_\gamma \lf \lf x\rf_\alpha yz\rf_{\beta}
+\lambda_\gamma \lambda_\beta \lf xyz\rf_{\alpha}\\
%
=&\ - \lf x \lf y\rf_\beta \lf z\rf_\gamma\rf_{\alpha}
+\lf x\lf y\lf z\rf_\gamma\rf_{\beta}\rf_{\alpha}
+\lf x \lf \lf y\rf_\beta z\rf_{\gamma}\rf_{\alpha}
+\lambda_\gamma \lf x\lf yz \rf_{\beta}\rf_{\alpha}\\
& \ +\lf \lf x\rf_\alpha \lf y \rf_\beta z\rf_{\gamma}
-\lf \lf x\lf y\rf_\beta\rf_{\alpha} z\rf_{\gamma}
-\lf \lf \lf x \rf_\alpha y\rf_{\beta}z\rf_{\gamma}
-\lambda_\beta \lf \lf xy\rf_{\alpha}z\rf_{\gamma}\\
&\ -f_{\alpha,\, \gamma}(x\lf y\rf_\beta, z)-g_{\beta,\, \gamma}(\lf x \rf_\alpha y, z)-\lambda_\beta f_{\alpha, \gamma}(xy, z)+f_{\alpha,\, \beta}(x, y\lf z\rf_\gamma)
\\
& \
+ f_{\alpha,\, \gamma}(x, \lf y\rf_\beta z)
+\lambda_\gamma f_{\alpha,\, \beta}(x, yz)
\\
=& \  -\lf x\, g_{\beta,\, \gamma}(y,z) \rf_\alpha
+\lf f_{\alpha,\, \beta}(x, y)z \rf_\gamma
-f_{\alpha,\, \gamma}(x\lf y\rf_\beta, z)
-g_{\beta,\, \gamma}(\lf x \rf_\alpha y, z)\\
&\ - \lambda_\beta f_{\alpha, \gamma}(xy, z)
+f_{\alpha,\, \beta}(x, y\lf z\rf_\gamma)
+ f_{\alpha,\, \gamma}(x, \lf y\rf_\beta z)
+\lambda_\gamma f_{\alpha,\, \beta}(x, yz)\\
=& \ -\lf x \star|_{g_{\beta,\, \gamma}(y,z)} \rf_\alpha
+\lf \star|_{f_{\alpha,\, \beta}(x, y)}\, z \rf_\gamma
-\star|_{f_{\alpha,\, \gamma}(x\lf y\rf_\beta, z)}
-\star|_{g_{\beta,\, \gamma}(\lf x \rf_\alpha y, z)}\\
&\ -\lambda_\beta \star |_{ f_{\alpha, \gamma}(xy, z)}
+\star|_{f_{\alpha,\, \beta}(x, y\lf z\rf_\gamma)}
+\star|_{f_{\alpha,\, \gamma}(x, \lf y\rf_\beta z)}
+\lambda_\gamma \star|_{ f_{\alpha,\, \beta}(x, yz)},
\end{align*}
which is trivial modulo  $(S,w_1)$ since
 \begin{align*}
\lf x\, \star|_{\lbar {g_{\beta,\, \gamma}(y,z)}} \rf_\alpha=&\ \lf x \star|_{\lf y\rf_\beta \lf z\rf_\gamma} \rf_\alpha=\lf x \lf y\rf_\beta \lf z\rf_\gamma\rf_{\alpha}<_{{\rm bd}} \lf x\rf_\alpha\lf y\rf_\beta \lf z\rf_\gamma=w_1\\
\lf \star|_{\lbar{f_{\alpha,\, \beta}(x, y)}}\, z \rf_\gamma=&\ \lf  \star|_{\lf x\rf_\alpha \lf y\rf_\beta} z\rf_\gamma=\lf  \lf x\rf_\alpha \lf y\rf_\beta z\rf_{\gamma}<_{{\rm bd}} \lf x\rf_\alpha\lf y\rf_\beta \lf z\rf_\gamma=w_1,\\
\star|_{\lbar{f_{\alpha,\, \gamma}(x\lf y\rf_\beta, z)}}=&\ \star|_{\lf x\lf y\rf_\beta\rf_{\alpha}\lf z\rf_\gamma}=\lf x\lf y\rf_\beta\rf_{\alpha}\lf z\rf_\gamma
<_{{\rm bd}} \lf x\rf_\alpha\lf y\rf_\beta \lf z\rf_\gamma=w_1,\\
\star|_{\lbar{g_{\beta,\, \gamma}(\lf x \rf_\alpha y, z)}}=&\ \star|_{\lf \lf x \rf_\alpha y\rf_{\beta}\lf z\rf_\gamma}=\lf \lf x \rf_\alpha y\rf_{\beta}\lf z\rf_\gamma<_{{\rm bd}} \lf x\rf_\alpha\lf y\rf_\beta \lf z\rf_\gamma=w_1,\\
\star|_{\lbar{ f_{\alpha, \gamma}(xy, z)}}=&\ \star|_{\lf xy \rf_{\alpha}\lf z\rf_\gamma}=\lf xy \rf_{\alpha}\lf z\rf_\gamma<_{{\rm bd}} \lf x\rf_\alpha\lf y\rf_\beta \lf z\rf_\gamma=w_1,\\
\star|_{\lbar{f_{\alpha,\, \beta}(x, y\lf z\rf_\gamma)}}=&\ \star|_{\lf x\rf_\alpha \lf y\lf z\rf_\gamma\rf_{\beta}}=\lf x\rf_\alpha \lf y\lf z\rf_\gamma\rf_{\beta}<_{{\rm bd}} \lf x\rf_\alpha\lf y\rf_\beta \lf z\rf_\gamma=w_1,\\
\star|_{\lbar{f_{\alpha,\, \gamma}(x, \lf y\rf_\beta z)}}=&\ \star|_{\lf x\rf_\alpha \lf \lf y\rf_\beta z\rf_{\gamma}}=\lf x\rf_\alpha \lf \lf y\rf_\beta z\rf_{\gamma}<_{{\rm bd}} \lf x\rf_\alpha\lf y\rf_\beta \lf z\rf_\gamma=w_1,\\
\star|_{\lbar { f_{\alpha,\, \beta}(x, yz)}}=&\ \star|_{\lf x\rf_\alpha\lf yz \rf_{\beta}}=\lf x\rf_\alpha\lf yz \rf_{\beta}<_{{\rm bd}} \lf x\rf_\alpha\lf y\rf_\beta \lf z\rf_\gamma=w_1.
\end{align*}

\noindent{\bf Case 2.} $w_2=\lf u|_{\lf x\rf_\beta \lf y\rf_\gamma}\rf_\alpha\lf z\rf_\delta$. In this case, we may write
\begin{align*}
q:=\ &\lf u \rf_\alpha \lf z\rf_\delta,\,
 g:=\, g_{\beta,\, \gamma}(x, y) =\lf x\rf_\beta\lf y\rf_\gamma-\lf x\lf y\rf_\gamma\rf_{\beta}-\lf \lf x\rf_\beta y\rf_{\gamma}
-\lambda_\gamma \lf xy \rf_{\beta},\\
f:=\ & f_{\alpha,\, \delta}(u|_{\lf x\rf_\beta \lf y\rf_\gamma},\,z)=\lf u|_{\lf x\rf_\beta \lf y\rf_\gamma} \rf_\alpha\lf z\rf_\delta-\lf u|_{\lf x\rf_\beta \lf y\rf_\gamma}\lf z\rf_\delta\rf_{\alpha}-\lf\lf u|_{\lf x\rf_\beta \lf y\rf_\gamma}\rf_\alpha z\rf_{\delta}
-\lambda_\delta \lf u|_{\lf x\rf_\beta \lf y\rf_\gamma}z\rf_{\alpha}.
\end{align*}
Then we have
$$\bar{f}=\lf u|_{\lf x\rf_\beta \lf y\rf_\gamma}\rf_\alpha\lf z\rf_\delta, \, \bar{g}=\lf x\rf_\beta\lf y\rf_\gamma,\, \text{ and } \, \bar{f}=q|_{\bar{g}}.$$
Thus
\begin{align*}
(f, g)_{\omega_2}=& \ f-q|_g\\
=& \ \lf u|_{\lf x\rf_\beta\lf y\rf_\gamma}\rf_\alpha \lf z\rf_\delta
-\lf u|_{\lf x\rf_\beta\lf y\rf_\gamma}\lf z\rf_\delta\rf_{\alpha}
-\lf \lf u|_{\lf x\rf_\beta\lf y\rf_\gamma} \rf_\alpha z\rf_{\delta}
-\lambda_\delta \lf u|_{\lf x\rf_\beta \lf y\rf_\gamma}z\rf_{\alpha}\\
& \ -\lf u|_{\lf x\rf_\beta\lf y\rf_\gamma}\rf_\alpha \lf z\rf_\delta
+\lf u|_{\lf x\lf y\rf_\gamma\rf_{\beta}}\rf_\alpha \lf z\rf_\delta
+\lf u|_{\lf \lf x\rf_\beta y\rf_{\gamma}}\rf_\alpha\lf z\rf_\delta
+\lambda_\gamma \lf u|_{\lf xy\rf_{\beta}}\rf_\alpha\lf z\rf_\delta\\
=& \ -\lf u|_{\lf x\rf_\beta\lf y\rf_\gamma}\lf z\rf_\delta\rf_{\alpha}
-\lf \lf u|_{\lf x\rf_\beta\lf y\rf_\gamma} \rf_\alpha z\rf_{\delta}
-\lambda_\delta \lf u|_{\lf x\rf_\beta \lf y\rf_\gamma}z\rf_{\alpha}
 +\lf u|_{\lf x\lf y\rf_\gamma\rf_{\beta}}\rf_\alpha \lf z\rf_\delta\\
&\ +\lf u|_{\lf \lf x\rf_\beta y\rf_{\gamma}}\rf_\alpha\lf z\rf_\delta
+\lambda_\gamma \lf u|_{\lf xy\rf_{\beta}}\rf_\alpha\lf z\rf_\delta\\
=& \
-\lf u|_{g_{\beta,\, \gamma}(x, y)}\lf z\rf_\delta\rf_{\alpha}
- \lf u|_{\lf x\lf y\rf_\gamma\rf_{\beta}}\lf z\rf_\delta\rf_{\alpha}
-\lf u|_{\lf \lf x\rf_\beta y\rf_{\gamma}}\lf z\rf_\delta\rf_{\alpha}
- \lambda_\gamma \lf u|_{\lf xy\rf_{\beta}}\lf z\rf_\delta\rf_{\alpha}\\
& \
-\lf \lf u|_{g_{\beta,\, \gamma}(x, y)}\rf_\alpha z\rf_{\delta}
- \lf \lf u|_{\lf x\lf y\rf_\gamma\rf_{\beta}}\rf_\alpha z\rf_{\delta}
- \lf \lf u|_{\lf \lf x \rf_\beta y\rf_{\gamma}}\rf_\alpha z\rf_{\delta}
- \lambda_\gamma \lf \lf u|_{\lf xy\rf_{\beta}}\rf_\alpha z\rf_{\delta}\\
&\
-\lambda_\delta \lf u|_{g_{\beta,\, \gamma}(x, y)}z\rf_{\alpha}
-\lambda_\delta \lf u|_{\lf x\lf y\rf_\gamma\rf_{\beta}}z\rf_{\alpha}
-\lambda_\delta \lf u|_{\lf \lf x \rf_\beta y\rf_{\gamma}}z\rf_{\alpha}
-\lambda_\delta \lambda_\gamma \lf u|_{\lf xy\rf_\beta}z\rf_{\alpha} \\
&\
+ f_{\alpha,\, \delta}(u|_{\lf x\lf y\rf_\gamma\rf_{\beta}}, z)
+ \lf u|_{\lf x\lf y\rf_\gamma\rf_{\beta}}\lf z\rf_\delta\rf_{\alpha}
+ \lf \lf u|_{\lf x\lf y\rf_\gamma\rf_{\beta}}\rf_\alpha z\rf_{\delta}
 + \lambda_\delta \lf u|_{\lf x\lf y\rf_\gamma\rf_{\beta}}z\rf_{\alpha} \\
 &\
+ f_{\alpha,\,\delta}(u|_{\lf \lf x\rf_\beta y\rf_{\gamma}}, z)
+\lf u|_{\lf \lf x\rf_\beta y\rf_{\gamma}}\lf z\rf_\delta\rf_{\alpha}
+ \lf \lf u|_{\lf \lf x \rf_\beta y \rf_{\gamma}}\rf_\alpha z\rf_{\delta}
+\lambda_\delta \lf u|_{\lf \lf x\rf_\beta y\rf_{\gamma}}z\rf_{\alpha} \\
& \
 + \lambda_\gamma f_{\alpha,\, \delta}(u|_{\lf xy\rf_{\beta}}, z)
 +\lambda_\gamma \lf u|_{\lf xy\rf_{\beta}} \lf z\rf_\delta\rf_{\alpha}
 + \lambda_\gamma \lf \lf u|_{\lf xy\rf_{\beta}}\rf_\alpha z\rf_{\delta}
 + \lambda_\gamma\lambda_\delta \lf u|_{\lf xy\rf_{\beta}}z\rf_{\alpha}\\
=& \ -\lf u|_{g_{\beta,\, \gamma}(x, y)}\lf z\rf_\delta\rf_{\alpha}
-\lf \lf u|_{g_{\beta,\, \gamma}(x, y)}\rf_\alpha z \rf_{\delta}
-\lambda_\delta \lf u|_{g_{\beta,\, \gamma}(x, y)} z\rf_{\alpha}
+ f_{\alpha,\, \delta}(u|_{\lf x\lf y\rf_\gamma\rf_{\beta}},\, z)\\
&\ + f_{\alpha,\, \delta}(u|_{\lf \lf x \rf_\beta y\rf_{\gamma}},\, z)
 + \lambda_\gamma f_{\alpha,\, \delta}(u|_{\lf xy\rf_{\beta}},\, z)\\
=& \ -\star|_{\lf u|_{g_{\beta,\, \gamma}(x, y)}\lf z\rf_\delta\rf_{\alpha}}
-\star|_{\lf u|_{g_{\beta,\, \gamma}(x, y)}\rf_\alpha z \rf_{\delta}}
-\lambda_\delta\star|_{ \lf u|_{g_{\beta,\, \gamma}(x, y)} z\rf_{\alpha}}
+\star|_{f_{\alpha,\, \delta}(u|_{\lf x\lf y\rf_\gamma\rf_{\beta}},\, z)}\\
& \ +\star|_{f_{\alpha,\, \delta}(u|_{\lf \lf x \rf_\beta y\rf_{\gamma}}, z)}
+\lambda_\gamma \star|_{ f_{\alpha,\, \delta}(u|_{\lf xy\rf_{\beta}},\, z)},
\end{align*}
which is trivial modulo $(S, w_2)$ since
\begin{align*}
\star|_{\lbar{\lf u|_{f_{\beta,\, \gamma}(x, y)}\lf z\rf_\delta\rf_{\alpha}}}&= \star|_{\lf u|_{\lf x\rf_\beta\lf y\rf_\gamma}\lf z \rf_\delta\rf_{\alpha}}
=\lf u|_{\lf x\rf_\beta\lf y\rf_\gamma}\lf z \rf_\delta\rf_{\alpha}<_{{\rm bd}} \lf u|_{\lf x\rf_\beta \lf y\rf_\gamma}\rf_\alpha
\lf z\rf_\delta=w_2,\\
\star|_{\lbar{\lf u|_{f_{\beta,\, \gamma}(x, y)}\rf_\alpha z \rf_{\delta}}}&=\star|_{\lf u|_{\lf x\rf_\beta\lf y\rf_\gamma} z\rf_{\delta}}=\lf u|_{\lf x\rf_\beta\lf y\rf_\gamma} z\rf_{\delta}<_{{\rm bd}} \lf u|_{\lf x\rf_\beta \lf y\rf_\gamma}\rf_\alpha
\lf z\rf_\delta
=w_2, \\
\star|_{\lbar{ \lf u|_{f_{\beta,\, \gamma}(x, y)} z\rf_{\alpha}}}&=\star|_{\lf u|_{\lf x \rf_\beta\lf y\rf_\gamma} z\rf_{\alpha}}= \lf u|_{\lf x \rf_\beta\lf y\rf_\gamma} z\rf_{\alpha}
<_{{\rm bd}} \lf u|_{\lf x\rf_\beta \lf y\rf_\gamma}\rf_\alpha
\lf z\rf_\delta=w_2,\\
\star|_{\lbar{f_{\alpha,\, \delta}(u|_{\lf x\lf y\rf_\gamma\rf_{\beta}},\, z)}}&=
\star|_{\lf u|_{\lf x\lf y\rf_\gamma\rf_{\beta}}\rf_\alpha \lf z \rf_\delta}= \lf u|_{\lf x\lf y\rf_\gamma\rf_{\beta}}\rf_\alpha \lf z \rf_\delta <_{{\rm bd}} \lf u|_{\lf x\rf_\beta \lf y\rf_\gamma}\rf_\alpha
\lf z\rf_\delta=w_2, \\
\star|_{\lbar{f_{\alpha,\, \delta}(u|_{\lf \lf x \rf_\beta y\rf_{\gamma}}, z)}}&=\star|_{\lf u|_{\lf \lf x\rf_\beta y\rf_{\gamma}}\rf_\alpha \lf z\rf_\delta}=\lf u|_{\lf \lf x\rf_\beta y\rf_{\gamma}}\rf_\alpha \lf z\rf_\delta<_{{\rm bd}} \lf u|_{\lf x\rf_\beta \lf y\rf_\gamma}\rf_\alpha
\lf z\rf_\delta=w_2,\\
\star|_{\lbar{ f_{\alpha,\, \delta}(u|_{\lf xy\rf_{\beta}},\, z)}}&= \star|_{\lf u|_{\lf xy \rf_{\beta}}\rf_\alpha \lf z\rf_\delta}
=\lf u|_{\lf xy \rf_{\beta}}\rf_\alpha \lf z\rf_\delta <_{{\rm bd}} \lf u|_{\lf x\rf_\beta \lf y\rf_\gamma}\rf_\alpha
\lf z\rf_\delta=w_2.
\end{align*}
\noindent{\bf Case 3.} $w_3=\lf z\rf_\delta\lf u|_{\lf x\rf_\beta\lf y\rf_\gamma}\rf_\alpha$. The proof is similar to that of Case 2.

This completes the proof.
\end{proof}

\subsection{Construction of free matching Rota-Baxter algebras on a set}
Now we apply Theorems~\mref{them:CD} and \mref{thm:GS} to construct free matching Rota-Baxter algebras on a set.

Let $X$ be a set. We first describe the set $\mathrm{Irr}(S)$, where
$$S=\left\{\left . \lf x\rf_\alpha\lf y\rf_\beta-\lf x\lf y\rf_\beta\rf_{\alpha}-\lf\lf x\rf_\alpha y\rf_{\beta}-\lambda_\beta\lf xy\rf_{\alpha}\,\right |\, x,y\in \frakM(\Omega,\,X), \alpha, \beta \in \Omega\right\}.$$
The set $\mathrm{Irr}(S)$ will give a linear basis of the free matching Rota-Baxter algebras on the set $X$.

By Theorems~\mref{them:CD} and \mref{thm:GS}, the set consists of bracketed words in $\frakM(X,\Omega)$ that {\em do not} contain subwords of the form $\lc u\rc_\alpha \lc v\rc_\beta$ for any $u, v\in \frakM(X,\Omega)$ and $\alpha,\beta\in \Omega$. As in the case of one operator~\mcite{Gub}, we give a description of $\Irr(S)$ by inclusion conditions, rather than the above exclusive conditions.
For subsets $U, V\subseteq \frakM(X,\Omega)$ and $r\geq 1$, we use the abbreviations
$$ UV:=\{uv\,|\, u\in U, v\in V\}, \quad U^r:=\{u_1\cdots u_r\,|\, u_i\in U, 1\leq i\leq r\},\quad \lc U\rc_\Omega:=\{\lc u\rc_\omega\,|\, u\in U, \omega\in \Omega\}.$$

\begin{defn}
Let $Y,Z$ be subsets of $\mathfrak{M}(\Omega,\,X)$. Define the {\bf alternating products} of $Y$ and $Z$  by
\begin{align}
\Lambda(Y,Z):&=
\left(\bigcup_{r\geq1}(Y\lfloor Z\rfloor_\Omega)^{r}\right)\bigcup
\left(\bigcup_{r\geq0}(Y\lfloor Z\rfloor_\Omega)^rY\right)\,\bigcup
\left(\bigcup_{r\geq1}(\lfloor Z\rfloor_\Omega Y)^{r}\right)\bigcup
\left(\bigcup_{r\geq0}(\lfloor Z\rfloor_\Omega Y)^r\lfloor Z\rfloor_\Omega\right)\bigcup
\left\{1\right\},\nonumber
\label{eq:alternating}
\end{align}
where $1$ is the identity in $\mathfrak{M}(\Omega,\,X)$.
\end{defn}

We observe that $\Lambda(Y,Z)\subseteq\mathfrak{M}(\Omega,\,X)$. Then we recursively define
$$\frak X_{0}:=M(X)=S(X)\cup\{1\}\,\text{ and }\,\frak X_{n}:=\Lambda(S(X),\frak X_{n-1}),\, n\geq1.$$
Thus $\frak X_{0}\subseteq\cdots\subseteq\frak X_{n}\subseteq\cdots.$
Finally we define $$\frak X_{\infty}:=\dirlim\frak X_{n} =\bigcup_{n\geq 0} \frak X_{n}.$$
Elements in $\frak X_{\infty}$
are called {\bf \match Rota-Baxter words} (MRBWs).
For a MRBW $w\in\frak X_{\infty}$, we call $\dep(w):=\min\{n\mid w\in \frak X_{n}\}$ the {\bf depth} of $w$, which agrees with the depth of $w$ as an element of $\frakM(X,\Omega)$ in Section~\mref{ss:freeoa}.

The following properties of MRBWs are easy to verify as in the case of one operator~\mcite{Gub}.

\begin{lemma}\mlabel{lem:wprod}
Every MRBW $w\neq 1$ has a unique {\bf alternating decomposition:} $w=w_1\cdots w_m$, where $w_i\in X\cup \lfloor \frakX_\infty\rfloor_{\Omega} $, $1\leq i\leq m$, $m\geq 1,$ and no consecutive elements in the sequence $w_1,\cdots, w_m$ are in $\lfloor \frakX_\infty\rfloor_{\Omega}$.
\end{lemma}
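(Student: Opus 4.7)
The plan is to prove existence by induction on the depth $\dep(w)$ using the recursive definition $\frakX_n = \Lambda(S(X), \frakX_{n-1})$ for $n\geq 1$, and to obtain uniqueness from the unique factorization in the free $\Omega$-operated monoid $\frakM(X,\Omega)$ recorded in Eq.~\eqref{eq:wset}.

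For existence, the base case is $w\in \frakX_0\setminus\{1\}=S(X)$. Such a $w$ is a nonempty word in $M(X)$, so its unique factorization $w=x_1\cdots x_m$ with $x_i\in X$ gives a valid alternating decomposition: every $w_i=x_i$ lies in $X\subseteq X\cup\lfloor\frakX_\infty\rfloor_\Omega$, and no $w_i$ is bracketed, so the non-consecutivity condition holds vacuously. For the inductive step $\dep(w)=n\geq 1$, we have $w\in\Lambda(S(X),\frakX_{n-1})$. Consulting the four clauses of the definition of $\Lambda(Y,Z)$ with $Y=S(X)$ and $Z=\frakX_{n-1}$, the element $w$ is a concatenation $Y_0Z_1Y_1Z_2\cdots$ that begins or ends with either a $Y$-block $Y_i\in S(X)$ or a $Z$-block $Z_j\in\lfloor\frakX_{n-1}\rfloor_\Omega\subseteq\lfloor\frakX_\infty\rfloor_\Omega$, with $Y$-blocks and $Z$-blocks strictly alternating. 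Splitting each $Y_i=x_{i,1}\cdots x_{i,k_i}$ in $M(X)$ with $k_i\geq 1$ and $x_{i,j}\in X$ yields a factorization of $w$ into elements of $X\cup\lfloor\frakX_\infty\rfloor_\Omega$. Between any two bracketed factors $Z_j$, $Z_{j+1}$ there sits a nonempty $Y$-block, which contributes at least one letter $x_{i,j}\in X$, so no two consecutive factors of the produced decomposition lie in $\lfloor\frakX_\infty\rfloor_\Omega$. This establishes existence.

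For uniqueness, recall from Eq.~\eqref{eq:wset} that
\[
\frakM(X,\Omega)=M\!\left(X\sqcup\left(\sqcup_{\omega\in\Omega}\lfloor\frakM(X,\Omega)\rfloor_\omega\right)\right),
\]
i.e.\ $\frakM(X,\Omega)$ is the free monoid on the set $X\sqcup(\sqcup_\omega\lfloor\frakM(X,\Omega)\rfloor_\omega)$. Each factor $w_i$ in a putative decomposition $w=w_1\cdots w_m$ with $w_i\in X\cup\lfloor\frakX_\infty\rfloor_\Omega\subseteq X\sqcup(\sqcup_\omega\lfloor\frakM(X,\Omega)\rfloor_\omega)$ is a single generator of this free monoid, so the unique factorization property of free monoids forces the sequence $(w_1,\ldots,w_m)$ to be uniquely determined by $w$.

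The only delicate point — and the main obstacle — is verifying, in the inductive step, that the alternation really survives after one refines each $S(X)$-block into individual letters of $X$; this is the place one must use that $S(X)$ consists of \emph{nonempty} words, so that the buffer of at least one $X$-letter between successive bracketed pieces never disappears. Once that is in hand, the rest of the argument is just a bookkeeping check against the four cases in the definition of $\Lambda(Y,Z)$.
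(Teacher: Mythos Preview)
Your proof is correct. The paper does not actually supply a proof of this lemma; it only remarks that the statement is ``easy to verify as in the case of one operator'' and cites \mcite{Gub}. Your argument---induction on $\dep(w)$ using the recursive description $\frakX_n=\Lambda(S(X),\frakX_{n-1})$ for existence, and the free-monoid identity~\eqref{eq:wset} for uniqueness---is precisely the standard route taken in that reference, adapted in the obvious way to multiple bracket types.
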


Recall that $\bfk\mathfrak{M}(\Omega,\,X)/\Id(S)$ is a free \mrba on the set $X$, and has a linear basis $\Irr(S)$ by Theorems~\mref{them:CD} and~\mref{thm:GS}. Thus with the evident identification of $\Irr(S)$ with $\frakX_\infty$ thanks to Lemma~\mref{lem:wprod}, we obtain

\begin{theorem}
The set $\Irr(S)=\frak X_{\infty}$ modulo $\Id(S)$ is a linear basis of the free \mrba $\bfk\mathfrak{M}(\Omega,\,X)/\Id(S)$ of weight $\lambda_\Omega$ on $X$.
 \mlabel{them:FreeRB}
\end{theorem}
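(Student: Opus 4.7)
The plan is to combine the universal property of the free $\Omega$-operated algebra $\bfk\mathfrak{M}(\Omega,X)$ established in Proposition~\mref{pp:freew} with the Composition--Diamond machinery assembled in Theorems~\mref{them:CD} and~\mref{thm:GS}. The statement really contains two assertions, which I will address in turn: (i) the quotient $\bfk\mathfrak{M}(\Omega,X)/\Id(S)$ carries the free \mrba structure on $X$, and (ii) the set $\Irr(S)$ descends to a $\bfk$-linear basis of this quotient and coincides with $\frak X_\infty$.

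For (i), I would verify the universal property directly. Given any \mrba $(R,(P'_\omega)_{\omega\in \Omega})$ of weight $\lambda_\Omega$ and any set map $f:X\to R$, Proposition~\mref{pp:freew} supplies a unique $\Omega$-operated algebra homomorphism $\tilde f:\bfk\mathfrak{M}(\Omega,X)\to R$ extending $f$. The matching Rota--Baxter identity \eqref{eq:RBid} forces $\tilde f$ to annihilate every generator listed in \eqref{eq:gsw}; since $\tilde f$ is an $\Omega$-operated algebra homomorphism, it then annihilates the entire operated ideal $\Id(S)$. Hence $\tilde f$ descends to a unique \mrba homomorphism on the quotient, which establishes the required universal property.

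For (ii), Theorem~\mref{thm:GS} guarantees that $S$ is a \gsb with respect to $\leqo$, so Theorem~\mref{them:CD}(c) yields the decomposition $\bfk\mathfrak{M}(\Omega,X)=\Id(S)\oplus \bfk\Irr(S)$; the image of $\Irr(S)$ in the quotient is therefore a $\bfk$-linear basis. It then remains to identify $\Irr(S)$ with $\frak X_\infty$. Since the leading monomials of the elements of $S$ are precisely the bracketed words of the form $\lc u\rc_\alpha\lc v\rc_\beta$, a bracketed word lies in $\Irr(S)$ if and only if it contains no such adjacent pair of bracketed factors at any level of nesting. I would verify $\Irr(S)\cap \mathfrak M_n=\frak X_n$ by induction on depth: the base case $n=0$ gives $M(X)=\frak X_0$; in the inductive step, an element of depth $n$ in $\Irr(S)$ has a unique alternating decomposition $w=w_1\cdots w_m$ as in Lemma~\mref{lem:wprod}, with each bracketed factor $\lc u_i\rc_{\omega_i}$ satisfying $u_i\in \Irr(S)\cap\mathfrak M_{n-1}=\frak X_{n-1}$ by induction, while the requirement that consecutive $w_i,w_{i+1}$ are not both bracketed is precisely the alternating pattern defining $\Lambda(S(X),\frak X_{n-1})$.

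I expect the main work to be the bookkeeping in identifying $\Irr(S)$ with $\frak X_\infty$, in particular handling the recursion on depth cleanly so that the outer alternating structure (governed by $\Lambda$) and the inner structure (governed by the induction hypothesis) are correctly separated. The remaining ingredients---the universal factorization and the basis conclusion---are direct applications of Proposition~\mref{pp:freew}, Theorem~\mref{thm:GS}, and Theorem~\mref{them:CD}.
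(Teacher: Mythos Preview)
Your proposal is correct and follows essentially the same route as the paper: the paper likewise obtains the free \mrba as the quotient $\bfk\mathfrak{M}(\Omega,X)/\Id(S)$ (stated in Section~\mref{ss:mrba} via the universal property of Proposition~\mref{pp:freew}), invokes Theorems~\mref{them:CD} and~\mref{thm:GS} to conclude that $\Irr(S)$ is a basis, and then appeals to Lemma~\mref{lem:wprod} for the ``evident identification'' of $\Irr(S)$ with $\frak X_\infty$. Your inductive argument on depth simply spells out that last identification in more detail than the paper does.
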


Thus the quotient map
$\varphi: \bfk\mathfrak{M}(\Omega,\,X) \to \bfk\mathfrak{M}(\Omega,\,X)/\Id(S)$
of $\Omega$-operated algebras becomes the projection
\begin{equation}
 \varphi: \bfk\mathfrak{M}(\Omega,\,X) = \Id(S)\oplus \bfk \frakX_\infty\longrightarrow \bfk \frakX_\infty.
\mlabel{eq:phidef}
\end{equation}

Let $\ncshaw(X,\Omega):=\bfk\frak X_{\infty}$ be the resulting free \mrba on $X$ with its matching Rota-Baxter operators $P_\omega, \omega\in \Omega$ and multiplication $\dw$.
We first note the inclusions $\lc \frakX_\infty \rc_\alpha \subseteq \frakX_\infty, \alpha\in \Omega$. Thus the matching Rota-Baxter operator $P_\omega$ on $\bfk\frakX_\infty$ is simply
\begin{equation}
P_\omega(w)=\lc w\rc_\omega.
\mlabel{eq:wop}
\end{equation}

For the product $\dw$ in $\ncshaw(X,\Omega)$, we have the following algorithm in analog to those for the product of two Rota-Baxter words (resp. Rota-Baxter system words) in the free Rota-Baxter algebra~\mcite{Gub} (resp. free Rota-Baxter system~\mcite{QC18}).
\begin{algorithm}
Let $w, w'\in \frak X_{\infty}$. We define the product $w\dw  w'$ inductively on the sum of depths $n:=\dep(w)+\dep(w')\geq 0$.
\begin{enumerate}
\item If $n=0$, then $w,w'\in\frak X_{0}=M(X)$ and define $w\dw  w':=ww'$.
\item Let $k\geq 0$. Suppose that $w\dw  w'$ have been defined for $0\leq n\leq k$ and consider
the case of $n=k+1$.
We need to consider the following two cases.

\noindent{\bf Case 1.}  $\bre(w),\bre(w')\leq 1$. We define
\begin{equation}
\begin{aligned}
w\dw  w'=\left\{\begin{array}{ll}
\lc \lbar{w}\dw  w'\rc_\alpha+\lc w\dw  \lbar{w}'\rc_\beta+\lambda_\beta \lc\lbar{w}\dw \lbar{w}'
\rc_\alpha,&\text{ if } w=\lc\lbar{w}\rc_\alpha \,\text{ and }\, w'=\lc\lbar{w}'\rc_\beta,\\
ww',&\text{otherwise},
\end{array}\right.
\end{aligned}
\mlabel{eq:Bdia}
\end{equation}
where $\alpha, \beta \in \Omega$.

\noindent{\bf Case 2.} $\bre(w)\geq2$ or $\bre(w')\geq2$. Let $w=w_{1}\cdots w_{m}$ and $w'=w'_{1}\cdots w'_{m'}$ be the alternating decompositions of $w$ and $w'$, respectively. Define
\begin{equation}
w\dw  w':=w_{1}\cdots w_{m-1} (w_{m}\dw  w'_{1})w'_{2}\cdots w'_{m'},
\mlabel{eq:cdiam}
\end{equation}
where $w_{m}\dw  w'_{1}$ is defined by Eq.~(\mref{eq:Bdia}) and the rest of the products are
given by the concatenation.
\end{enumerate}
\end{algorithm}

Different from the filtration defined by the depth, the grading $\frakM^{(n)}$ and filtration $\frakM_{(n)}$ on $\frakM(X,\Omega)$ defined by the total degree $\deg_{td}$ in Eq.~(\mref{eq:wdegfil}) restrict to those on $\frakX_\infty$:
\begin{equation}
\frakX^{(n)}:=\frakM^{(n)}\cap \frakX_\infty, \quad \frakX_{(n)}:=\frakM_{(n)}\cap \frakX_\infty, \quad n\geq 0.
\mlabel{eq:rbdegfil}
\end{equation}
The resulting grading $\ncshaw(X,\Omega)=\oplus_{n\geq 0} \bfk \frakX^{(n)}$ holds only linearly since the multiplication does not preserve the grading. However, we have the following compatibility for the filtered structures.

\begin{prop}\mlabel{pp:wfil}
The free \mrba $\ncshaw(X,\Omega)$, with the filtration $${\ncshaw}(X,\Omega)_{(n)}:=\bfk \frakX_{(n)}, n\geq 0,$$ is an $\Omega$-operated filtered algebra as defined in Definition~\mref{de:ofil}$:$
\begin{equation}
P_\alpha(\bfk \frakX_{(p)}) \subseteq \bfk \frakX_{(p+1)}, \quad (\bfk \frakX_{(p)}) \dw (\bfk \frakX_{(q)}) \subseteq \bfk \frakX_{(p+q)}, \quad p, q\geq 0.
\mlabel{eq:wfil}
\end{equation}
Moreover, the homomorphism $\varphi: \bfk\frakM(X,\Omega)\to \ncshaw(X,\Omega)$ of $\Omega$-operated algebras preserves the filtrations$:$
\begin{equation}
\varphi(\bfk\frakM(X,\Omega)_{(n)})\subseteq \bfk\frakX_{(n)}, n\geq 0.
\mlabel{eq:wfilhom}
\end{equation}
\end{prop}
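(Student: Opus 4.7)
The first inclusion $P_\alpha(\bfk\frakX_{(p)})\subseteq \bfk\frakX_{(p+1)}$ is immediate from Eq.~(\mref{eq:wop}): since $P_\alpha(w)=\lc w\rc_\alpha$ for any MRBW $w$, we have $\deg_{td}(P_\alpha(w))=\deg_{td}(w)+1$, so $P_\omega$ shifts the filtration by exactly one.

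For the product bound, the plan is to show by induction on $n:=\dep(w)+\dep(w')$ that $w\dw w'\in \bfk\frakX_{(\deg_{td}(w)+\deg_{td}(w'))}$ for all MRBWs $w,w'\in\frakX_\infty$. In the base case $n=0$ we have $w,w'\in M(X)$ and $w\dw w'=ww'$ is the concatenation, which is additive on $\deg_{td}$. For the induction step, Case~2 of the algorithm reduces to Case~1 by concatenation of factors (which again is additive on $\deg_{td}$), so the essential content is Case~1, in which $w=\lc\lbar w\rc_\alpha$ and $w'=\lc\lbar w'\rc_\beta$. Here I would use $\deg_{td}(\lbar w)=\deg_{td}(w)-1$ and $\deg_{td}(\lbar w')=\deg_{td}(w')-1$ together with the inductive hypothesis applied to the pairs $(\lbar w,w')$ and $(w,\lbar w')$, both of strictly smaller depth sum, to conclude that $\lbar w\dw w'$ and $w\dw\lbar w'$ lie in $\bfk\frakX_{(\deg_{td}(w)+\deg_{td}(w')-1)}$, and the outer bracket adds one. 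For the weight term $\lambda_\beta\lc\lbar w\dw\lbar w'\rc_\alpha$, induction gives $\lbar w\dw\lbar w'\in\bfk\frakX_{(\deg_{td}(w)+\deg_{td}(w')-2)}$ and the outer bracket produces degree $\leq \deg_{td}(w)+\deg_{td}(w')-1$. Hence each term in Eq.~(\mref{eq:Bdia}) lies in $\bfk\frakX_{(\deg_{td}(w)+\deg_{td}(w'))}$, closing the induction.

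For the compatibility $\varphi(\bfk\frakM(X,\Omega)_{(n)})\subseteq \bfk\frakX_{(n)}$, I would induct on $\deg_{td}(u)$ for $u\in\frakM(X,\Omega)$, using that $\varphi$ is a homomorphism of $\Omega$-operated algebras with $\varphi(x)=x$ for $x\in X$. The cases $u=1$ and $u=x\in X$ are immediate. If $u=u_1u_2$ with each $\deg_{td}(u_i)<\deg_{td}(u)$, then $\varphi(u)=\varphi(u_1)\dw\varphi(u_2)$ lies in $\bfk\frakX_{(\deg_{td}(u_1)+\deg_{td}(u_2))}=\bfk\frakX_{(\deg_{td}(u))}$ by the product bound just established. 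If $u=\lc v\rc_\omega$, then $\varphi(u)=P_\omega(\varphi(v))$ lies in $\bfk\frakX_{(\deg_{td}(v)+1)}=\bfk\frakX_{(\deg_{td}(u))}$ by the first inclusion.

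The main technical point will be the careful degree bookkeeping in Case~1 of the multiplication algorithm, especially for the weight term $\lambda_\beta\lc\lbar w\dw\lbar w'\rc_\alpha$: two brackets are stripped from $w$ and $w'$ but only one is restored on the outside, so the total degree drops by one relative to $\deg_{td}(w)+\deg_{td}(w')$. This is perfectly consistent with the filtration bound, but it also explains why the grading $\ncshaw(X,\Omega)=\oplus_{n\geq 0}\bfk\frakX^{(n)}$ fails to be preserved by $\dw$ (as remarked in the paper), so one cannot sharpen this statement to graded compatibility.
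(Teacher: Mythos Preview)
Your proposal is correct and follows essentially the same route as the paper. The only difference worth noting is the induction variable for the product bound: you induct on the sum of depths $\dep(w)+\dep(w')$ (mirroring the recursive definition of $\dw$ in the algorithm), whereas the paper inducts on $p+q=\deg_{td}(w)+\deg_{td}(w')$; both choices work, and the remaining argument, including the treatment of $\varphi$ by induction on total degree, is the same.
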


\begin{proof}
The first inclusion in Eq.~\eqref{eq:wfil} follows from the definition of the operators $P_\alpha$ in Eq.~(\mref{eq:wop}).

For the second inclusion in Eq.~\eqref{eq:wfil}, by linearity we just need to prove
$$  \frakX_{(p)} \dw  \frakX_{(q)} \subseteq \bfk \frakX_{(p+q)}, \quad p, q\geq 0,$$
for which we apply induction on $p+q\geq 0$, with the general remark that $\deg_{td}$ is additive with respect to the concatenation product. Thus for $w\in \frakX_{(p)}$ and $w'\in \frakX_{(q)}$, we have $w \dw w' \in \frakX_{(p+q)}$ as long as $w\dw w'=ww'$ is the concatenation.

When $p+q=0$, we have $p=q=0$. Since $\frakX_{(0)}=\frakX^{(0)}=M(X)=\frakM_0$ on which the product $\dw$ is the concatenation, the inclusion holds by the above general remark. Let $k\geq 0$. Assume that the inclusion holds for $p+q\leq k$ and consider the case when $p+q=k+1$. If either $p$ or $q$ is zero, then $\frakX_p\dw \frakX_q$ is the concatenation and the desired inclusion again follows. If none of $p$ or $q$ is zero and consider $w=w_1\cdots w_m\in \frakX_p$ and $w'=w'_1\cdots w'_{m'}\in \frakX_q$ with their alternating decompositions. Then $w\dw w'$ is again the concatenation and $w\dw w'$ is in $\frakX_{(p+q)}$ except when $w_m=\lc \lbar{w}_m\rc_{\alpha}$ and $w'_1=\lc \lbar{w}'_1\rc_\beta$ in which case, Eq.~(\mref{eq:Bdia}) gives
\begin{align*}
 w\dw w'= &w_1\cdots w_{m-1}\lc \lbar{w}_1\dw  w'_{m'}\rc_\alpha w'_2\cdots w'_{m'}+w_1\cdots w_{m-1}\lc w\dw  \lbar{w}'\rc_\beta w'_2\cdots w'_{m'}\\
 &+\lambda_\beta w_1\cdots w_{m-1} \lc\lbar{w}\dw \lbar{w}'
\rc_\alpha w'_2\cdots w'_{m'}.
\end{align*}
Since all the products are the concatenation except the ones in the brackets, by the general remark again, we just need to show that each of the brackets is in $\frakX_{(\deg_{td}(w_m)+\deg_{dt}(w'_1))}$.
But by the induction hypothesis, the $\dw$-products inside the three brackets are in $\bfk \frakX_{(\deg_{td}(w_m)-1+\deg_{td}(w'_1))}$. Hence the three brackets are in $\bfk \frakX_{(\deg_{td}(w_m)+\deg_{td}(w'_1))}$ by the first inclusion in Eq.~(\mref{eq:wfil}). This completes the induction.

We finally prove Eq.~(\mref{eq:wfilhom}) by induction on $n\geq 0$. The initial case of $n=0$ holds since $\frakM_0=M(X)$ equals $\frakX_0$ on which $\varphi$ is the identity. For a given $k\geq 0$, assume that $\varphi(\bfk \frakM_n)\subseteq \bfk \frakX_n$ for $n\geq k$ and consider $1\neq w\in \frakM_{k+1}$. If the width of $w$ is one, then $w$ is either in $X$ or is of the form $\lc \lbar{w}\rc_\omega, \lbar{w}\in\frakM_\infty, \omega\in \Omega.$ The former case is already proved. For the latter case, we have $\lbar{w}\in \frakM_{(k)}$ and so $\varphi(\lbar{w})$ is in $\frakX_{(k)}$ by the induction hypothesis and then $\varphi(w)=P_w(\varphi(\lbar{w}))$ is in $\bfk\frakX_{(k+1)}$ by the first inclusion in Eq.~(\mref{eq:wfil}). If the width of $w$ is greater than one, then $w=w_1w_2$ with $w_1, w_2\in \frakM_{(k)}$. Thus by the induction hypothesis,
$\varphi(w_i)\in \bfk\frakX_{(\deg_{td}(w_i))}, i=1,2.$ Since $\varphi$ is an algebra homomorphism, by the second inclusion in Eq.~(\mref{eq:wfil}), we have
$$ \varphi(w)=\varphi(w_1)\dw \varphi(w_2) \in \bfk \frakX_{(\deg_{td}(w_1))} \dw \frakX_{(\deg_{td}(w_2))} \subseteq \bfk\frakX_{(\deg_{td}(w))}.$$
This completes the induction.
\end{proof}

\subsection{Free \mrbas on decorated rooted forests}
In view of the convenience of working with rooted forests for Hopf algebra structures in the next section, we apply the isomorphism $\theta:\bfk\mathfrak{M}(X,\,\Omega) \overset{\sim}{\rightarrow} {\bfk}\rfs$ in Eq.~(\mref{eq:Isomorphism2}) of  $\Omega$-operated algebras to reformulate the main results in this section in terms of rooted forests. We first write
\begin{equation}
\mathfrak{L}_{n}:=\theta(\mathfrak{X}_{n}),\,n\geq0 \,\text{ and }\,  \mathfrak{L}_{\infty}:= \dirlim \frakL_n = \dirlim \theta(\frakX_n) = \theta(\mathfrak{X}_{\infty}),
\mlabel{eq:lbasisn} \notag
\end{equation}
giving rise to linear isomorphism
\begin{equation}
\ncshaw(X,\Omega)=\bfk \frakX_\infty \overset{\theta}{\longrightarrow} \ncshall(X,\Omega):= {\bfk}\mathfrak{L}_{\infty}, \quad \Id(S) \overset{\theta}{\longrightarrow} \Id(\frakS)
\mlabel{eq:isolw}
\end{equation}
for the operated ideals generated by $S$ and $\frakS$ defined in Eqs.~(\mref{eq:gsw}) and (\mref{eq:gsf}) respectively.
Similar to \match Rota-Baxter bracketed words, elements in $\mathfrak{L}_{\infty}$ are called {\bf \match Rota-Baxter forests} (MRBFs).

Further we obtain a homomorphism of $\Omega$-operated algebras
\begin{equation}
\psi:=\theta \varphi \theta^{-1}:(\bfk \rfs ,\,\cdot,\, \Bo)\rightarrow (\ncshal(X,\Omega),\, \dl ,\, \Bo),
\mlabel{eq:alghom}
\end{equation}
yielding the commutative diagram
\begin{equation}
\begin{split}
\xymatrix{
\bfk \frakM(X,\Omega)=\Id(S) \oplus \ncshaw(X,\Omega) \ar^(.65){\varphi}[rr] \ar^{\theta}[d] && \ncshaw(X,\Omega) \ar^{\theta}[d]\\
\bfk \calf(X,\Omega)=\Id(\frakS) \oplus \ncshal(X,\Omega) \ar^(.65){\psi}[rr] && \ncshal(X,\Omega)
}
\end{split}
\mlabel{eq:freediag}
\end{equation}

The following result shows an elementary property of $\psi$.

\begin{lemma}
Let $i: \ncshall(X,\Omega)= {\bfk}\mathfrak{L}_{\infty} \to \bfk\rfs$ be the natural inclusion.
Then $\psi i = \id_{\ncshal(X,\Omega)}$. Consequently, $i\psi$ is idempotent.
\mlabel{lem:varid}
\end{lemma}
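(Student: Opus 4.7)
The plan is to unpack the definitions and exploit the direct sum decomposition $\bfk\calf(X,\Omega)=\Id(\frakS)\oplus\ncshal(X,\Omega)$, which is obtained by transporting the decomposition $\bfk\frakM(X,\Omega)=\Id(S)\oplus\ncshaw(X,\Omega)$ through the $\Omega$-operated algebra isomorphism $\theta$ appearing in Eq.~(\mref{eq:isolw}). By Eq.~(\mref{eq:phidef}), $\varphi$ is by construction the projection onto the second summand, so its restriction to $\bfk\frakX_\infty$ is the identity map.

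First I would fix an arbitrary element $F\in\bfk\frakL_\infty=\ncshal(X,\Omega)$. Since $\theta$ restricts to a linear isomorphism between $\bfk\frakX_\infty$ and $\bfk\frakL_\infty$ (this is the very definition $\frakL_\infty=\theta(\frakX_\infty)$ introduced just above Eq.~(\mref{eq:isolw})), we have $\theta^{-1}(F)\in\bfk\frakX_\infty$, whence $\varphi(\theta^{-1}(F))=\theta^{-1}(F)$. Applying $\theta$ yields
\[
\psi\bigl(i(F)\bigr)=\theta\varphi\theta^{-1}(F)=\theta(\theta^{-1}(F))=F,
\]
and this establishes $\psi i=\id_{\ncshal(X,\Omega)}$.

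The stated consequence that $i\psi$ is idempotent then follows at once by associativity:
\[
(i\psi)(i\psi)=i(\psi i)\psi=i\circ \id_{\ncshal(X,\Omega)}\circ \psi=i\psi.
\]
There is no genuine obstacle in this argument; it is a routine bookkeeping consequence of two facts already in place, namely that $\theta$ is an $\Omega$-operated algebra isomorphism carrying $\bfk\frakX_\infty$ bijectively onto $\bfk\frakL_\infty$ (and $\Id(S)$ onto $\Id(\frakS)$), and that $\varphi$ is defined to be the projection afforded by the splitting of Theorem~\mref{them:FreeRB}. The only point to double-check is the compatibility of $\theta^{-1}$ with the inclusion $i$, but this is automatic since $i$ is just the embedding of the second summand of the decomposition into the ambient space.
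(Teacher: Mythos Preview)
Your argument is correct. The paper does not actually supply a proof of this lemma; it merely introduces it as ``an elementary property of $\psi$'' and states it without proof. Your verification fills in exactly the routine details one would expect: using that $\varphi$ is by definition the projection onto $\bfk\frakX_\infty$ in the splitting $\bfk\frakM(X,\Omega)=\Id(S)\oplus\bfk\frakX_\infty$, and that $\theta$ carries $\bfk\frakX_\infty$ bijectively onto $\bfk\frakL_\infty$, so that $\varphi\theta^{-1}$ fixes $\theta^{-1}(F)$ for any $F\in\bfk\frakL_\infty$. The idempotency deduction is also correct and immediate.
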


With this transporting of structures, the free \mrba structure on $\ncshaw(X,\Omega)$ gives rise to a free \mrba structure on $\ncshal(X,\Omega)$. More precisely,  define a product
$$\dl :\ncshal(X,\Omega) \ot \ncshal(X,\Omega)\rightarrow\ncshal(X,\Omega)$$ by taking
\begin{equation}
F\dl F':=\theta(\theta^{-1}(F)\dw \theta^{-1}(F'))~\text{ for } F, F'\in \ncshall(X,\Omega).
\mlabel{eq:lproduct}
\end{equation}
Also define a linear operator on $\ncshal(X,\Omega)$ by $\theta P_\omega \theta^{-1}$ which turns out to be just the grafting operator $B_\omega ^+$.

Moreover, the degree $\deg$ on $\calf(X,\Omega)$ and its derived grading $\calf^{(n)}$ and filtration $\calf_{(n)}$ restrict to a grading and filtration on $\bfk\frakL_\infty$. By Eq.~(\mref{eq:rbdegfil}), they are compatible with the ones on $\frakX_\infty$. More precisely,
\begin{equation}
\frakL^{(n)}:=\calf^{(n)}\cap \frakL_\infty=\theta(\frakX^{(n)}), \quad \frakL_{(n)}:=\calf_{(n)}\cap \frakL_\infty=\theta(\frakX_{(n)}), \quad n\geq 0.
\mlabel{eq:fdegfil}
\end{equation}

Therefore by Proposition~\mref{pp:wfil} we have
\begin{prop}
Let $j_X:X\hookrightarrow \ncshal(X,\Omega)$, $x\mapsto \bullet_x, x\in X,$ be the natural embedding.
Then the triple $(\ncshal(X,\Omega),\, \dl ,\, \Bo)$ together with $j_X$ is the free \mrba of weight $\lambda_\Omega$ on $X$. Further, $\ncshal(X,\Omega)$ with $\bfk \frakL_{(n)}, n\geq 0,$ is an $\Omega$-operated filtered algebra and $\psi$ is a homomorphism of $\Omega$-operated filtered algebras.
\mlabel{prop:alge}
\end{prop}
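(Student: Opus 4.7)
The plan is to transport everything through the $\Omega$-operated algebra isomorphism $\theta:\bfk\frakM(X,\Omega)\stackrel{\sim}{\to}\bfk\calf(X,\Omega)$ of Eq.~(\mref{eq:Isomorphism2}) and to invoke Theorem~\mref{them:FreeRB} and Proposition~\mref{pp:wfil}, which already handle the bracketed-word side. Since $\theta$ sends $x\in X$ to $\bullet_x$, intertwines $P_\omega$ with $B_\omega^+$, and (as recorded just above Proposition~\mref{prop:alge}) restricts to a linear isomorphism of $\ncshaw(X,\Omega)$ onto $\ncshal(X,\Omega)$ mapping $\Id(S)$ onto $\Id(\frakS)$, the definition $F\dl F':=\theta(\theta^{-1}(F)\dw\theta^{-1}(F'))$ in Eq.~(\mref{eq:lproduct}) makes $\theta\big|_{\ncshaw(X,\Omega)}:(\ncshaw(X,\Omega),\dw,\Po)\to(\ncshal(X,\Omega),\dl,\Bo)$ an isomorphism of $\Omega$-operated algebras by construction.

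From this and Theorem~\mref{them:FreeRB}, it follows immediately that $(\ncshal(X,\Omega),\dl,\Bo)$ with the embedding $j_X:x\mapsto\bullet_x$ is a free \mrba of weight $\lambda_\Omega$ on $X$: given any \mrba $(R,P'_\Omega)$ of weight $\lambda_\Omega$ and any map $f:X\to R$, composing $f$ with $\theta^{-1}j_X$ gives a map $X\to\ncshaw(X,\Omega)$ which extends uniquely to a \mrba homomorphism $\bar f':\ncshaw(X,\Omega)\to R$, and then $\bar f:=\bar f'\theta^{-1}:\ncshal(X,\Omega)\to R$ is the required unique \mrba homomorphism. Uniqueness on the forest side follows from uniqueness on the bracketed-word side via the isomorphism $\theta$.

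For the filtered structure, I would first observe that $\theta$ matches the two total-degree functions: since $\theta(x)=\bullet_x$ and $\theta(\lc w\rc_\omega)=B_\omega^+(\theta(w))$, an inductive argument on depth (using that both $\deg_{td}$ on bracketed words and $\deg$ on forests count letters/leaves plus operator/internal-vertex occurrences additively under concatenation) gives $\deg_{td}(w)=\deg(\theta(w))$. Combined with Eq.~(\mref{eq:fdegfil}) this yields $\theta(\frakX_{(n)})=\frakL_{(n)}$. Applying Proposition~\mref{pp:wfil} and transporting via $\theta$, the inclusions $B_\omega^+(\bfk\frakL_{(n)})\subseteq\bfk\frakL_{(n+1)}$ and $\bfk\frakL_{(p)}\dl\bfk\frakL_{(q)}\subseteq\bfk\frakL_{(p+q)}$ follow at once, so $(\ncshal(X,\Omega),\{\bfk\frakL_{(n)}\}_{n\geq 0})$ is an $\Omega$-operated filtered algebra in the sense of Definition~\mref{de:ofil}.

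Finally, for the assertion about $\psi$, I would read off from the commutative diagram~(\mref{eq:freediag}) that $\psi=\theta\varphi\theta^{-1}$. The map $\theta$ preserves the filtrations by the degree identification above; the map $\varphi$ preserves them by Eq.~(\mref{eq:wfilhom}) in Proposition~\mref{pp:wfil}. Composing, $\psi(\bfk\calf(X,\Omega)_{(n)})\subseteq\bfk\frakL_{(n)}$ for all $n\geq 0$, which together with Lemma~\mref{lem:varid} (giving that $\psi$ is an $\Omega$-operated algebra homomorphism) establishes that $\psi$ is a homomorphism of $\Omega$-operated filtered algebras. The only slightly delicate point in the whole argument is the matching of the two depth/degree bookkeeping systems for bracketed words and for forests; once that is in hand, everything else is a formal transport through $\theta$.
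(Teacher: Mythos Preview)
Your proposal is correct and follows exactly the approach the paper intends: the paper states the proposition immediately after ``Therefore by Proposition~\mref{pp:wfil} we have,'' leaving the details of the transport through $\theta$ implicit, and you have simply filled those in. One small slip: the fact that $\psi$ is an $\Omega$-operated algebra homomorphism is recorded in Eq.~(\mref{eq:alghom}), not in Lemma~\mref{lem:varid} (which gives $\psi i=\id$); otherwise your argument matches the paper's line of reasoning.
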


\section{$\Omega$-cocycle Hopf algebras and free \mrbas}
\mlabel{sec:hopf}
In this section, we first derive an $\Omega$-cocycle bialgebraic structure on the free matching Rota-Baxter algebra $\ncshal(X, \Omega)$, via a construction of a suitable coproduct.
We then show that this $\Omega$-cocycle bialgebra is connected cofiltered and so a Hopf algebra.

Let $\uul: {\bfk}\rightarrow \ncshal(X, \Omega)$ be the linear map given by $1_{\bfk}\mapsto 1$.
By Proposition~\mref{prop:alge}, the triple $(\ncshal(X, \Omega),\dl,\uul)$ is an algebra.
We now define a linear map $\del:\ncshal(X, \Omega)\rightarrow\ncshal(X, \Omega)\otimes\ncshal(X, \Omega)$ by setting
\begin{equation}
\del(F):=(\psi\ot\psi)\col i (F) \quad \text{for all } ~F\in\ncshal(X, \Omega),
\mlabel{eq:lcoproduct}
\end{equation} where $i:\ncshal(X, \Omega)\to \bfk \rfs$ is the natural inclusion.
In other words, $\del$ is defined so that the diagram
$$
\xymatrix{
\ncshal(X, \Omega) \ar^(.4){\del}[rr] \ar_{i}[d] && \ncshal(X, \Omega)\ot \ncshal(X, \Omega) \\
\bfk \rfs \ar^(.4){\col}[rr] && \bfk \rfs \ot \bfk \rfs\ar_{\psi\ot \psi}[u]
}
$$
commutes.

Define $\epsl  :\ncshal(X, \Omega) \rightarrow {\bfk}$ by setting
\begin{equation}
\epsl(F) =
\left \{\begin{array}{ll}
0, &\text{ if } F\neq 1, \\
1, &\text{ if } F =1.
\end{array} \right.
\mlabel{eq:ep1}
\end{equation}

We first verify that
the coproduct $\del$ on $\ncshal(X, \Omega)$ satisfies the Hochschild 1-cocycle condition.
\begin{lemma}\mlabel{lem:3.8}
Let $F=\pl(\lbar{F})$ be in $\ncshal(X, \Omega)$. Then
\begin{equation}
\del(\pl(\lbar{F}))=\pl(\lbar{F})\ot1+(\id\ot\pl)\del(\lbar{F}).
\mlabel{eq:lTree}
\end{equation}
\end{lemma}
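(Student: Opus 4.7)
The plan is to unfold the definition of $\del$ in Eq.~\eqref{eq:lcoproduct}, transport the cocycle identity already known on $\bfk\rfs$ (Eq.~\eqref{eq:cocycle}) through the projection $\psi$, and then clean up using the operated-algebra homomorphism property of $\psi$ together with Lemma~\mref{lem:varid}. Concretely, I would first write
\[
\del(\pl(\lbar{F})) = (\psi\ot\psi)\,\col\, i(\pl(\lbar{F})).
\]
The first key point is the intertwining $i\,\pl = \pl\,i$ of the inclusion $i:\ncshal(X,\Omega)\hookrightarrow\bfk\rfs$ with the grafting operator: by construction the operator $\pl$ on $\ncshal(X,\Omega)$ is the restriction of the grafting operator $B_\omega^{+}$ on $\bfk\rfs$ (indeed, $\theta P_\omega\theta^{-1}=B_\omega^+$ and the corresponding forest $B_\omega^+(\lbar{F})$ lies in $\frakL_\infty$ since grafting increases the depth of a MRBF by one). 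Hence $i(\pl(\lbar{F}))=\pl(i(\lbar{F}))$.

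Next I would apply the 1-cocycle condition of Eq.~\eqref{eq:cocycle} for $\col$ on $\bfk\rfs$ to the element $i(\lbar{F})$, obtaining
\[
\col(\pl(i(\lbar{F}))) = \pl(i(\lbar{F}))\ot 1 + (\id\ot\pl)\,\col(i(\lbar{F})).
\]
Applying $\psi\ot\psi$ to both sides gives
\[
\del(\pl(\lbar{F})) = \psi(\pl(i(\lbar{F})))\ot\psi(1) + (\psi\ot\psi\pl)\,\col(i(\lbar{F})).
\]
Since $\psi$ is a homomorphism of $\Omega$-operated algebras (Proposition~\mref{prop:alge} and Eq.~\eqref{eq:alghom}), one has $\psi\pl=\pl\psi$; and $\psi i=\id_{\ncshal(X,\Omega)}$ by Lemma~\mref{lem:varid}; and $\psi$ is unital. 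These yield $\psi(\pl(i(\lbar{F})))=\pl(\psi(i(\lbar{F})))=\pl(\lbar{F})$, $\psi(1)=1$, and
\[
(\psi\ot\psi\pl)\,\col(i(\lbar{F})) = (\id\ot\pl)(\psi\ot\psi)\,\col(i(\lbar{F})) = (\id\ot\pl)\,\del(\lbar{F}),
\]
which is exactly the desired Eq.~\eqref{eq:lTree}.

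The only conceptual hurdle is the first bullet, namely verifying that grafting on the $\ncshal$ side and grafting on $\bfk\rfs$ are genuinely compatible under the inclusion $i$, so that the cocycle identity living in $\bfk\rfs$ can be pulled back through the projection $\psi$. Once that compatibility is established, the rest is a mechanical chase in the commutative square defining $\del$, using only that $\psi$ is a unital $\Omega$-operated algebra homomorphism that splits $i$.
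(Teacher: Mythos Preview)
Your proof is correct and follows essentially the same route as the paper's: unfold $\del$ via Eq.~\eqref{eq:lcoproduct}, apply the cocycle identity~\eqref{eq:cocycle} for $\col$ on $\bfk\rfs$, and then simplify using $\psi\pl=\pl\psi$ (from Eq.~\eqref{eq:alghom}) and $\psi i=\id$ (Lemma~\mref{lem:varid}). The only cosmetic difference is that where you carefully phrase the compatibility $i\,\pl=\pl\,i$, the paper simply invokes ``$i$ being an inclusion map'' to drop $i$ from the notation, which amounts to the same thing.
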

\begin{proof}
By the linearity, we just need to verify Eq.~(\mref{eq:lTree}) for $F\in\frak L_{\infty}$. Then
\begin{align*}
\del(\pl(\lbar{F}))
&=(\psi\ot\psi)\col i(\pl(\lbar{F}))\quad(\text{By Eq.~(\mref{eq:lcoproduct})})\\
&=(\psi\ot\psi)\col(\pl(\lbar{F}))\quad(\text{by $i$ being an inclusion map})\\
&=(\psi\ot\psi)(F\ot1+(\id\ot\pl)\col(\lbar{F}))\quad(\text{By Eq.~(\mref{eq:cocycle})})\\
&=\psi(F)\ot\psi(1)+(\psi\ot\psi\pl)\col(\lbar{F})\\
&=\psi i(F)\ot\psi(1)+(\psi\ot\psi\pl)\col i(\lbar{F})\quad(\text{by $i$ being an inclusion map})\\
&=F\ot1+(\psi\ot\psi\pl)\col i(\lbar{F})\quad(\text{by Lemma~\mref{lem:varid}})\\
&=F\ot1+(\psi\ot\pl\psi)\col i(\lbar{F})\\
&\hspace{2cm}(\text{by $\psi$ being an operated algebra homomorphism in Eq.~(\mref{eq:alghom})})\\
&=F\ot1+(\id\ot\pl)(\psi\ot\psi)\col i(\lbar{F})\\
&=F\ot1+(\id\ot\pl)\del(\lbar{F})\quad(\text{by Eq.~(\mref{eq:lcoproduct})}).
\end{align*}
This completes the proof.
\end{proof}

Next we verify the compatibility of $\del$ with $\dl$, starting with a special case.

\begin{lemma}\label{lem:Morphism0}
Let $F, F'\in\frakL_{\infty}$ with $F\dl F'=FF'$. Then
\begin{equation*}
\del(F\dl F')=\del(F)\dl\del(F').
\end{equation*}
\end{lemma}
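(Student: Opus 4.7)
The plan is to chain together three facts established earlier and one consequence of the hypothesis. The three facts are: (i) on $\bfk\calf(X,\Omega) = \hrts$, the coproduct $\col$ is an algebra homomorphism with respect to concatenation $\mul$, since by Theorem~\mref{thm:hopfx} $\hrts$ is a bialgebra; (ii) by Proposition~\mref{prop:alge}, $\psi:(\bfk\calf(X,\Omega),\mul) \to (\ncshal(X,\Omega),\dl)$ is a homomorphism of $\Omega$-operated algebras; and (iii) since $F, F' \in \frakL_\infty$ and by assumption $F\dl F' = FF' \in \frakL_\infty$, the inclusion $i$ is ``multiplicative at the pair $(F,F')$'' in the sense that $i(F\dl F') = i(F)\mul i(F')$.

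Concretely, I would unwind the definition using (iii) first:
$$\del(F\dl F') = (\psi\otimes\psi)\col i(F\dl F') = (\psi\otimes\psi)\col\bigl(i(F)\mul i(F')\bigr).$$
Next, using (i) to push $\col$ through the concatenation and writing the induced componentwise multiplication on $\bfk\calf(X,\Omega)^{\otimes 2}$ again as $\mul$, the right-hand side equals
$$(\psi\otimes\psi)\bigl(\col i(F)\mul \col i(F')\bigr).$$
Finally, applying (ii) componentwise, $\psi\otimes\psi$ converts the $\mul$-product of tensors into the $\dl$-product of their images, yielding
$$\bigl((\psi\otimes\psi)\col i(F)\bigr)\dl\bigl((\psi\otimes\psi)\col i(F')\bigr) = \del(F)\dl\del(F'),$$
which is the desired identity.

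The only step that deserves scrutiny is (iii), since $i$ is in general far from being a ring map; here it is rescued by the hypothesis $F\dl F' = FF'$, which forces the products on both sides of $i$ to coincide with the forest concatenation. Everything else is a routine propagation along the commutative square~\eqref{eq:freediag}. I anticipate no further obstacle in this special case, and this lemma will serve as the base case for extending the compatibility $\del(F\dl F') = \del(F)\dl\del(F')$ to the general situation (where the $\dl$-product differs from the concatenation) by induction on depth, using the $1$-cocycle identity of Lemma~\mref{lem:3.8}.
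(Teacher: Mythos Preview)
Your proof is correct and follows essentially the same approach as the paper: both arguments unfold the definition of $\del$ via Eq.~(\ref{eq:lcoproduct}), use the hypothesis $F\dl F'=FF'$ to replace $i(F\dl F')$ by the concatenation $i(F)i(F')$, apply that $\col$ is an algebra homomorphism, and then use that $\psi$ is an algebra homomorphism to pull the product back to $\dl$. Your explicit emphasis on why the inclusion $i$ behaves multiplicatively at this particular pair is a welcome clarification that the paper leaves implicit.
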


\begin{proof}
We have
\begin{align*}
\del ( F\,\dl \, F')
&=(\psi\ot\psi)\col i(FF')\\
&=(\psi\ot\psi)\col (FF')\quad(\text{by $i$ being an inclusion map})\\
&=(\psi\ot\psi)\bigg(\col(F)\col(F')\bigg)
\quad(\text{by $\col$ being an algebra homomorphism})\\
&=\bigg((\psi\ot\psi)\col(F)\bigg)\dl\bigg( (\psi\ot\psi)\col(F')\bigg)
\quad(\text{by $\psi$ being an algebra homomorphism})\\
&=\bigg((\psi\ot\psi)\col i (F)\bigg)\dl\bigg( (\psi\ot\psi)\col i (F')\bigg)\quad(\text{by $i$ being an inclusion map})\\
&=\del (F)\,\dl \,\del ( F') \quad(\text{by Eq.~(\mref{eq:lcoproduct})}).
\end{align*}
This completes the proof.
\end{proof}
In general, we have

\begin{lemma}\mlabel{lem:compa}
Let $F, F'\in\ncshal(X, \Omega)$. Then
\begin{align}
\del(F\dl F')=\del(F)\dl\del(F') \, \text{ and }\, \epsl(F\dl F')=\epsl(F)\epsl(F').
\mlabel{eq:Morphism}
\end{align}
\end{lemma}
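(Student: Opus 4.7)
The plan is to prove the counit compatibility by direct inspection and the coproduct compatibility by a double induction that reduces the general situation to Lemma~\mref{lem:Morphism0} and to a single bracket-bracket configuration controlled by Eq.~(\mref{eq:Bdia}) together with the cocycle identity of Lemma~\mref{lem:3.8}. For the counit, I first note from Eq.~(\mref{eq:ep1}) that $\epsl$ is the characteristic function of $1 \in \frakL_\infty$, and inspection of the algorithm in Eqs.~(\mref{eq:Bdia})--(\mref{eq:cdiam}) shows that $F \dl F' = 1$ exactly when $F = F' = 1$, because every summand produced for non-unit inputs has positive total degree. The desired identity $\epsl(F \dl F') = \epsl(F)\epsl(F')$ then follows by checking the three cases according to how many of $F, F'$ equal $1$.

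For the coproduct equation, by linearity I restrict to $F, F' \in \frakL_\infty$ and induct on the pair $(n, b) = (\dep(F) + \dep(F'),\, \bre(F) + \bre(F'))$ in the lexicographic order. The base case $n = 0$ places both forests in $M(\bullet_X)$, where $F \dl F' = FF'$ is concatenation and Lemma~\mref{lem:Morphism0} closes the case. In the inductive step, if $b \geq 3$ I use the alternating decompositions $F = w_1 \cdots w_m$ and $F' = w'_1 \cdots w'_{m'}$ from Lemma~\mref{lem:wprod}: Eq.~(\mref{eq:cdiam}) writes $F \dl F'$ as a concatenation whose only non-concatenation factor is $w_m \dl w'_1$, and iterated application of Lemma~\mref{lem:Morphism0} reduces the identity to the width-one instance $\del(w_m \dl w'_1) = \del(w_m) \dl \del(w'_1)$, which has the same primary index but strictly smaller secondary index $\bre(w_m) + \bre(w'_1) = 2 < b$. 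If instead $\bre(F) \leq 1$ and $\bre(F') \leq 1$, the only case escaping Lemma~\mref{lem:Morphism0} is $F = B_\alpha^+(\bar F)$, $F' = B_\beta^+(\bar F')$; expanding $F \dl F'$ by Eq.~(\mref{eq:Bdia}), applying Lemma~\mref{lem:3.8} to each of the three resulting grafted summands, and invoking the induction hypothesis on $\bar F \dl F'$, $F \dl \bar F'$ and $\bar F \dl \bar F'$ (all of strictly smaller primary index) converts the left-hand side into an explicit sum of tensors. The right-hand side is expanded symmetrically via Lemma~\mref{lem:3.8} and the tensor-square product, after which the single summand of the shape $B_\alpha^+(u_{(2)}) \dl B_\beta^+(v_{(2)})$ arising from the product of the two cocycle tails is opened once more by Eq.~(\mref{eq:Bdia}); matching the six resulting tensor types concludes the induction.

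The main obstacle is the bracket-bracket subcase: one must track roughly six Sweedler-type tensors on each side and recognise that a single further application of the matching Rota-Baxter relation Eq.~(\mref{eq:Bdia}) to the product of the two cocycle tails is precisely what converts the right-hand expansion into the form produced on the left, with the weight term $\lambda_\beta$ landing on the correct matching summand.
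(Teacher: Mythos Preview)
Your proposal is correct and follows essentially the same route as the paper's proof: both reduce via Lemma~\mref{lem:Morphism0} to the single bracket--bracket case and settle that case by expanding with Eq.~(\mref{eq:Bdia}), applying the cocycle identity of Lemma~\mref{lem:3.8}, and using one further instance of the matching Rota-Baxter relation to reconcile the cross term. The only organizational difference is that the paper runs a single induction on $\dep(F)+\dep(F')$ and, within each step, first treats the breadth-one case and then invokes it for larger breadth, whereas you formalize this as a lexicographic double induction on $(\dep(F)+\dep(F'),\,\bre(F)+\bre(F'))$; the two schemes are equivalent.
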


\begin{proof}
The second equation follows from the definition of $\epsl$ in Eq.~(\mref{eq:ep1}).

For the first equation, by the linearity, we just need to consider the case when $F,F'\in \frak L_{\infty}$.
We apply the induction on the sum of depths $s:=\dep(F)+\dep(F')\geq 0$. For the initial step of $s=0$, we have $\dep(F) =\dep(F') = 0$ and so $F\dl F' =FF'$. Then Eq.~(\mref{eq:Morphism}) follows from Lemma~\mref{lem:Morphism0}.

Let $t\geq 0$. Assume that Eq.~(\mref{eq:Morphism}) holds for $s=t$ and consider the case of $s=t+1$. In this case, we first consider the case when $\bre(F)=\bre(F')=1$.
If $F\dl F' = FF'$, then Eq.~(\mref{eq:Morphism}) follows from Lemma \mref{lem:Morphism0}. If $F\dl F' \neq FF'$, then we have
$F=\pla(\lbar{F})\,\text{ and }\, F'=\plb(\lbar{F}')$ for some $\alpha, \beta\in \Omega$ and $\lbar{F},\lbar{F}'\in\frak L_{\infty}.$
Write
\begin{equation}
\del(\lbar{F}):=\sum_{(\lbar{F})}\lbar{F}_{(1)}\ot\lbar{F}_{(2)}\,\text{ and }\,\del(\lbar{F}'):=\sum_{(\lbar{F}')}\lbar{F}'_{(1)}\ot\lbar{F}'_{(2)}.
\mlabel{eq:coff}
\end{equation}
Then
\begin{align*}
&\del(F\dl F')\\
=&\ \del(\pla(\lbar{F})\dl \plb(\lbar{F}')) \\
=&\ \del\Big(\pla(\lbar{F}\dl \plb(\lbar{F}'))+\plb(\pla(\lbar{F})\dl \lbar{F}')+\lambda_\beta \pla(\lbar{F}\dl \lbar{F}') \Big)\quad(\text{by Proposition \mref{prop:alge}}) \\
=&\ \del\pla(\lbar{F}\dl \plb(\lbar{F}'))+\del\plb(\pla(\lbar{F})\dl \lbar{F}')+\lambda_\beta \del\pla(\lbar{F}\dl \lbar{F}')  \\
=&\ \pla(\lbar{F}\dl \plb(\lbar{F}'))\ot \etree+ (\id \ot \pla)\del\Big(\lbar{F}\dl \plb(\lbar{F}')\Big)
+\plb(\pla(\lbar{F})\dl \lbar{F}')\ot \etree \\
&\ +(\id \ot \plb)\del \Big(\pla(\lbar{F})\dl \lbar{F}'\Big)+\lambda_\beta \pla(\lbar{F}\dl \lbar{F}')\ot \etree+\lambda_\beta(\id \ot \pla)\del(\lbar{F}\dl \lbar{F}')\quad(\text{by Eq.~(\mref{eq:lTree})}) \\
=&\ \pla(\lbar{F})\dl \plb(\lbar{F}')\ot \etree+ (\id \ot \pla)\del\Big(\lbar{F}\dl \plb(\lbar{F}')\Big)
 +(\id \ot \plb)\del \Big(\pla(\lbar{F})\dl \lbar{F}'\Big)\\
&\ +\lambda_\beta(\id \ot \pla)\del(\lbar{F}\dl \lbar{F}')\quad(\text{by Proposition \mref{prop:alge}}) \\
=&\ (F\dl F')\ot 1+(\id\ot\pla) \Big(\del(\lbar{F})\dl \del\plb(\lbar{F}')\Big) + (\id\ot\plb)\Big(\del\pla(\lbar{F})\dl \del(\lbar{F}')\Big)\\
&\ +\lambda_\beta (\id\ot\pla) \Big(\del(\lbar{F})\dl \del(\lbar{F}')\Big) \quad
(\text{by the induction hypothesis on}~s)\\
=&\ (F\dl F')\ot 1+\sum_{(\lbar{F})}(\lbar{F}_{(1)}\dl F')\ot\pla(\lbar{F}_{(2)})+\sum_{(\lbar{F}')}(F\dl\lbar{F}'_{(1)})\ot\plb(\lbar{F}'_{(2)})\\
&\ +\sum_{(\lbar{F})}\sum_{(\lbar{F}')}(\lbar{F}_{(1)}\dl\lbar{F}'_{(1)})
\ot(\pla(\lbar{F}_{(2)})\dl\plb(\lbar{F}'_{(2)}))\quad(\text{by Eqs.~(\mref{eq:lTree}) and ~(\mref{eq:coff})})\\
=&\ \bigg(F\ot 1+\sum_{(\lbar{F})}\lbar{F}_{(1)}\ot\pla(\lbar{F}_{(2)})\bigg)\dl\bigg(F'\ot 1+\sum_{(\lbar{F}')}\lbar{F}'_{(1)}\ot\plb(\lbar{F}'_{(2)})\bigg)\\
=&\ \bigg(F\ot 1+(\id\ot\pla)\del(\lbar{F})\bigg)\dl\bigg(F'\ot 1+(\id\ot\plb)\del(\lbar{F}')\bigg)\\
=&\ \del(F)\dl \del(F').
\end{align*}
We next consider the general case when $\bre(F)> 1$ or $\bre(F')> 1$ (or both). We just assume that both hold since the other cases are similar. Then we can write
$F=F_{0}T_{1}$ or  $F'=T'_{0}F'_{1}$,
for some matching Rota-Baxter forests $F_{0}, F'_{1}$ and some matching Rota-Baxter trees $T_{1},T'_{0}$.
Since
$$F_{0} (T_{1}\dl T'_{0}) F'_{1} =F_{0}\dl (T_{1}\dl T'_{0})\dl F'_{1},$$ we have
\begin{align*}
\del(F\dl F')
&=\del( F_{0} (T_{1}\dl T'_{0}) F'_{1})\\
&=\del(F_{0})\dl \del(T_1\dl T'_0) \dl \del(F'_{1}) \quad (\text{by Lemma~\mref{lem:Morphism0}})\\
&=\del(F_{0})\dl \del(T_{1})\dl \del(T'_{0})\dl\del( F'_{1})\quad(\text{by the case when }\bre(F)=\bre(F')=1)\\
&=\big(\del(F_{0})\dl \del(T_{1})\big)\dl \big(\del(T'_{0})\dl\del( F'_{1})\big)\\
&=\del(F_{0}\dl T_{1})\dl\del(T'_{0}\dl F'_{1})\quad(\text{by Lemma~\mref{lem:Morphism0}})\\
&=\del(F_{0} T_{1})\dl\del(T'_{0}F'_{1})\\
&=\del(F)\dl\del(F').
\end{align*}
This completes the proof.
\end{proof}

\begin{theorem}
The sextuple $(\ncshal(X, \Omega),\,
\dl, \, \uul, \, \del,\, \epsl,\, \Bo)$ is an $\Omega$-cocycle bialgebra.
\mlabel{them:cocbia}
\end{theorem}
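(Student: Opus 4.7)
By Proposition~\mref{prop:alge} the triple $(\ncshal(X,\Omega),\dl,\uul)$ is already an algebra; Lemma~\mref{lem:compa} gives that $\del$ and $\epsl$ are algebra homomorphisms; Lemma~\mref{lem:3.8} is exactly the $\Omega$-cocycle identity $\del B_\omega^+=B_\omega^+\otimes 1+(\id\otimes B_\omega^+)\del$. The unit--counit compatibility $\del(1)=(\psi\otimes\psi)\col i(1)=(\psi\otimes\psi)(1\otimes 1)=1\otimes 1$ and $\epsl(1)=1$ follow directly from the definitions. So only the coassociativity identity $(\del\otimes\id)\del=(\id\otimes\del)\del$ and the counit identity $(\epsl\otimes\id)\del=\id=(\id\otimes\epsl)\del$ remain to be verified.

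I would establish both identities on the linear basis $\mathfrak{L}_\infty$ by induction on the total degree $\deg(F)$. The bases $F=1$ and $F=\bullet_x$ for $x\in X$ are immediate, since $\del(\bullet_x)=\bullet_x\otimes 1+1\otimes\bullet_x$ and both axioms can be checked by hand. When $\bre(F)=1$ and $F=B_\omega^+(\bar F)$ with $\deg(\bar F)<\deg(F)$, Lemma~\mref{lem:3.8} gives
\begin{equation*}
\del(F)=F\otimes 1+\sum_{(\bar F)}\bar F_{(1)}\otimes B_\omega^+(\bar F_{(2)}),
\end{equation*}
and one expands
\begin{align*}
(\del\otimes\id)\del(F)&=F\otimes 1\otimes 1+\sum\bar F_{(1)}\otimes B_\omega^+(\bar F_{(2)})\otimes 1+\sum\bar F_{(1)(1)}\otimes\bar F_{(1)(2)}\otimes B_\omega^+(\bar F_{(2)}),\\
(\id\otimes\del)\del(F)&=F\otimes 1\otimes 1+\sum\bar F_{(1)}\otimes B_\omega^+(\bar F_{(2)})\otimes 1+\sum\bar F_{(1)}\otimes\bar F_{(2)(1)}\otimes B_\omega^+(\bar F_{(2)(2)}),
\end{align*}
using Lemma~\mref{lem:3.8} once more for the second line; the third summands agree after applying $\id\otimes\id\otimes B_\omega^+$ to the induction hypothesis at $\bar F$. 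Counicity at $F$ is similar: $(\epsl\otimes\id)\del(F)=\epsl(F)\cdot 1+\sum\epsl(\bar F_{(1)})B_\omega^+(\bar F_{(2)})=B_\omega^+((\epsl\otimes\id)\del(\bar F))=B_\omega^+(\bar F)=F$, using $\epsl(F)=0$, $\bfk$-linearity of $B_\omega^+$, and the induction hypothesis, with the right counit analogous. Finally, when $\bre(F)\geq 2$, the alternating decomposition in Lemma~\mref{lem:wprod} lets one split $F=F_1\dl F_2$ with both factors in $\mathfrak{L}_\infty$ of strictly smaller total degree; then $\del(F)=\del(F_1)\dl\del(F_2)$ and $\epsl(F)=\epsl(F_1)\epsl(F_2)$ by Lemma~\mref{lem:compa}, and coassociativity and counicity propagate to $F$ from the induction hypothesis by straightforward multiplicativity.

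The main obstacle is really just the Sweedler bookkeeping in the grafted step, since one has to track which cocycle contribution comes from $F$ itself versus from the inductive contributions of $\bar F$. Conceptually the whole argument is a propagation of coassociativity and counicity from the Foissy--Holtkamp-type Hopf algebra $(\hrts,\col,\epl)$ of Theorem~\mref{thm:hopfx} through the projection $\psi:\bfk\rfs\to\ncshal(X,\Omega)$, as encoded in the definition $\del=(\psi\otimes\psi)\col i$; the fact that $\psi i=\id$ (Lemma~\mref{lem:varid}) while $i\psi$ is only idempotent is precisely what forces one to argue inductively rather than transport the coalgebra axioms directly, but the cocycle identity in Lemma~\mref{lem:3.8} is designed exactly to make the induction work.
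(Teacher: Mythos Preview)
Your proposal is correct and follows essentially the same approach as the paper. The paper organizes the same verification slightly differently: instead of inducting on $\deg(F)$ over the basis $\mathfrak{L}_\infty$, it defines the sets $\mathscr{C}=\{F:(\del\otimes\id)\del(F)=(\id\otimes\del)\del(F)\}$ and $\mathscr{D}=\{F:(\epsl\otimes\id)\del(F)=\beta_l(F),\ (\id\otimes\epsl)\del(F)=\beta_r(F)\}$ and shows each is an $\Omega$-operated subalgebra of $\ncshal(X,\Omega)$ containing $\bullet_x$ for $x\in X$, hence equal to the whole algebra by Proposition~\mref{prop:alge}. The actual computations at the grafting step $F=B_\omega^+(\bar F)$ and the multiplicative step are identical to yours; only the inductive framing differs, and both are standard ways to propagate identities through a set of generators, products, and operators.
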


\begin{proof}
By Lemmas~\mref{lem:3.8} and \mref{lem:compa}, we only need to verify the coassiciativity of $\del$ and the  counicity of $\epsl$.
For the coassociativity of $\del$, following the idea of the proof of Theorem~\mref{thm:propm}, we just need to show that the set
\begin{align*}
\mathscr{C}:= \{F \in \ncshal(X, \Omega)\mid \, (\del \ot \id)\del(F)= (\id \ot \del) \del(F) \}
\end{align*}
is an $\Omega$-operated subalgebra of $\ncshal(X,\Omega)$.

Note that $1$ is in $\mathscr{C}$. By Lemma~\mref{lem:compa}, $\del$ is an algebra homomorphism. Then $(\del \ot \id) \del$ and $(\id \ot \del) \del$ are also algebra homomorphisms, implying that $\mathscr{C}$ is a subalgebra of $\ncshal(X, \Omega)$. For any $x \in X$, we have
\begin{align*}
(\del \ot \id) \del(\bullet_{x})&\ =(\del \ot \id) (1 \ot \bullet_{x}+ \bullet_{x} \ot 1)\quad \text{(by Eq.~(\mref{eq:lcoproduct}))}\\
&\ =1 \ot 1 \ot \bullet_{x}+ 1 \ot \bullet_{x} \ot 1 + \bullet_{x} \ot 1 \ot 1\\
&=(\id \ot \del)\del(\bullet_{x}).
\end{align*}
Thus $\bullet_{x} \in \mathscr{C}$. For any $F \in \mathscr{C}$ and $\omega \in \Omega$, we have
\begin{align*}
& (\del \ot \id)\del(B_{\omega}^{+}(F))\\
=& (\del \ot \id)(B_{\omega}^{+}(F) \ot 1+ (\id \ot B_{\omega}^{+}) \del(F)) \quad \text{(by Lemma~\ref{lem:3.8})}\\
=& \del(B_{\omega}^{+}(F)) \ot 1+ (\del \ot B_{\omega}^{+}) \del(F)\\
=& B_{\omega}^{+}(F) \ot 1 \ot 1+ (\id \ot B_{\omega}^{+}) \del(F) \ot 1+(\id \ot \id \ot B_{\omega}^{+})(\del \ot \id)\del(F) \quad \text{(by Lemma~\ref{lem:3.8})}\\
=& B_{\omega}^{+}(F) \ot 1 \ot 1+(\id \ot B_{\omega}^{+})\del(F) \ot 1+ (\id \ot \id \ot B_{\omega}^{+})(\id \ot \del)\del(F)\quad \text{(by $F\in \mathscr{C}$)}\\
=& B_{\omega}^{+}(F) \ot 1 \ot 1+\Big((\id \ot B_{\omega}^{+}) \ot 1+ (\id \ot \id \ot B_{\omega}^{+})(\id \ot \del)\Big)\del(F)\\
=& B_{\omega}^{+}(F) \ot 1 \ot 1+ (\id \ot \del B_{\omega}^{+})\del(F)\quad \text{(by Lemma~\ref{lem:3.8})}\\
=& (\id \ot \del)(B_{\omega}^{+}(F) \ot 1+(\id \ot B_{\omega}^{+}) \del(F)) \\
=& (\id \ot \del) \del(B_{\omega}^{+}(F))\quad \text{(by Lemma~\ref{lem:3.8})},
\end{align*}
which means that $B_{\omega}^{+}(F) \in \mathscr{C}$. Thus $\mathscr{C}$ is stable under $B_{\omega}^{+}$ for any $\omega \in \Omega$. In summary, $\mathscr{C}=\ncshal(X, \Omega)$ and so $\del$ is coassociative.

For the counicity, we similarly consider
\begin{align*}
\mathscr{D}:= \left\{F \in \ncshal(X, \Omega) \mid \, (\epsl \ot \id) \del(F)=\beta_{l}(F)\, \text{ and }\, ( \id \ot \epsl ) \del(F)=\beta_{r}(F)  \right\},
\end{align*}
adapting notations from~\mcite{Gub}. Note that $1 \in \mathscr{D}$. By Lemma~\mref{lem:compa}, $\del$ and $\epsl$ are algebra homomorphisms, so $\mathscr{D}$ is a subalgebra of $\ncshal(X, \Omega)$. For any $x \in X$, by Eq.~(\mref{eq:ep1}), we have
\begin{align*}
(\epsl \ot \id) \del(\bullet_{x})&\ = (\epsl \ot \id)(1 \ot \bullet_{x}+ \bullet_{x} \ot 1) =1_\bfk \ot \bullet_{x}=\beta_{l}(\bullet_x)
\end{align*}
and
\begin{align*}
(\id \ot \epsl) \del(\bullet_{x})&\ = (\id \ot \epsl)(1 \ot \bullet_{x}+ \bullet_{x} \ot 1) =\bullet_{x} \ot 1_\bfk =\beta_{r}(\bullet_x).
\end{align*}
Hence $\bullet_{x} \in \mathscr{D}$. For any $F \in \mathscr{D}$ and $\omega \in \Omega$, we have
\begin{align*}
&\ (\epsl \ot \id) \del(B_{\omega}^{+}(F))\\
=&\  (\epsl \ot \id)(B_{\omega}^{+}(F) \ot 1+ (\id \ot B_{\omega}^{+})\del(F)) \quad \text{(by Lemma~(\ref{lem:3.8}))}\\
=&\  \epsl(B_{\omega}^{+}(F)) \ot 1+ (\epsl \ot \id)(\id \ot B_{\omega}^{+}) \del(F)\\
=&\  \epsl(B_{\omega}^{+}(F)) \ot 1+ (\id \ot B_{\omega}^{+})(\epsl \ot \id) \del(F)\\
=&\ 0+ (\id \ot B_{\omega}^{+}) \beta_{l}(F) \quad \text{(by Eq.~(\mref{eq:ep1}) and $F \in \mathscr{D}$ )}\\
=&\ 1_\bfk \ot B_{\omega}^{+}(F)
= \beta_{l}(B_{\omega}^{+}(F))
\end{align*}
and
\begin{align*}
&\ (\id \ot\epsl) \del(B_{\omega}^{+}(F))\\
=&\  (\id \ot\epsl)(B_{\omega}^{+}(F) \ot 1+ (\id \ot B_{\omega}^{+})\del(F)) \quad \text{(by Lemma~(\ref{lem:3.8}))}\\
=&\  B_{\omega}^{+}(F) \ot 1_\bfk+ (\id \ot\epsl)(\id \ot B_{\omega}^{+}) \del(F)\\
=&\ B_{\omega}^{+}(F) \ot 1_\bfk + \sum_{(F)} F_{(1)}\ot \epsl (B_{\omega}^{+}(F_{(2)}))\\
=&\ B_{\omega}^{+}(F) \ot 1_\bfk+ 0 \quad \text{(by Eq.~(\mref{eq:ep1}) and $F \in \mathscr{D}$ )}\\
=&\ \beta_{r}(B_{\omega}^{+}(F)).
\end{align*}
Thus $B_{\omega}^{+}(F) \in \mathscr{D}$ and $\mathscr{D}$ is stable under $B_{\omega}^{+}$ for any $\omega \in \Omega$. Consequently, $\mathscr{D}=\ncshal(X, \Omega)$ and $\epsl$ is a  counit. This completes the proof.
\end{proof}

We now introduce the connectedness condition on coalgebras~\mcite{GG19}.

\begin{defn} \mlabel{def:fil}
\begin{enumerate}
\item  A coalgebra $(C, \Delta, \vep)$ is called {\bf coaugmented} if there is a linear map $u: \bfk \rightarrow C$, called the {\bf coaugmentation}, such that $\vep u=\id_\bfk$.
\item
A coaugmented coalgebra $(C,u,\Delta,\vep)$ is called {\bf cofiltered} if there is an exhaustive increasing filtration $\{C_{(n)}\}_{n\geq 0}$ of $H$ such that
\begin{equation}
\im\,u\subseteq C_{(n)}, \quad \Delta(C_{(n)})\subseteq\sum\limits^{}_{p+q=n}C_{(p)}\otimes C_{(q)}, \quad n\geq0,  p,q\geq0
\mlabel{eq:cofil}
\end{equation}
Elements in $C_{(n)}\setminus C_{(n-1)}$ are said to have degree $n$. $C$ is called {\bf connected (filtered)} if in addition $C^{0}=\mathrm{im}\, u~(={\bfk})$.
\mlabel{def:b}
\end{enumerate}
\end{defn}

By the coaugmented condition, we have $C=\im\, u \oplus \ker \vep$. Then from $\im\,u \subseteq C_{(n)}$ and modularity, we have $C_{(n)}=\im\, u\oplus (C_{(n)} \cap \ker \vep)$, as originally stated in~\mcite{GG19}.

We quote the following condition~\cite[Theorem~3.4]{GG19} for Hopf algebras. See also~\mcite{LR06,Man01,Mon93}.

\begin{lemma}\label{lem:filterhopf}
Let $H=(H,m,u,\Delta,\vep)$ be a bialgebra such that $(H, \Delta,\vep)$ is a connected cofiltered coaugumented coalgebra. Then $H$ is a Hopf algebra.
\end{lemma}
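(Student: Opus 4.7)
The plan is to construct an antipode $S$ on $H$ recursively along the cofiltration and then verify that it is a two-sided convolution inverse of $\id_H$. First, since $H$ is coaugmented, there is a decomposition $H = \im u \oplus \ker\vep$, and by the connected condition $H_{(0)} = \im u$. For $h \in H_{(n)} \cap \ker\vep$ with $n \geq 1$, the counit axioms combined with $\Delta(H_{(n)}) \subseteq \sum_{p+q=n} H_{(p)} \ot H_{(q)}$ force the reduced coproduct $\tilde\Delta(h) := \Delta(h) - h \ot 1_H - 1_H \ot h$ to lie in $\bigoplus_{p+q=n,\,p,q\geq 1}(H_{(p)} \cap \ker\vep) \ot (H_{(q)} \cap \ker\vep)$, so every tensor factor has strictly smaller filtration degree than $n$. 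This is the key observation that will power the induction.

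Next, I would set $S(1_H) := 1_H$ and, by induction on the cofiltration degree $n$, define
\begin{equation*}
S(h) := -h - \sum_{(h)} S(\tilde h_{(1)})\,\tilde h_{(2)} \quad \text{for } h \in H_{(n)} \cap \ker\vep,
\end{equation*}
where $\tilde\Delta(h) = \sum \tilde h_{(1)} \ot \tilde h_{(2)}$ in Sweedler notation. The induction hypothesis supplies $S(\tilde h_{(1)})$ because $\tilde h_{(1)}$ has cofiltration degree strictly below $n$, so the recursion is well-founded. By construction the identity $m(S \ot \id)\Delta(h) = u\vep(h) = 0$ holds on $\ker\vep$, and it is trivial on $\im u$; hence $S * \id_H = u\vep$ in the convolution monoid $(\End_\bfk(H), *, u\vep)$.

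The remaining step, and the main technical subtlety, is to verify the symmetric identity $\id_H * S = u\vep$. I would introduce an auxiliary map $S'$ defined by the mirrored recursion
\begin{equation*}
S'(h) := -h - \sum_{(h)} \tilde h_{(1)}\,S'(\tilde h_{(2)}),
\end{equation*}
so that $\id_H * S' = u\vep$ by the same inductive argument. Since $*$ is associative with unit $u\vep$, any element of $\End_\bfk(H)$ that admits both a left and a right convolution inverse must have them agree:
\begin{equation*}
S = S * u\vep = S * (\id_H * S') = (S * \id_H) * S' = u\vep * S' = S'.
\end{equation*}
Hence $S$ is a genuine antipode and $H$ is a Hopf algebra. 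The only delicate bookkeeping lies in verifying that the cofiltration assumption of Eq.~(\mref{eq:cofil}) really does pass to the reduced coproduct $\tilde\Delta$ at every step; once this containment is in hand, the remainder of the argument is a formal manipulation in the convolution algebra.
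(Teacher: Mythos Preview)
Your argument is correct and is the standard Takeuchi-style construction of the antipode by recursion on a connected (co)filtration. Note, however, that the paper does not supply its own proof of this lemma: it is quoted verbatim from \cite[Theorem~3.4]{GG19}, with further pointers to \cite{LR06,Man01,Mon93}. So there is no in-paper proof to compare against; what you have written is precisely the argument one finds in those references. One cosmetic remark: the displayed $\bigoplus_{p+q=n,\,p,q\geq 1}$ should be a sum $\sum$, since the filtration pieces $H_{(p)}$ are nested rather than independent; this does not affect the argument.
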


Finally, we arrive at our main result on Hopf algebraic structure on free \mrbas.
\begin{theorem}
The sextuple $(\ncshal(X, \Omega),\, \dl,\, \uul,\, \del,\, \epsl, \, \Bo)$ is an $\Omega$-cocycle Hopf algebra.
\mlabel{thm:cohopf}
\end{theorem}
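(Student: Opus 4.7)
The plan is to reduce Theorem~\mref{thm:cohopf} to Theorem~\mref{them:cocbia} combined with Lemma~\mref{lem:filterhopf}. By Theorem~\mref{them:cocbia}, the sextuple $(\ncshal(X, \Omega),\, \dl,\, \uul,\, \del,\, \epsl, \, \Bo)$ is already an $\Omega$-cocycle bialgebra, so by the very definition of an $\Omega$-cocycle Hopf algebra (Definition~\mref{de:decHopf}), it suffices to prove that the underlying bialgebra $(\ncshal(X,\Omega),\, \dl,\, \uul,\, \del,\, \epsl)$ is a Hopf algebra. By Lemma~\mref{lem:filterhopf}, this reduces to endowing $(\ncshal(X,\Omega),\, \uul,\, \del,\, \epsl)$ with a connected cofiltered coaugmented coalgebra structure.

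The natural candidate filtration is $\ncshal(X,\Omega)_{(n)} := \bfk\frakL_{(n)}$, obtained by restricting the degree (vertex count) filtration on $\bfk\calf(X,\Omega)$ to the submodule $\bfk\frakL_\infty$ as in Eq.~(\mref{eq:fdegfil}); exhaustiveness follows from $\frakL_\infty = \bigcup_{n \geq 0} \frakL_{(n)}$. Since $\frakL_{(0)} = \calf_{(0)} \cap \frakL_\infty = \{1\}$, we obtain $\ncshal(X,\Omega)_{(0)} = \bfk\cdot 1 = \im\,\uul$, yielding both connectedness and compatibility with the coaugmentation $\uul$.

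The heart of the proof is the cofiltration condition
\[
\del\bigl(\ncshal(X,\Omega)_{(n)}\bigr) \subseteq \sum_{p+q=n} \ncshal(X,\Omega)_{(p)} \otimes \ncshal(X,\Omega)_{(q)}.
\]
Unwinding the definition $\del = (\psi \otimes \psi)\,\col\, i$ from Eq.~(\mref{eq:lcoproduct}), this follows by tracking degree preservation at each stage: the natural inclusion $i: \ncshal(X,\Omega) \hookrightarrow \bfk\calf(X,\Omega)$ trivially respects the filtration; by Theorem~\mref{thm:hopfx}, the bialgebra $(\hrts,\,\col)$ is graded by vertex count, so $\col$ respects the associated degree filtration; and $\psi$ satisfies $\psi(\bfk\calf_{(n)}) \subseteq \bfk\frakL_{(n)}$ for every $n \geq 0$.

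The last assertion is the main bookkeeping step and is, I expect, the principal obstacle. It follows from the commutative diagram in Eq.~(\mref{eq:freediag}), which gives $\psi = \theta\,\varphi\,\theta^{-1}$: the isomorphism $\theta$ matches the respective degree filtrations $\frakM_{(n)} \leftrightarrow \calf_{(n)}$ and $\frakX_{(n)} \leftrightarrow \frakL_{(n)}$ (compare Eqs.~(\mref{eq:rbdegfil}) and~(\mref{eq:fdegfil})), while $\varphi$ preserves the total-degree filtration by Eq.~(\mref{eq:wfilhom}) in Proposition~\mref{pp:wfil}. Composing these three degree-filtered maps yields the claim, and Lemma~\mref{lem:filterhopf} then delivers the desired Hopf algebra structure, completing the proof.
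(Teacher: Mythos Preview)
Your proposal is correct and follows essentially the same approach as the paper's proof: reduce to Theorem~\mref{them:cocbia} and Lemma~\mref{lem:filterhopf}, take the filtration $\bfk\frakL_{(n)}$, verify connectedness from $\frakL_{(0)}=\{1\}$, and check the cofiltration condition by factoring $\del=(\psi\otimes\psi)\col i$ and tracking each factor through the degree filtration. The only difference is cosmetic: where the paper simply invokes Proposition~\mref{prop:alge} (which already records that $\psi$ is a homomorphism of $\Omega$-operated filtered algebras), you unpack this via $\psi=\theta\varphi\theta^{-1}$ and Proposition~\mref{pp:wfil}, which is exactly how Proposition~\mref{prop:alge} was obtained in the first place.
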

\begin{proof}
By Theorem \mref{them:cocbia}, The quintuple $(\ncshal(X, \Omega),\, \dl,\, \uul,\, \del,\, \epsl, \, \Bo)$ is an $\Omega$-cocycle bialgebra. In particular, $(\ncshal(X, \Omega),\, \uul,\, \del,\, \epsl)$ is a coaugumented coalgebra.
To apply Lemma~\mref{lem:filterhopf}, we just need to check that $(\ncshal(X, \Omega),\, \uul,\, \del,\, \epsl)$ is connected and cofiltered, with respect to the filtration $\bfk \calf_{(n)}, n\geq 0$. The connectedness is clear since $\calf_{(0)}=1$. Further by Proposition~\mref{prop:alge} and Eq.~\eqref{eq:lcoproduct}, we have
\begin{align*}
\del(\frakL_n) = (\psi \ot \psi) (\del (\frakL_n)) \subseteq (\psi \ot \psi) \Big (\sum_{p+q=n} (\bfk \calf_{(p)})\ot (\bfk \calf_{(q)})\Big)
\subseteq \sum_{p+q=n} (\bfk\frakL_{(p)}) \ot (\bfk \frakL_{(q)}).
\end{align*}
Now the conclusion follows from Lemma~\mref{lem:filterhopf}.
\end{proof}

\noindent {\bf Acknowledgments}: This work was supported by the National Natural Science Foundation of China (Grant No.\@ 11771190, 11771191). 

\end{document}